\definecolor{labelkey}{rgb}{0,0,1}
\newtheorem{proposition}{Proposition}[section]
\newtheorem{theorem}[proposition]{Theorem}
\newtheorem{corollary}[proposition]{Corollary}
\newtheorem{lemma}[proposition]{Lemma}
\newtheorem*{bootstrap*}{Bootstrap Step}
\theoremstyle{definition}
\newtheorem{definition}[proposition]{Definition}
\newtheorem{remark}[proposition]{Remark}
\newtheorem{example}[proposition]{Example}
\newtheorem{assumption}[proposition]{Assumption}
\numberwithin{equation}{section}
\newcommand\e{{\rm e}}
\newcommand\dd{\,{\rm d}}
\newcommand\ddt{{\frac{\dd}{\dd t}}}
\def\Re{{\rm Re}}
\def\Im{{\rm Im}}
\def\l {\langle}
\def\r {\rangle}
\newcommand\de{{\partial}}
\newcommand{\EE}{\mathcal{E}}
\newcommand{\OO}{\mathcal{O}}
\newcommand{\dist}{\mathrm{dist}}
\newcommand{\sign}{\mathrm{sign}}
\newcommand{\supp}{\mathrm{supp}}
\newcommand{\1}{\mathbf{1}}
\newcommand{\Pe}{\mathrm{Pe}}
\newcommand{\NN}{\mathbb{N}}
\newcommand{\TT}{\mathbb{T}}
\newcommand{\RR}{\mathbb{R}}
\newcommand{\CC}{\mathbb{C}}
\renewcommand{\SS}{\mathbb{S}}
\newcommand\cI{{\mathcal I}}
\newcommand\cN{{\mathcal N}}
\newcommand\cV{{\mathcal V}}
\title[Enhanced dissipation and Taylor dispersion]{Enhanced dissipation 
and Taylor dispersion \\ in higher-dimensional parallel shear flows}
\author[M.\ Coti Zelati]{Michele Coti Zelati}
\address{Department of Mathematics, Imperial College London, London, SW7 2AZ, UK}
\email{m.coti-zelati@imperial.ac.uk}
\author[Th.\ Gallay]{Thierry Gallay}
\address{Institut Fourier, Universit\'e Grenoble Alpes, CNRS, 38610 Gi\`eres, France}
\email{Thierry.Gallay@univ-grenoble-alpes.fr}
\subjclass[2020]{35Q35, 35H10, 47B44, 76E05}
\keywords{Enhanced dissipation, Taylor dispersion, shear flows, resolvent
estimates, hypocoercivity.}
\begin{document}

\begin{abstract}
We consider the evolution of a passive scalar advected by a parallel 
shear flow in an infinite cylinder with bounded cross section, in 
arbitrary space dimension. The essential parameters of the problem are 
the molecular diffusivity $\nu$, which is assumed to be small, and 
the wave number $k$ in the streamwise direction, which can take 
arbitrary values. Under generic assumptions on the shear velocity $v$, 
we obtain optimal decay estimates for large times, both in the enhanced
dissipation regime $\nu \ll |k|$ and in the Taylor dispersion regime
$|k| \ll \nu$. Our results can be deduced from resolvent estimates using
a quantitative version of the Gearhart-Pr\"uss theorem, or can be established
more directly via the hypocoercivity method. Both approaches are explored
in the present example, and their relative efficiency is compared. 
\end{abstract}


\maketitle

\section{Introduction}\label{sec1}

The evolution of a passive scalar advected by a parallel shear flow and
undergoing molecular diffusion is an idealized problem which plays an important
role in hydrodynamic stability theory. This is perhaps the simplest model
demonstrating how advection by an incompressible flow, which has no dissipative
effect by itself, can strengthen the action of diffusion and lead to energy
dissipation at a much faster rate. The relative importance of advection and
diffusion is measured by the P\'eclet number, which is inversely proportional to
the molecular diffusion coefficient $\nu$. We are interested in the regime of
large P\'eclet numbers, where two different phenomena can occur depending on the
streamwise wavenumber $k$. If $\Pe^{-1} \ll |k|L$, where $L$ is a characteristic
length of the domain, the lifetime of the Fourier mode with wavenumber $k$ is
not proportional to $\Pe$, as in usual diffusion, but typically to $\Pe^{1/3}$
or $\Pe^{1/2}$ depending on the shear velocity. This phenomenon is usually
called {\em accelerated diffusion} or {\em enhanced dissipation}
\cites{BBG,CMV,RY}. In contrast the Fourier modes corresponding to $|k|L \ll
\Pe^{-1} \ll 1$ evolve diffusively, with an effective diffusion coefficient that
is proportional to $\Pe$ and therefore inversely proportional to the molecular
viscosity $\nu$. This effect, which is only observed in very long or infinite
cylinders, is called {\em Taylor dispersion} or sometimes {\em Taylor-Aris
  dispersion} \cites{Ta1,Ta2,Ar,YJ}.

From a mathematical point of view, numerous results describing the enhancement
of diffusion due to advection by a divergence-free vector field were obtained
both in the deterministic and in the stochastic setting, see
\cites{BBPS,CKRZ,CCZW,CZDE,CZD,FP1,FP2,MK,PS} and the references therein. For
the specific case of a parallel flow in a two-dimensional strip, the enhanced
dissipation effect for a passive scalar was thoroughly studied in
\cites{ABN,BCZ,Wei}, and the estimates derived on that model also play a crucial
role in the stability analysis of the shear flow as a stationary solution of the
Navier-Stokes equations, see \cites{BW,BVW,IMM,WZZ,GNRS,CZEW}. The corresponding
problem in higher-dimensional cylinders did not attract much attention so far,
except in particular examples such as the plane Couette flow \cite{BGM} and the
pipe Poiseuille flow \cite{CWZ}. On the other hand, a rigorous justification of
Taylor dispersion using self-similar variables and center manifold theory was
achieved in \cite{BCW}, see also \cite{MR2}. It is worth mentioning that,
although the enhanced dissipation and the Taylor dispersion have the same
physical origin, the mathematical techniques used in \cite{BCW} and \cite{BCZ}
are completely different, and rely on distinct assumptions on the shear
velocity.

In the present paper, we reopen the study of a passive scalar advected by a
parallel flow with a double goal: we aim at investigating the {\em
  higher-dimensional case}, which has received less attention so far, with an
approach that covers {\em in a unified way} the enhanced dissipation and the
Taylor dispersion regimes. To state our results, we introduce some notation. Let
$\Omega \subset \RR^d$ be a smooth bounded domain, and $v : \overline{\Omega}
\to \RR$ be a smooth function. We consider the evolution of a passive scalar in
the infinite cylinder $\Sigma = \RR \times \Omega \subset \RR^{d+1}$ under the
action of the shear velocity $u(x,y) = (v(y),0)^T$. The density $f(x,y,t)$ of
the passive scalar satisfies the advection-diffusion equation
\begin{equation}\label{eq:feq}
  \partial_t f(x,y,t) + v(y)\partial_x f(x,y,t) \,=\, \nu 
  \Delta f(x,y,t)\,, \quad (x,y) \in \Sigma\,, \quad t > 0\,,
\end{equation}
where $\nu > 0$ is the molecular diffusion coefficient and $\Delta = \partial_x^2 
+ \Delta_y$ denotes the Laplace operator acting on all variables $(x,y) \in \Sigma$.
We supplement \eqref{eq:feq} with homogeneous Neumann conditions at the boundary
$\partial \Sigma = \RR \times \partial\Omega$.  Applying a Galilean
transformation if needed, we can assume without loss of generality that the
shear velocity $v$ has zero average over $\Omega$. If $L$ denotes the diameter
of $\Omega$ and $U$ is the maximum of $|v|$ on $\overline{\Omega}$, the P\'eclet
number is defined as
\[
  \Pe \,=\, \frac{UL}{\nu}\,.
\]
We are interested in the long-time behavior of the solutions of \eqref{eq:feq} 
in the regime where $\Pe \gg 1$. It is convenient to introduce dimensionless
variables defined by 
\[
  \tilde x \,=\, \frac{x}{L}\,, \qquad  \tilde y \,=\, \frac{y}{L}\,, \qquad
  \tilde t \,=\, \frac{Ut}{L}\,, \qquad  \tilde v \,=\, \frac{v}{U}\,.  
\]
Dropping all tildes for notational simplicity, we arrive at the same equation
\eqref{eq:feq} where $L = U = 1$ and $\nu = \Pe^{-1}$ is now a dimensionless
parameter.  

Since equation~\eqref{eq:feq} is invariant under translations in the horizontal
direction, it is useful to consider the (partial) Fourier transform formally 
defined by
\begin{equation}\label{eq:fourier}
  \hat f(k,y,t) \,=\, \int_\RR f(x,y,t)\,\e^{-ikx}\dd x\,, 
  \quad k \in \RR\,, \quad y \in \Omega\,, \quad t > 0\,.
\end{equation}
This quantity satisfies the evolution equation
\begin{equation}\label{eq:feq2}
  \partial_t \hat f(k,y,t) + ikv(y) \hat f(k,y,t) \,=\, \nu 
  \bigl(-k^2 + \Delta_y\bigr) \hat f(k,y,t)\,, \quad y \in \Omega\,, 
  \quad t > 0\,,
\end{equation}
where the horizontal wavenumber $k \in \RR$ is now a parameter. The horizontal
diffusion $-\nu k^2$ in \eqref{eq:feq2} plays only a minor role in the regime 
we consider, and can be conveniently eliminated by the change of dependent
variables
\begin{equation}\label{eq:gdef}
  \hat f(k,y,t) \,=\, \e^{-\nu k^2 t} g(k,y,t)\,, \quad k \in \RR\,, 
  \quad y \in \Omega\,, \quad t > 0\,.
\end{equation}
This leads to the ``hypoelliptic'' evolution equation
\begin{equation}\label{eq:geq}
  \partial_t g(k,y,t) + ikv(y) g(k,y,t) \,=\, \nu \Delta_y g(k,y,t)\,, 
  \quad y \in \Omega\,, \quad t > 0\,,
\end{equation}
which is the starting point of our analysis. As already mentioned, we suppose that
$g$ satisfies the homogeneous Neumann conditions at the boundary, but our results
still hold, with a similar proof, if we assume instead that $g = 0$ on $\partial
\Omega$. We also suppose that the horizontal wave number $k$ is nonzero,
otherwise \eqref{eq:geq} reduces to the usual heat equation in $\Omega$.
It is not difficult to verify that, for all initial data $g_0 \in L^2(\Omega)$,
equation~\eqref{eq:geq} has a unique global solution $t \mapsto g(k,t) \in
C^0([0,+\infty),L^2(\Omega))$ such that $g(k,0) = g_0$. Here and in the sequel 
$g(k,t)$ is a shorthand notation for the map $y \mapsto g(k,y,t) \in L^2(\Omega)$.
Our goal is to estimate the decay rate of the solutions of \eqref{eq:geq}
as $t \to +\infty$.

We first consider the situation where the cross section $\Omega$ is 
one-dimensional. Our main result in this case can be stated as follows. 

\begin{theorem}\label{thm:main1}
Assume that $d = 1$, $\Omega = (0,L)$, and that $v : [0,L] \to \RR$ is a $C^m$ 
function, for some $m \in \NN^*$, whose derivatives up to order $m$ do not vanish 
simultaneously:
\begin{equation}\label{eq:vder}
  |v'(y)| + |v''(y)| + \dots + |v^{(m)}(y)| \,>\, 0\,, \quad 
  \hbox{for all } y \in [0,L]\,. 
\end{equation}
Then there exist positive constants $C_1, C_2$ such that, for all $\nu > 0$,
all $k \neq 0$, and all initial data $g_0 \in L^2(\Omega)$, the solution 
of \eqref{eq:geq} satisfies, for all $t \ge 0$, 
\begin{equation}\label{lamnuk}
  \|g(k,t)\|_{L^2(\Omega)} \,\le\, C_1\,\e^{-C_2 \lambda_{\nu,k}t}\,\|g_0\|_{L^2(\Omega)}\,, 
  \quad\hbox{where }~ \lambda_{\nu,k} \,=\,
  \begin{cases} \nu^{\frac{m}{m+2}} |k|^{\frac{2}{m+2}} & \hbox{if }~ 
  0 < \nu \le |k|\,,\\ \frac{k^2}{\nu} & \hbox{if }~ 0 < |k| \le \nu\,.
  \end{cases}
\end{equation}
\end{theorem}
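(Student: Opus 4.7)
The plan is to deduce Theorem~\ref{thm:main1} from uniform resolvent estimates for the shifted operator
\[
  H_{\nu,k} \,:=\, -\nu\,\de_y^2 + ik\,v(y)
\]
acting on $L^2(0,L)$ with homogeneous Neumann boundary conditions. Since $H_{\nu,k}$ is maximal accretive (its antisymmetric part $ikv$ is a bounded perturbation of the dissipative self-adjoint part $-\nu\de_y^2$), the semigroup $\e^{-tH_{\nu,k}}$ coincides with the solution operator of \eqref{eq:geq}. By the quantitative Gearhart--Pr\"uss theorem, it then suffices to establish
\[
  \sup_{\eta\in\RR}\bigl\|(H_{\nu,k}-i\eta)^{-1}\bigr\|_{L^2\to L^2} \,\le\, \frac{C}{\lambda_{\nu,k}}\,.
\]
The two regimes in \eqref{lamnuk} will be handled by completely different arguments.

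In the Taylor dispersion regime $0 < |k|\le\nu$, the operator $H_{\nu,k}$ is a small skew-adjoint perturbation of $-\nu\de_y^2$, whose kernel consists of the constant functions. I would split $g=\bar g+g_\perp$ with $\bar g=L^{-1}\int_0^L g\dd y$. On the orthogonal complement of the constants the Neumann Laplacian is coercive with spectral gap $\nu\pi^2/L^2$, so for the equation $(H_{\nu,k}-i\eta)g=F$ one can solve for $g_\perp$ in terms of $\bar g$ and $F$ by a convergent Neumann series in the small parameter $|k|/\nu$, obtaining to leading order $g_\perp = -(ik/\nu)(-\de_y^2)^{-1}P_\perp(v\,\bar g) + \cdots$. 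Substituting back into the projection onto constants produces an effective scalar equation
\[
  \Bigl(-i\eta + \tfrac{k^2}{\nu}\,D + \cdots\Bigr)\bar g \,=\, \bar F + \cdots\,,
\]
where $D = L^{-1}\int_0^L v\,(-\de_y^2)^{-1}P_\perp v\dd y > 0$ is the Taylor diffusivity; positivity of $D$ follows from $v\not\equiv 0$ together with $\bar v=0$. The scalar factor is thus bounded below by $c\,k^2/\nu$ uniformly in $\eta\in\RR$, which yields the resolvent bound $\|g\|\le C(\nu/k^2)\|F\|$ after reconstructing $g_\perp$.

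In the enhanced dissipation regime $0 < \nu\le|k|$, I would argue directly on the equation $(H_{\nu,k}-i\eta)g=F$. Testing against $g$ and separating real and imaginary parts gives
\[
  \nu\,\|g'\|^2 \,\le\, \|F\|\,\|g\|\,,\qquad \Bigl|\int_0^L\bigl(kv(y)-\eta\bigr)|g(y)|^2\dd y\Bigr| \,\le\, \|F\|\,\|g\|\,.
\]
If $\eta/k$ lies outside $v([0,L])$, the second inequality alone gives $\|g\|\lesssim|k|^{-1}\|F\|$, which is more than enough. Otherwise, set $\phi(y):=v(y)-\eta/k$ and consider the sublevel set $E_\delta:=\{|\phi|\le\delta\}$. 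The non-degeneracy assumption \eqref{eq:vder} implies a van~der~Corput type bound $|E_\delta|\le C_m\,\delta^{1/m}$, with each connected component of diameter at most $C_m\,\delta^{1/m}$. A Poincar\'e plus cut-off estimate on these components yields
\[
  \|g\|_{L^2(E_\delta)}^2 \,\le\, C_m\,\delta^{2/m}\,\|g'\|^2 \,+\, \frac{C}{\delta}\int_{E_\delta}|\phi|\,|g|^2\dd y\,,
\]
while on the complement $|\phi|\ge\delta$ gives $\|g\|_{L^2([0,L]\setminus E_\delta)}^2\le(\delta|k|)^{-1}\|F\|\,\|g\|$. Combining these estimates with $\nu\|g'\|^2\le\|F\|\|g\|$ and optimizing over $\delta$ (the natural choice $\delta\sim(\nu/|k|)^{m/(m+2)}$ balances the diffusive and phase-mixing contributions) produces the bound $\|g\|\le C\,\nu^{-m/(m+2)}|k|^{-2/(m+2)}\|F\|$.

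The main obstacle is the uniformity in $\eta$ of the enhanced-dissipation resolvent bound, since the geometry of the level sets of $\phi = v - \eta/k$ depends delicately on $\eta$: $\phi$ may develop several nearly-degenerate zeros at which its derivatives up to order $m-1$ vanish simultaneously. A robust way to handle this is to use \eqref{eq:vder} together with compactness to cover $[0,L]$ by finitely many subintervals on each of which a single derivative $v^{(j)}$, $1\le j\le m$, is uniformly bounded below, and to run the van~der~Corput/localization argument on each piece; the global exponent is then governed by the worst order $m$. As a cross-check, one can re-derive the same rate via the hypocoercivity method using a modified functional of the form $\Phi(g) = \|g\|^2 + \sum_{j=1}^{m}\alpha_j(\nu,k)\,\Re\l g,\,W_j g\r$ built from iterated commutators of $\de_y$ with $ikv$; this avoids the Fourier-side input of Gearhart--Pr\"uss at the price of more algebraic bookkeeping, but yields the same exponent in $\lambda_{\nu,k}$.
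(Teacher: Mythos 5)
Your high-level strategy --- reducing to resolvent bounds via Gearhart--Pr\"uss, writing the spectral parameter as $\eta=k\lambda$, and combining a one-dimensional van der Corput bound on the sublevel sets of $\phi=v-\lambda$ with a localization estimate, then optimizing over $\delta$ --- is exactly the paper's route in Section~\ref{sec2}, and your choice $\delta\sim(\nu/|k|)^{m/(m+2)}$ is the correct one. However, there is a genuine gap in the enhanced-dissipation step. Testing $(H_{\nu,k}-i\eta)g=F$ against $g$ and taking the imaginary part only controls the \emph{signed} quantity $\bigl|\int_0^L(kv(y)-\eta)|g(y)|^2\dd y\bigr|$. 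This does \emph{not} control $\int_0^L|kv-\eta|\,|g|^2\dd y$: the integrand changes sign across each zero of $\phi$, and the positive and negative contributions can cancel (the problem is most visible when $v$ is monotone with a single zero of $\phi$ in $(0,L)$ and $g$ is split evenly between the two sides). Consequently the complement bound $\|g\|_{L^2([0,L]\setminus E_\delta)}^2\le(\delta|k|)^{-1}\|F\|\,\|g\|$ does not follow as stated, and the absorption step does not close. The paper resolves this by testing against $\chi g$, where $\chi$ is a Lipschitz regularization of $\sign(v-\lambda)$ with $\|\chi'\|_{L^\infty}\le 1/\delta$, designed so that $\chi\,(v-\lambda)\ge 0$ pointwise and $\chi\,(v-\lambda)\ge\delta^m$ on $\Omega\setminus\EE_{\lambda,\delta}^m$; then $\Im\langle Hg,\chi g\rangle$ produces a \emph{one-signed} quantity $k\langle\chi(v-\lambda)g,g\rangle$, at the cost of a benign commutator $\nu\,\Im\langle g',g\chi'\rangle$ absorbed by \eqref{eq:Hreal}. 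Without such a sign-correcting multiplier (or an equivalent device) your argument is incomplete. I also note that, as written, your inequality $\|g\|_{L^2(E_\delta)}^2 \le C_m\delta^{2/m}\|g'\|^2 + \frac{C}{\delta}\int_{E_\delta}|\phi||g|^2$ is trivially true if $C\ge 1$, since $|\phi|\le\delta$ on $E_\delta$; the useful Poincar\'e-type bound you actually need on the sublevel set is rather $\|g\|_{L^2(E_\delta)}^2\le \tfrac12\|g\|^2 + C\delta^{2/m}\|g'\|^2$, which in $d=1$ follows from the Agmon inequality $\|g\|_{L^\infty}^2\le L^{-1}\|g\|_{L^2}^2+2\|g\|_{L^2}\|g'\|_{L^2}$ combined with $|E_\delta|\lesssim\delta^{1/m}$, exactly as in the paper.

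Your Taylor-dispersion argument is genuinely different from the paper's. You project onto constants, invert the Neumann Laplacian on the complement by a Neumann series in $|k|/\nu$, and extract the effective Taylor diffusivity $D>0$; this is the center-manifold/homogenization picture of \cite{BCW}. The paper instead reruns the very same multiplier argument with a \emph{fixed} $\delta=\delta_0$ and reads off the rate $k^2/\nu$ directly from the resulting inequality; this works under the much weaker hypothesis that $v$ is merely continuous and not identically constant (Theorem~\ref{thm:main3}, via Lemma~\ref{lem:fixed}). Your perturbative inversion only converges for $|k|/\nu$ small --- not up to $|k|=\nu$ --- so you would additionally have to shrink the Taylor regime and bridge the intermediate range $c\nu\le|k|\le\nu$ (the paper's Section~\ref{sec3} does something analogous via the threshold $\beta_0$, but the resolvent proof of Section~\ref{sec2} covers the whole range $|k|\le\nu$ without any gap). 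Finally, your hypocoercivity cross-check with $y$-dependent weights for general $m$ is indeed the route of \cite{BCZ}; the paper's Section~\ref{sec3} implements the constant-coefficient version and therefore restricts to $m\le 2$.
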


The main novel feature of Theorem~\ref{thm:main1} is to exhibit a decay rate
$\lambda_{\nu,k}$ which undergoes a {\em continuous transition} from the
enhanced dissipation regime $\nu \ll |k|$ to the Taylor dispersion regime
$|k| \ll \nu$. In previous mathematical works, both situations were studied
using different methods, making the comparison more difficult.  When
$\nu \le |k|$, the expression \eqref{lamnuk} of $\lambda_{\nu,k}$ is certainly
not new: it was obtained in \cite{BCZ}, up to a logarithmic correction 
of purely technical origin, and it can also be deduced from the general 
criteria given in \cite{Wei}, with some additional work. This
instructive formula shows that the long-time behavior of the solutions of
\eqref{eq:geq} is determined, in the enhanced dissipation regime, by the
degree of {\em degeneracy of the critical points} of the shear function $v$. In the
most common cases,  the shear flows under consideration are either monotone ($m = 1$)
or have nondegenerate critical points ($m = 2$). Accordingly, the lifetime 
$1/\lambda_{\nu,k}$ of the Fourier mode indexed by $k$ is proportional to 
$\nu^{-1/3}$ or $\nu^{-1/2}$ when $\nu \ll 1$, and is therefore much shorter 
than the diffusive time scale $\nu^{-1}$. In the Taylor dispersion regime 
$|k| \le \nu$, the decay rate $\lambda_{\nu,k} = k^2/\nu$ has the same 
dependence upon the Fourier parameter $k$ as the purely diffusive rate 
$\nu k^2$, but $\lambda_{\nu,k} \gg \nu k^2$ when $\nu \ll 1$. So, in all 
cases, the expression \eqref{lamnuk} of $\lambda_{\nu,k}$ reveals the strong 
influence of the advection term on the solutions of \eqref{eq:geq} when the 
P\'eclet number $\nu^{-1}$ is sufficiently large. 

In the higher-dimensional case $d \ge 2$, the situation is similar and we still
expect that the decay rate of the solutions of \eqref{eq:geq} is determined,
when $\nu \ll |k|$, by the degree of degeneracy of the critical points of $v$.
This is more difficult to prove, however, because the behavior of a function
near its critical points can take more diverse forms in higher dimensions. To
limit the complexity, we assume here that $v$ is a {\em Morse function}, which
means that $v$ has only a finite number of critical points in $\Omega$, all of
which are nondegenerate. For simplicity, we also suppose that $v$ has no
critical point on the boundary $\partial\Omega$, although this additional
restriction could be dispensed with. Our second main result is:

\begin{theorem}\label{thm:main2}
Assume that $v : \overline{\Omega} \to \RR$ is a smooth Morse function with 
no critical point on the boundary $\partial\Omega$. There exist positive 
constants $C_1, C_2$ such that, for all $\nu > 0$, all $k \neq 0$, and all 
initial data $g_0 \in L^2(\Omega)$, the solution of \eqref{eq:geq} satisfies
estimate \eqref{lamnuk} for all $t \ge 0$, where $m = 1$ if $v$ has no 
critical point in $\Omega$ and $m = 2$ if $v$ has at least one critical
point in $\Omega$. 
\end{theorem}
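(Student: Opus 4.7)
The plan is to follow the same two-step approach used for Theorem~\ref{thm:main1}: establish the pseudospectral resolvent bound
\[
  \sup_{\mu\in\RR}\,\bigl\|(L_{\nu,k}-i\mu)^{-1}\bigr\|_{L^2(\Omega)\to L^2(\Omega)} \,\le\, \frac{C}{\lambda_{\nu,k}}\,,
\]
where $L_{\nu,k}=-\nu\Delta_y+ikv$ is the generator of the semigroup associated with \eqref{eq:geq}, and then invoke the quantitative Gearhart-Pr\"uss theorem to convert this into the decay estimate \eqref{lamnuk}. Alternatively one could argue directly via a hypocoercive modified energy, but I will phrase things in terms of resolvents since this parallels the 1D presentation most closely. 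The Taylor dispersion regime $|k|\le\nu$ is essentially dimension-insensitive: decomposing on the Neumann eigenbasis of $-\Delta_y$ on $\Omega$, isolating the zero mode (which is now coupled to the nonzero modes through $ikv$), and treating the coupling as a perturbation of the spectral gap $\nu\lambda_1$ gives the rate $k^2/\nu$ by exactly the same computation as in one dimension. All the new content thus lies in the enhanced dissipation regime $\nu\le|k|$.

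In that regime, I would localize by a partition of unity adapted to the critical set of $v$. Since $v$ is Morse with no critical point on $\partial\Omega$, its critical points $y_1,\dots,y_N$ are finite in number and interior to $\Omega$. Around each $y_j$ fix a Morse chart $U_j$ with local coordinates $z=(z_1,\dots,z_d)$ in which
\[
  v(y) \,=\, v(y_j) + \tfrac12 \sum_{i=1}^d \epsilon_i^{(j)}z_i^2\,, \qquad \epsilon_i^{(j)}\in\{\pm 1\}\,,
\]
smooth cutoffs $\chi_j\in C_c^\infty(U_j)$ equal to $1$ near $y_j$, and set $\chi_0=1-\sum_{j\ge 1}\chi_j$, so that $|\nabla v|\ge\gamma>0$ on $\supp\chi_0$. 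Multiplying $(L_{\nu,k}-i\mu)u=f$ by $\chi_j$ reduces the global bound to uniform local bounds on each $\chi_j u$; the commutator errors $\nu[\Delta_y,\chi_j]u$ are controlled by the basic energy identity $\nu\|\nabla_y u\|_{L^2}^2\le \|f\|_{L^2}\|u\|_{L^2}$ and absorbed. On $\supp\chi_0$, the level hypersurfaces of $v$ provide coordinates $(s,\theta)$ with $s=v$ transverse and $\theta$ tangential; projecting onto each leaf and using $|\nabla v|\ge\gamma$ reduces matters to the one-dimensional $m=1$ argument of Theorem~\ref{thm:main1} applied in the $s$-variable, giving a contribution bounded by $(\nu/|k|)^{1/3}$. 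On each $U_j$, the Morse change of variables transforms the problem into the model equation
\[
  -\nu\,\nabla_z\cdot\bigl(A_j(z)\nabla_z w\bigr) + ik\Bigl(\tfrac12\sum_{i=1}^d\epsilon_i^{(j)}z_i^2 - c\Bigr)w \,=\, h
\]
on a bounded neighborhood of $0$, with $A_j$ smooth, symmetric, and uniformly elliptic, and $c=\mu/k-v(y_j)\in\RR$ an arbitrary parameter. Since the quadratic form is a sum of one-variable quadratics, a resolvent estimate $\|w\|_{L^2}\le C(\nu|k|)^{-1/2}\|h\|_{L^2}$ should be obtainable either by tensorizing the 1D $m=2$ estimate across the coordinate directions, or more robustly by a multi-dimensional hypocoercive functional with weight tailored to $\sum\epsilon_i^{(j)}z_i^2$. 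Taking the minimum of the local rates yields the desired $\lambda_{\nu,k}$.

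The main obstacle is the critical-point model problem. Unlike in one dimension, the quadratic form may mix signatures (the critical point can be a saddle), so its level sets near $0$ are noncompact hyperbolic hypersurfaces; one has to show that the bound $(\nu|k|)^{-1/2}$ is uniform in $c$, in particular across the critical value $c=0$ where different branches of level sets pinch together. A related subtlety is the passage from the flat-metric model to the variable-coefficient operator induced by the Morse chart: in hypocoercivity language, this amounts to tracking the constants in the commutator identities with enough precision to absorb the perturbation $A_j(z)-\mathrm{Id}$ into the leading dissipation term. Both difficulties should yield to the same kind of energy manipulation underlying the 1D proof, once the auxiliary multiplier is chosen adapted to the signature of the quadratic form at each critical point.
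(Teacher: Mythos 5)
Your high-level strategy matches the paper's: derive a resolvent bound for $H_{\nu,k}=-\nu\Delta_y+ikv$ uniformly along the imaginary axis and then apply the quantitative Gearhart--Pr\"uss theorem. You also correctly identify the Morse lemma and a partition of unity near the critical set as the key geometric tools. But the implementation you propose has a genuine gap precisely where you flag ``the main obstacle'': you would need a resolvent bound $\|w\|\le C(\nu|k|)^{-1/2}\|h\|$ for the saddle-point model operator $-\nu\nabla_z\cdot(A_j\nabla_z\,\cdot)+ik(\tfrac12\sum\epsilon_i z_i^2 - c)$, uniformly in the real parameter $c$, including across $c=0$ where the level sets pinch and are noncompact hyperbolic surfaces. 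You offer two speculative routes (tensorizing the 1D $m=2$ bound, or a signature-adapted hypocoercive multiplier) but carry out neither; the tensorization route in particular does not transfer straightforwardly to mixed signature, since the 1D estimate is for $y^2$, not $-y^2$, and the two directions fight each other near the critical value. Likewise, your reduction on the noncritical region to ``the one-dimensional $m=1$ argument applied in the $s$-variable'' glosses over the tangential part of the Laplacian and the curvature terms in the coarea coordinates. As written, the proposal identifies the difficulties but does not resolve them, so the proof is incomplete.

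The paper sidesteps exactly this difficulty by not localizing the resolvent equation at all. Instead it isolates a single, purely geometric property of $v$ (Assumption~\ref{Hypv}, an ``$H^1$-thinness'' inequality for the $\delta$-neighborhoods of the thickened level sets $E_{\lambda,\delta}^m$), and proves \emph{one} global resolvent estimate (Proposition~\ref{prop:resol}) conditional on that property, using a sign multiplier $\chi=\phi(\delta^{-1}\sign(v-\lambda)\dist(\cdot,E_{\lambda,\delta}^m))$ rather than a partition of unity applied to the equation. Verifying Assumption~\ref{Hypv} for Morse functions (Lemma~\ref{lem:EEd}) \emph{does} use the Morse lemma and a partition of unity, but only to control the integral $\int_{\EE_{\lambda,\delta}^m}|g|^2$, which for the saddle model $|y|^2-|z|^2$ is handled slice-by-slice via Fubini (Lemma~\ref{lem:Morse2}): for each fixed $z$ the $y$-slice of the thickened level set is an annulus of width $O(\delta)$, so the noncompactness of the level hypersurface is completely harmless, and the bound $\int_{\EE_{\lambda,\delta}^2}|g|^2\le C\delta\|g\|\|\nabla g\|$ holds uniformly in $\lambda$, including across $\lambda=0$. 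That is the mechanism your proposal is missing: turning the hard resolvent question near a saddle into an easy measure-and-Poincar\'e estimate on thickened level sets. Your Taylor-dispersion argument (Neumann eigenbasis, treat $ikv$ as a perturbation of the gap $\nu\lambda_1$) is a different route from the paper's, which instead observes that the level-set inequality is trivially available for a single fixed $\delta$ whenever $v$ is nonconstant (Lemma~\ref{lem:fixed}); your route is plausible and would likely work, but it does not unify the two regimes the way the paper's does.
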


Many classical examples, such as the plane Couette flow or the Poiseuille flow
in a cylindrical pipe, are covered by Theorem~\ref{thm:main2}, but of course one
can imagine more degenerate situations where $v$ is not a Morse function. In
Section~\ref{sec2} below, we formulate a general condition on the level sets of
$v$ (Assumption~\ref{Hypv}) which ensures that estimate \eqref{lamnuk} holds for
all $\nu > 0$ and all $k \neq 0$. We then prove that our assumption holds for 
a one-dimensional map satisfying \eqref{eq:vder} and for a Morse function in 
any dimension, but we also give other examples which do not fall into these
categories. One may conjecture that estimate \eqref{lamnuk} holds for some $m
\in \NN^*$ if $v \in C^m(\overline{\Omega})$ and if, for any critical point
$\bar y \in \overline{\Omega}$, there exists an integer $n \in \{1,\dots,m\}$
such that the $n$-th order differential ${\rm d}^n v(\bar y)$ is nondegenerate, 
but proving that using our techniques requires nontrivial additional work. 
In a different direction, we also believe that Theorem~\ref{thm:main2} remains 
true for Morse-Bott functions, whose critical points can form submanifolds of 
nonzero dimension. A thorough examination of these interesting questions
is left for a future work, but a modest discussion of possible extensions of
our results can be found in Section~\ref{ssec23} below.

At this point, it is important to observe that, although
Theorems~\ref{thm:main1} and \ref{thm:main2} treat the enhanced dissipation and
the Taylor dispersion regimes in a unified way, the minimal assumptions on $v$
that are needed to obtain the decay estimate \eqref{lamnuk} are very different
in both situations.  On the one hand, we expect that the expression of
$\lambda_{\nu,k}$ is optimal when $\nu \ll |k|$ if $v$ has indeed a critical
point where the $n$-th order differential vanishes for all $n < m$. In
particular, there is no enhanced dissipation effect at all if, for instance, $v$
is constant on a nonempty open subset of $\Omega$. In contrast, the following
result shows that estimate \eqref{lamnuk} holds in the Taylor dispersion regime
whenever the shear velocity $v$ is not identically constant.

\begin{theorem}\label{thm:main3}
If $v : \overline{\Omega} \to \RR$ is continuous and not identically
constant, there exist positive constants $C_1, C_2$ such that the decay 
estimate \eqref{lamnuk} holds in the Taylor dispersion regime $0 < |k| \le \nu$. 
\end{theorem}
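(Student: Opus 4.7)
The plan is to prove this via a direct hypocoercivity argument with a modified energy encoding the Taylor dispersion mechanism. After the Galilean normalization $\int_\Omega v\,\dd y = 0$, let $P$ denote the orthogonal projection onto constants in $L^2(\Omega)$, so $Pg = |\Omega|^{-1}\int_\Omega g\,\dd y$ and $Q = I-P$. Since the Neumann Laplacian annihilates constants and $v$ has zero mean, equation \eqref{eq:geq} splits into
\begin{align*}
\de_t(Pg) &= -ik\,P(v\,Qg), \\
\de_t(Qg) &= \nu\Delta_y(Qg) - ikv\,Pg - ik\,Q(v\,Qg),
\end{align*}
and the plain $L^2$ identity reads $\tfrac{1}{2}\frac{\dd}{\dd t}\|g\|^2 = -\nu\|\nabla_y Qg\|^2$, giving decay of $Qg$ at rate $\sim\nu$ by Poincar\'e but no direct decay of $Pg$. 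The task is to recover the effective rate $k^2/\nu$ for $Pg$.

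To this end I would introduce the auxiliary potential $\phi\in H^1(\Omega)$ defined by $-\Delta_y\phi = v$ in $\Omega$ with $\de_\nu\phi = 0$ on $\de\Omega$ and $\int_\Omega\phi\,\dd y = 0$, which is well defined since $v\in L^\infty(\Omega)$ has zero mean. Because $v$ is not identically constant, the effective diffusivity $D := \int_\Omega v\phi\,\dd y = \|\nabla_y\phi\|_{L^2}^2$ is strictly positive. The modified energy is
\[
\EE(t) \,=\, \|g(k,t)\|_{L^2(\Omega)}^2 \,+\, \eps\,\frac{k}{\nu}\,\Im\!\left(\overline{Pg(t)}\int_\Omega \phi(y)\,Qg(t,y)\,\dd y\right),
\]
with $\eps > 0$ small. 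Cauchy--Schwarz bounds the cross term by a constant (depending on $\phi$ and $\Omega$) times $\eps(|k|/\nu)\|g\|^2$, so since $|k|\le\nu$, a choice of $\eps$ small enough (depending only on $\phi$ and $\Omega$) yields the coercivity $\tfrac{1}{2}\|g\|^2 \le \EE \le \tfrac{3}{2}\|g\|^2$.

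The core computation is the time derivative of the cross term. Using the evolution equations above together with the identities $\int\phi\Delta_y(Qg)\,\dd y = -\int v\,Qg\,\dd y$ and $\int\phi v\,\dd y = D$ (both from integration by parts with the Neumann BC), a direct calculation gives
\[
\frac{\dd}{\dd t}\Im\!\left(\overline{Pg}\int\phi\,Qg\right) = -kD\,|Pg|^2 + k\Re\!\left(\overline{P(vQg)}\int\phi\,Qg\right) - \nu|\Omega|\,\Im\!\left(\overline{Pg}\,P(vQg)\right) - k\Re\!\left(\overline{Pg}\int\phi v\,Qg\right).
\]
Multiplying by $\eps k/\nu$ and adding $\frac{\dd}{\dd t}\|g\|^2 = -2\nu\|\nabla_y Qg\|^2$ produces the favorable term $-\eps k^2 D/\nu\,|Pg|^2$ together with three errors: one purely quadratic in $Qg$ with factor $\eps k^2/\nu \le \eps\nu$, absorbed into the natural dissipation via Poincar\'e; and two cross terms of the form $\eps|k|\,|Pg|\,\|Qg\|$ and $\eps k^2/\nu\,|Pg|\,\|Qg\|$, split by Young's inequality into controlled portions of $\eps k^2 D/\nu\,|Pg|^2$ and $\nu\|Qg\|^2$. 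For $\eps$ small enough depending only on $D,\|v\|_{L^\infty},\|\phi\|_{L^2},|\Omega|$ and the Poincar\'e constant, all errors are absorbed and
\[
\frac{\dd}{\dd t}\EE \,\le\, -c\Bigl(\nu\|Qg\|_{L^2}^2 + \frac{k^2}{\nu}\,\|Pg\|_{L^2}^2\Bigr) \,\le\, -c'\,\frac{k^2}{\nu}\,\EE,
\]
the second step using $\nu \ge k^2/\nu$ (equivalent to $|k|\le\nu$) together with coercivity. Gr\"onwall then delivers \eqref{lamnuk}.

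The main obstacle is the calibration of Young's inequality for the cross error term $\eps|k|\,\|v\|_{L^\infty}\,|Pg|\,\|Qg\|$, arising from the $\nu$-factor $-\nu\int v\,Qg$ produced when integrating $\int\phi\,\Delta_y Qg\,\dd y$ by parts. Splitting it optimally requires a Young weight $\tau$ satisfying both $\tau \ge \nu\|v\|_{L^\infty}/(|k|D)$ (to dominate the $|Pg|^2$ portion with the favorable term) and $\tau \le \nu/(\eps\,|k|\,\|v\|_{L^\infty}\,C_P)$ (to dominate the $\|Qg\|^2$ portion with the natural dissipation); the interval is nonempty exactly when $\eps\lesssim D/\|v\|_{L^\infty}^2$, a $k,\nu$-independent condition. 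Alternatively, the quantitative Gearhart--Pr\"uss theorem applied to the uniform resolvent bound $\|(ikv-\nu\Delta_y-i\lambda)^{-1}\|_{L^2\to L^2}\le C\nu/k^2$ yields the same conclusion; this resolvent bound follows by testing the equation against $1$ and $\phi$ in the range $|\lambda|\lesssim|k|\,\|v\|_{L^\infty}$, and by taking imaginary parts of the standard energy identity for larger $|\lambda|$.
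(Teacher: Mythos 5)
Your proof is correct and takes a genuinely different route from the paper's. The paper establishes Theorem~\ref{thm:main3} through the resolvent machinery of Section~\ref{sec2}: Lemma~\ref{lem:fixed} shows via a Poincar\'e--Wirtinger argument that for any continuous, non-constant $v$ the thickened level set $\EE^1_{\lambda,\delta}$ satisfies \eqref{EEthin2} for a single fixed $\delta$, which suffices to run the proof of Proposition~\ref{prop:resol} in the regime $|k|\le\nu$ (because the optimization \eqref{eq:deltachoice} takes $\delta=\delta_0$ fixed there), and the decay then follows from the quantitative Gearhart--Pr\"uss bound of Proposition~\ref{prop:Wei}. Your argument instead is a self-contained hypocoercivity estimate with a cross term built from the Poisson potential $\phi$, $-\Delta_y\phi = v$ with Neumann data, so that the effective Taylor diffusivity $D=\int_\Omega v\phi\,\dd y=\|\nabla\phi\|^2>0$ appears directly as the coefficient of the favorable $-|Pg|^2$ contribution. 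This has several merits relative to the paper's own hypocoercivity treatment (Section~\ref{sec3}, see Remark~\ref{rem:TayHypo}): the cross term $\Im\bigl(\overline{Pg}\int\phi\,Qg\bigr)$ produces no troublesome boundary contributions under homogeneous Neumann conditions, precisely because both $\de_n g$ and $\de_n\phi$ vanish on $\de\Omega$ when one integrates $\int\phi\,\Delta_y Qg$ by parts; and it requires only $v$ continuous (enough to solve the Poisson problem and have $D>0$), matching the generality of the resolvent proof, whereas Section~\ref{sec3} needs $v\in C^2$ and Dirichlet conditions or a torus. The sign structure $-\eps\,(k^2D/\nu)|Pg|^2$ works for either sign of $k$, the Young calibration you spell out is exactly the right constraint $\eps\lesssim D/\|v\|_{L^\infty}^2$, and coercivity of $\EE$ together with $\nu\ge k^2/\nu$ then gives Gr\"onwall decay at rate $\sim k^2/\nu$ with $C_1$ absolute. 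In exchange, the paper's route is more uniform across the two regimes (the same resolvent lemma feeds both \eqref{eq:deltachoice} branches), while your construction is specific to Taylor dispersion and also exposes the link to the classical homogenization formula for the effective diffusion coefficient, an interpretation the paper explicitly defers to \cite{BCW}.
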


The rest of this paper is organized as follows. In Section~\ref{sec2}, we derive
accurate resolvent estimates for the linear operator $H_{\nu,k} = -\nu \Delta_y 
+ ikv(y)$, which is (up to a sign) the generator of the evolution defined by 
\eqref{eq:geq}. We impose an abstract condition on the shear velocity which, 
in the enhanced dissipation regime, implies that all level sets of $v$ 
are ``$H^1$-thin'', in a sense that is made precise in Appendix~\ref{sec:A}. 
We then check the validity of our assumption in concrete situations, for 
one-dimensional maps satisfying \eqref{eq:vder} and for Morse functions in all 
space dimensions. Finally, the semigroup estimate \eqref{lamnuk} is obtained 
from the resolvent bounds using a quantitative version of the Gearhart-Pr\"uss 
theorem which was recently obtained in \cite{Wei}, see also \cite{HS2}. 
This concludes the proof of Theorems~\ref{thm:main1}--\ref{thm:main3}. 

In Section~\ref{sec3}, we give an alternative proof of Theorem~\ref{thm:main2}
using the hypocoercivity method of Villani~\cite{Vi}, which was already used in
\cite{BCZ} to establish the enhanced dissipation estimate when $d = 1$. This
second approach is, in some sense, more direct and more elementary, since it
relies on relatively straightforward energy estimates for the solutions of the
evolution equation \eqref{eq:geq} in $H^1(\Omega)$.  However, to avoid
problematic contributions from the boundary, we now have to impose that $g = 0$
on $\partial\Omega$, or alternatively that $\Omega = \TT^d$ (the $d$-dimensional
torus). Moreover, to reduce complexity, we restrict ourselves to the Morse
case where $m =1$ or $2$, which means that the shear velocity has a finite
number of critical points which are all nondegenerate. This implies in
particular that the lowest eigenvalue of the semi-classical Hamiltonian
$-\nu\Delta_y + |\nabla v|^2$ in $L^2(\Omega)$ is bounded from below by
$C \nu^{\frac{m-1}{m}}$ as $\nu \to 0$, and this information is used to derive
the differential inequalities that eventually lead to estimate \eqref{lamnuk},
up to a logarithmic correction in the enhanced dissipation regime when $m=2$. As
is shown in \cite{BCZ}, the restriction $m \le 2$ can be removed at the expense
of introducing an energy functional with variable coefficients, which is
constructed using a suitable partition of unity. In the Taylor dispersion
setting, we also recover Theorem \ref{thm:main3} under slightly more restrictive
assumptions on the velocity profile, see Remark~\ref{rem:TayHypo} below for a
precise statement.

The efficiency of the methods implemented in this paper is briefly compared 
in the final Section~\ref{sec4}. In Appendix~\ref{sec:A}, we introduce and 
study a specific notion of ``thinness'' for arbitrary subsets of $\RR^d$, 
which is closely related to our Assumption~\ref{Hypv}. Finally, a few technical 
estimates that are used in the proof of Theorem~\ref{thm:main2} are 
collected in Appendix~\ref{sec:B}. 

\medskip\noindent{\bf Acknowledgments.} Th.G. is partially supported by the grant 
SingFlows ANR-18-CE40-0027 of the French National Research Agency (ANR). 
M.C.Z. acknowledges funding from the Royal Society through a University Research Fellowship 
(URF\textbackslash R1\textbackslash 191492). We thank the anonymous referees for
insightful comments and helpful suggestions. 

\section{Resolvent estimates}\label{sec2}

This section is devoted to the proof of our main results using a first approach,
which relies on spectral theory and resolvent estimates. We recall that $\Omega
\subset \RR^d$ is a smooth bounded domain, and $v : \overline{\Omega} \to \RR$ a
smooth function. Given any $\nu > 0$ and any $k \neq 0$, the linear evolution 
equation \eqref{eq:geq} can we written in the abstract form
\begin{equation}\label{eq:Hdef}
  \partial_t g + H_{\nu,k} g \,=\,0 \,, \qquad \hbox{where}\qquad
  H_{\nu,k} \,=\, -\nu \Delta_y + ikv(y)\,.
\end{equation}
We consider $H_{\nu,k}$ as a linear operator in the Hilbert space $X = 
L^2(\Omega)$ with domain
\[
  D(H) \,=\, \bigl\{g \in H^2(\Omega)\,;\, \cN\cdot \nabla g = 0 
  \hbox{ on }\partial \Omega\bigr\}\,,
\]
where $\cN$ denotes the outward unit normal on the boundary $\partial\Omega$.
Being a bounded perturbation of the Neumann Laplacian $-\nu\Delta_y$ in $\Omega$,
the operator $H_{\nu,k}$ has compact resolvent, hence purely discrete spectrum
\cite{Ka}. Moreover, for all $g \in D(H)$, we have the identities
\begin{equation}\label{numrange}
  \Re\,\langle H_{\nu,k} g,g\rangle \,=\, \nu \|\nabla g\|^2 \ge 0\,,
  \qquad \hbox{and}\qquad \Im\,\langle H_{\nu,k} g,g\rangle \,=\, k
  \int_\Omega v(y)|g(y)|^2\dd y\,,
\end{equation}
where $\langle\cdot,\cdot\rangle$ denotes the scalar product in $X$ and $\|\cdot\|$ 
the corresponding norm. This implies that the numerical range of $H_{\nu,k}$ is 
included in the infinite strip
\[
  S_k \,=\, \bigl\{z \in \CC\,;\, \Re(z) \ge 0\,,~ |\Im(z)| \le |k|
  \|v\|_{L^\infty(\Omega)}\bigr\}\,.
\]
The eigenvalues of $H_{\nu,k}$ form a sequence $(\mu_n)_{n \in \NN}$ of complex
numbers which satisfy $\mu_n \in S_k$ for all $n \in \NN$ and
$\Re(\mu_n) \to +\infty$ as $n \to \infty$. Furthermore, if we assume that
$k \neq 0$ and $v$ is not identically constant, we deduce from the first
relation in \eqref{numrange} that $\Re(\mu_n) > 0$ for all $n \in \NN$, so that
the imaginary axis in the complex plane is included in the resolvent set of the
operator $H_{\nu,k}$. Our goal is to obtain accurate estimates on the resolvent
norm $\|(H_{\nu,k}-z)^{-1} \|$ for all $z \in i\RR$. In particular, we need a
precise lower bound on the pseudospectral abscissa
\begin{equation}\label{eq:Psidef}
  \Psi(\nu,k) \,:=\, \biggl(\sup_{z \in i\RR} \|(H_{\nu,k}-z)^{-1} \|\biggr)^{-1}\,,
\end{equation}
as a function of the parameters $\nu$ and $k$. This quantity was introduced
and studied in \cite{GGN} for a related problem. It is easy to verify that 
$\Re(\mu_n) \ge \Psi(\nu,k)$ for all $n \in \NN$. More importantly, a recent 
result due to Dongyi Wei \cite{Wei}, see also Helffer \& Sj\"ostrand \cite{HS2}, 
shows that the quantity $\Psi(\nu,k)$ entirely controls the decay rate of 
the semigroup generated by $-H_{\nu,k}$. Indeed, applying \cite{Wei}*{Theorem~1.3}, 
we obtain: 

\begin{proposition}\label{prop:Wei}
The operator $-H_{\nu,k}$ is the generator of a strongly continuous 
semigroup in $X$ which satisfies, for all $g \in X$ and all $t \ge 0$, 
\begin{equation}\label{eq:Wei}
  \|\e^{-tH_{\nu,k}} g\| \,\le\, \e^{\pi/2}\,\e^{-t\Psi(\nu,k)}\|g\|\,.
\end{equation}
\end{proposition}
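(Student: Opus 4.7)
The statement is essentially a direct application of Wei's quantitative Gearhart--Pr\"uss theorem (\cite{Wei}*{Theorem~1.3}) to our concrete operator, so the plan is simply to verify the hypotheses of that abstract result. Concretely, I need to establish that $H_{\nu,k}$ is m-accretive on the Hilbert space $X = L^2(\Omega)$ with domain $D(H)$, and then match our definition \eqref{eq:Psidef} of $\Psi(\nu,k)$ with the quantity controlling the decay in Wei's theorem.

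Accretivity is already recorded in \eqref{numrange}: the identity $\Re\langle H_{\nu,k} g, g\rangle = \nu \|\nabla g\|^2 \ge 0$ shows that the numerical range of $H_{\nu,k}$ lies in the closed right half-plane. To upgrade this to m-accretivity, I would check that $H_{\nu,k} + \lambda : D(H) \to X$ is surjective for some (hence all) $\lambda > 0$. Since the Neumann Laplacian $-\nu \Delta_y$ on $\Omega$ is self-adjoint and non-negative with compact resolvent, and $ikv(y)$ is a bounded multiplication operator, this follows by standard relatively bounded perturbation theory \cite{Ka}: for $\lambda > |k|\,\|v\|_{L^\infty(\Omega)}$ one can invert $H_{\nu,k} + \lambda$ via a convergent Neumann series built from $(-\nu\Delta_y + \lambda)^{-1}$. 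The Lumer--Phillips theorem then guarantees that $-H_{\nu,k}$ generates a strongly continuous semigroup of contractions on $X$, which settles the qualitative part of the claim.

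With m-accretivity in hand, \cite{Wei}*{Theorem~1.3} applies verbatim and delivers the bound \eqref{eq:Wei} with the explicit prefactor $\e^{\pi/2}$, provided one identifies the pseudospectral abscissa appearing there with our $\Psi(\nu,k)$ from \eqref{eq:Psidef}; this identification is immediate from the definition. In this argument there is no real obstacle: the substantive analytic content lies entirely in the forthcoming resolvent bounds on $\|(H_{\nu,k}-z)^{-1}\|$ along the imaginary axis, which is what makes $\Psi(\nu,k)$ large. The only point to note is that if $v$ is identically constant then $0$ is an eigenvalue of $H_{\nu,k}$, so $\Psi(\nu,k) = 0$ and \eqref{eq:Wei} reduces to the trivial contraction estimate $\|\e^{-tH_{\nu,k}}g\| \le \|g\| \le \e^{\pi/2} \|g\|$, which is consistent with the stated bound.
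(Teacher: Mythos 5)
Your proposal matches the paper's argument: the paper likewise obtains the proposition by verifying accretivity via the numerical-range computation \eqref{numrange}, noting that $H_{\nu,k}$ is a bounded perturbation of the Neumann Laplacian, and then invoking Wei's quantitative Gearhart--Pr\"uss theorem (\cite{Wei}*{Theorem~1.3}) with the definition \eqref{eq:Psidef} of $\Psi(\nu,k)$. The extra detail you supply on m-accretivity (Neumann series, Lumer--Phillips) is a sound unpacking of what the paper leaves implicit.
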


So, to obtain the decay estimate \eqref{lamnuk} with $C_1 = \e^{\pi/2}$, all we 
need is to derive a lower bound of the form $\Psi(\nu,k) \ge C_2 \lambda_{\nu,k}$ 
for some positive constant $C_2$ that is independent of the parameters $\nu,k$. 

\medskip
To do that, we first exploit the assumption that $k \neq 0$ and write 
any $z \in i\RR$ in the form $z = ik\lambda$ with $\lambda \in \RR$, so 
that
\begin{equation}\label{eq:Hdef2}
  H_{\nu,k}-z \,=\, H_{\nu,k,\lambda} \,:=\, -\nu \Delta_y + ik\bigl(v(y)-
  \lambda\bigr)\,.
\end{equation}
It is apparent from \eqref{eq:Hdef2} that the estimates we can hope for 
depend on the properties of the level sets 
\[
  E_\lambda \,=\, \bigl\{y \in \Omega \,;\, v(y) = \lambda\bigr\}\,, 
  \quad \lambda \in \RR\,.
\]
For instance, if $E_\lambda$ has nonempty interior for some $\lambda \in \RR$,
then clearly $H_{\nu,k,\lambda}g = -\nu \Delta_y g$ for any function
$g \in D(H)$ that is supported in the interior of $E_\lambda$. As is easily
verified, this implies that $\Psi(\nu,k) \le C\nu$ for some positive constant
$C$, which means that there is no enhanced dissipation effect in such a case. 
So, to observe a nontrivial influence of the advection term in \eqref{eq:geq},
we must assume at least that all levels sets of the function $v$ are ``thin'' 
in an appropriate sense. 

To formulate our assumption precisely, we introduce the following notation. 
We give ourselves a positive integer $m \in \NN^*$, which will be related to 
the maximal degree of degeneracy of the critical points of $v$, as is explained 
in the introduction. For any $\lambda \in \RR$ and any $\delta > 0$, we then 
define the ``thickened level set''
\begin{equation}\label{eq:Edef}
  E_{\lambda,\delta}^m \,=\, \bigl\{y \in \Omega \,;\, |v(y) - \lambda| 
  < \delta^m\bigr\}\,,
\end{equation}
which is the union of the level sets $E_{\lambda'}$ for all $\lambda' \in 
(\lambda-\delta^m,\lambda+\delta^m)$. We also consider the $\delta$-neighborhood
\begin{equation}\label{eq:EEdef}
  \EE_{\lambda,\delta}^m \,=\, \bigl\{y \in \Omega \,;\, \dist(y,E_{\lambda,\delta}^m )
  < \delta\bigr\}\,,
\end{equation}
where ``$\dist$'' denotes the Euclidean distance in $\RR^d$. These definitions
are made so that the set $\EE_{\lambda,\delta}^m$ enjoys the following properties:

\begin{enumerate}[leftmargin=25pt]

\item[a)] If $y \in \Omega\setminus \EE_{\lambda,\delta}^m$, then $|v(y) - \lambda|
\ge \delta^m$ (this is clear from \eqref{eq:Edef} because $\EE_{\lambda,\delta}^m
\supset E_{\lambda,\delta}^m$).

\item[b)] A $\delta$-neighborhood of the level set $E_\lambda$ is included
in $\EE_{\lambda,\delta}^m$ (this follows from \eqref{eq:EEdef} since 
$E_{\lambda,\delta}^m \supset E_\lambda$).

\end{enumerate}

We can now formulate our general assumption on the function $v : \Omega \to \RR$.

\begin{assumption}\label{Hypv}
There exist a positive integer $m \in \NN^*$ and positive real constants 
$C_0, \delta_0$ such that, for all $\lambda \in \RR$ and all $\delta \in 
(0,\delta_0]$, the following inequality holds for all $g \in H^1(\Omega)$:
\begin{equation}\label{EEthin}
  \int_{\EE_{\lambda,\delta}^m} |g(y)|^2 \dd y \,\le\, \frac12 
  \int_\Omega  |g(y)|^2 \dd y + C_0 \delta^2 \int_\Omega  |\nabla g(y)|^2 \dd y\,. 
\end{equation}
\end{assumption}

\begin{remark}\label{rem:kappa}
The factor $1/2$ in \eqref{EEthin} can be replaced by any fixed real number 
$\kappa \in (0,1)$ without altering the definition, see Lemma~\ref{lem:kappa}
below for a similar statement. To avoid introducing yet another parameter, 
we stick to Assumption~\ref{Hypv} as it is stated, but it is useful to keep 
the general case in mind.   
\end{remark}

It is not easy to characterize precisely the functions $v$ that satisfy
Assumption~\ref{Hypv}, but the following observations can be made. First, we
emphasize that inequality \eqref{EEthin} must hold for {\em all sufficiently
  small} $\delta > 0$; having it satisfied for just {\em one} small $\delta > 0$
is infinitely less restrictive, as is shown in Lemma~\ref{lem:fixed} below.
Next, since $E_\lambda \subset E_{\lambda,\delta}^m$, Assumption~\ref{Hypv}
implies in particular that the level sets $E_\lambda$ are ``$H^1$-thin'' according
to the definition given in Appendix~\ref{sec:A}. As is shown there, any Lipschitz
graph or any submanifold of nonzero codimension is $H^1$-thin. As a consequence, 
if $\EE_{\lambda,\delta}^m$ is contained in a neighborhood of size $\OO(\delta)$ 
of a Lipschitz graph or a submanifold of nonzero codimension, then 
inequality \eqref{EEthin} holds. In contrast, if $\EE_{\lambda,\delta}^m$ 
contains a ball of radius $R(\delta)$ such that $R(\delta)/\delta \to +\infty$ 
as $\delta \to 0$, then \eqref{EEthin} fails. So Assumption~\ref{Hypv}
roughly means that, locally, the set $\EE_{\lambda,\delta}^m$ is no thicker than 
$\OO(\delta)$ in some direction. 

We can now explain the role played by the integer $m \in \NN^*$ in 
definitions \eqref{eq:Edef}, \eqref{eq:EEdef}. Assume for instance that 
$0 \in \Omega$ and that $v(y) = |y|^n$ near the origin, where $n \in \NN$ 
and $n \ge 2$. Then for $\delta > 0$ sufficiently small, the thickened 
level set $E_{0,\delta}^m$ contains the ball of radius $\delta^{m/n}$ 
centered at the origin, so that the $\delta$-neighborhood $\EE_{0,\delta}^m$
contains the ball of radius $\delta + \delta^{m/n}$. As we just saw, 
for inequality \eqref{EEthin} to hold, this radius must be $\OO(\delta)$ 
as $\delta \to 0$, which is the case if $m \ge n$. So a necessary condition
for Assumption~\ref{Hypv} to hold is that the integer $m$ be chosen 
sufficiently large, depending on the degree of degeneracy of the critical 
points of the function $v$. For instance, we can take $m = 1$ if $v$ has no 
critical points, and $m = 2$ if all critical points are nondegenerate, 
i.e. if $v$ is a Morse function. Note that a critical level set $E_\lambda$
may also contain noncritical points, in a neighborhood of which the
set $E_{\lambda,\delta}^m$ is as thin as $\OO(\delta^m)$ in the direction
normal to $E_\lambda$; this is the reason for which we consider the
$\delta$-neighborhood $\EE_{\lambda,\delta}^m$, which satisfies property 
b) above that will be needed in our argument.

The main result of this section is: 

\begin{proposition}\label{prop:resol}
Assume that the shear velocity $v : \Omega \to \RR$ satisfies 
Assumption~\ref{Hypv} for some positive integer $m \in \NN^*$. Then
there exists a constant $C > 0$ such that, for all $\nu > 0$ and all $k \neq 0$,
\begin{equation}\label{eq:resol}
  \Psi(\nu,k) \,\ge\, C\,\lambda_{\nu,k}\,,
\end{equation}
where $\Psi(\nu,k)$ is defined in \eqref{eq:Psidef} and $\lambda_{\nu,k}$ 
in \eqref{lamnuk}. 
\end{proposition}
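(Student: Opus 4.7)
The target is equivalent to the coercivity bound $\|H_{\nu,k,\lambda}g\| \ge C\lambda_{\nu,k}\|g\|$ for every $\lambda \in \RR$ and every $g \in D(H)$, where $H_{\nu,k,\lambda} = -\nu\Delta_y + ik(v-\lambda)$ as in \eqref{eq:Hdef2}; taking the infimum over $g$ of the ratio $\|H_{\nu,k,\lambda}g\|/\|g\|$ and then the supremum over $\lambda$ recovers \eqref{eq:resol} via \eqref{eq:Psidef}. The starting point is the real-part identity $\nu\|\nabla g\|^2 = \Re\langle H_{\nu,k,\lambda}g,g\rangle \le \|H_{\nu,k,\lambda}g\|\,\|g\|$, which already controls $\|\nabla g\|^2$ by $\|H_{\nu,k,\lambda}g\|\,\|g\|/\nu$.

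Next, the plan is to split $\|g\|^2$ according to the thickened level set of Assumption~\ref{Hypv}:
\[
  \|g\|^2 \,=\, \int_{\EE_{\lambda,\delta}^m}|g|^2\dd y + \int_{\Omega\setminus\EE_{\lambda,\delta}^m}|g|^2\dd y,
\]
with a parameter $\delta > 0$ to be chosen. The first integral is immediately bounded by $\tfrac{1}{2}\|g\|^2 + C_0\delta^2\|\nabla g\|^2$ by \eqref{EEthin}. For the second, the idea is to exploit the pointwise lower bound $|v(y)-\lambda|\ge \delta^m$ on $\Omega\setminus\EE_{\lambda,\delta}^m$ via a smooth odd multiplier $\psi_\delta : \RR \to [-1,1]$ satisfying $\psi_\delta(s)s\ge 0$, $\psi_\delta(s) = \sign(s)$ for $|s|\ge\delta^m$, and $|\psi_\delta'(s)|\le C\delta^{-m}$. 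Computing the imaginary part of $\langle H_{\nu,k,\lambda}g,\psi_\delta(v-\lambda)g\rangle$ and integrating by parts against the Neumann boundary condition yields, schematically,
\[
  |k|\delta^m \int_{\Omega\setminus\EE_{\lambda,\delta}^m}|g|^2\dd y \,\le\, C\|H_{\nu,k,\lambda}g\|\,\|g\| + C\nu\|\nabla v\|_\infty\delta^{-m}\|\nabla g\|\,\|g\|,
\]
the second summand on the right arising from the commutator of $-\Delta_y$ with the cutoff $\psi_\delta(v-\lambda)$.

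Writing $h := \|H_{\nu,k,\lambda}g\|/\|g\|$ and combining the three estimates with $\|\nabla g\|^2 \le h\|g\|^2/\nu$ leads to a scalar inequality
\[
  \tfrac12 \,\le\, C_1\frac{\delta^2 h}{\nu} + C_2\frac{h}{|k|\delta^m} + C_3\frac{\sqrt{\nu h}}{|k|\delta^{2m}}.
\]
In the enhanced dissipation regime $\nu\le|k|$, balancing the first two summands forces $\delta\sim(\nu/|k|)^{1/(m+2)}$ and $h\gtrsim \nu^{m/(m+2)}|k|^{2/(m+2)}$. In the Taylor dispersion regime $|k|\le\nu$, taking $\delta=\delta_0$ fixed (the constant from Assumption~\ref{Hypv}) and complementing the estimate with a Poincar\'e-type bound on the zero-mean part of $g$ yields $h\gtrsim k^2/\nu$.

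The main obstacle I anticipate is the third summand in the scalar inequality above --- the commutator correction involving $\nabla v$. For $m=1$ it is automatically comparable at the balanced scale to the two principal terms and requires no extra work. For higher-order degeneracies $m\ge 2$ in the enhanced regime it is not a priori dominated, so the argument will need either a careful Young-type absorption (possibly exploiting the freedom in the constant $1/2$ noted in Remark~\ref{rem:kappa}) or a refinement of the multiplier adapted to the local geometry of the critical set of $v$, for instance a partition of unity separating critical from non-critical regions. A milder but analogous subtlety in the Taylor regime is that the constant-in-$y$ mode of $g$ cannot be controlled by Poincar\'e alone; to close the bound there one should also test the equation against a non-constant function of $v$, using crucially that $v$ is not identically constant.
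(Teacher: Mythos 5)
Your high-level strategy matches the paper's: split $\|g\|^2$ over the thickened level set and its complement, use $\Re\langle Hg,g\rangle = \nu\|\nabla g\|^2$ together with a sign-like multiplier in $\Im\langle Hg,\cdot\rangle$, and optimize over $\delta$. However, the specific choice of multiplier contains a genuine gap that you partially recognize but do not resolve, and that gap is precisely the technical heart of the proposition.

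You take the multiplier to be $\psi_\delta(v(y)-\lambda)$, where $\psi_\delta$ is an odd scalar function transitioning from $-1$ to $+1$ over the value interval $|s|<\delta^m$. This forces $|\psi_\delta'|\lesssim\delta^{-m}$, and hence $\bigl|\nabla[\psi_\delta(v-\lambda)]\bigr| = |\psi_\delta'(v-\lambda)|\,|\nabla v| \lesssim \delta^{-m}\|\nabla v\|_\infty$. The commutator term then carries a factor $\nu\delta^{-m}$, and after the substitution $\|\nabla g\|\le\sqrt{h/\nu}\,\|g\|$ and Young's inequality the resulting scalar inequality reads, in your notation, $c \le C_1\frac{\delta^2 h}{\nu} + C_2\frac{h}{|k|\delta^m} + C_3\frac{\nu h}{k^2\delta^{4m}}$. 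Optimizing this over $\delta$ gives only $h\gtrsim \nu^{(2m-1)/(2m+1)}|k|^{2/(2m+1)}$, which for $m\ge 2$ is strictly weaker than $\lambda_{\nu,k}=\nu^{m/(m+2)}|k|^{2/(m+2)}$ in the regime $\nu\le|k|$ (write both as $|k|\,(\nu/|k|)^\alpha$ and compare exponents $\alpha$). Tightening the constant $1/2$ via Remark~\ref{rem:kappa} does not help, because the obstruction sits in the $\delta^{-4m}$ power, not in the $\|g\|^2$ coefficient.

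The paper's multiplier is not a function of $v-\lambda$ but of \emph{position}: $\chi(y) = \phi\bigl(\tfrac{1}{\delta}\,\sign(v(y)-\lambda)\,\dist(y,E_{\lambda,\delta}^m)\bigr)$, with $\phi$ odd and $\phi(t)=\min(t,1)$ for $t\ge 0$. This $\chi$ satisfies $\chi(v-\lambda)\ge 0$ everywhere, equals $\sign(v-\lambda)$ outside $\EE_{\lambda,\delta}^m$ so that $\chi(v-\lambda)\ge\delta^m$ there, but transitions over a \emph{spatial} distance $\delta$ and is therefore Lipschitz with $\|\nabla\chi\|_\infty\le 1/\delta$, not $\delta^{-m}$. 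This decoupling of the gradient scale $\delta$ from the value scale $\delta^m$ is exactly what the $\delta$-neighborhood $\EE_{\lambda,\delta}^m$ in Assumption~\ref{Hypv} is designed for (the paper flags this when it introduces property b) of $\EE_{\lambda,\delta}^m$). With the paper's $\chi$, the commutator term becomes $\nu\delta^{-1}\|\nabla g\|\|g\|$ and the three resulting terms $\frac{1}{|k|\delta^m}$, $\frac{\nu}{k^2\delta^{2m+2}}$, $\frac{C_0\delta^2}{\nu}$ all balance to $1/\lambda_{\nu,k}$ at $\delta=\delta_0(\nu/|k|)^{1/(m+2)}$, for every $m\ge 1$. (Incidentally, in the Taylor regime $|k|\le\nu$ with $\delta=\delta_0$ fixed, your scalar inequality \emph{does} close directly with $h\gtrsim k^2/\nu$; no separate Poincar\'e argument on the zero-mean part of $g$ is needed at this stage. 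The Poincar\'e-Wirtinger step appears only in Lemma~\ref{lem:fixed}, i.e. in the verification of the hypothesis \eqref{EEthin2} for a single $\delta$, not in the proof of Proposition~\ref{prop:resol} itself.)
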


\begin{proof}
Fix $\nu > 0$, $k \neq 0$, $\lambda \in \RR$, and $\delta \in (0,\delta_0)$, 
where $\delta_0 > 0$ is as in Assumption~\ref{Hypv}. For simplicity we denote 
$H = H_{\nu,k,\lambda}$, where $H_{\nu,k,\lambda}$ is defined in \eqref{eq:Hdef2}. 
We introduce the localization function 
\[
  \chi(y) \,=\, \phi\Bigl(\frac{1}{\delta}\,\sign\bigl(v(y)-\lambda\bigr)\,
  \dist\bigl(y,E_{\lambda,\delta}^m\bigr)\Bigr)\,, \qquad y \in \Omega\,,
\]
where $\phi : \RR \to [-1,1]$ is the unique odd function such that $\phi(t) 
= \min(t,1)$ for $t \ge 0$. We have the following three properties: 

\smallskip\noindent 
\quad i) $\chi$ is locally Lipschitz in $\Omega$ with $\|\nabla\chi\|_{L^\infty} \le 
1/\delta$; 

\smallskip\noindent 
\quad ii) $\chi(y)\bigl(v(y)-\lambda\bigr) \ge 0$ for all $y \in \Omega$;

\smallskip\noindent 
\quad iii) $\EE_{\lambda,\delta}^m = \{y \in \Omega\,;\,|\chi(y)| < 1\}$. 

\smallskip\noindent
These properties mean that $\chi$ is a Lipschitz regularization of the
discontinuous function $\sign(v-\lambda)$, such that the transitions between 
the values $-1$ and $+1$ occur within the region $\EE_{\lambda,\delta}^m$. 
To prove the Lipschitz continuity, we observe that $E_{\lambda,\delta}^m$ is an 
open neighborhood of the level set $E_\lambda$, so that $\dist(y,E_{\lambda,\delta}^m)$ 
vanishes near the points where $\sign(v(y)-\lambda)$ is discontinuous. 
The remaining properties are obvious by construction.

For any $g \in D(H)$ we have, as in \eqref{numrange},
\begin{equation}\label{eq:Hreal}
  \Re \langle Hg,g\rangle \,=\, \nu \|\nabla g\|^2\,, \qquad 
  \hbox{hence}\qquad \nu \|\nabla g\|^2 \,\le\, \|Hg\|\,\|g\|\,.
\end{equation}
Moreover, a direct calculation shows that $\Im \langle Hg,\chi g\rangle = 
\nu\,\Im\langle \nabla g,g \nabla\chi\rangle + k \langle \chi(v-\lambda)g,
g\rangle$. Since $|\chi| \le 1$ and $|\nabla\chi|\le 1/\delta$ we deduce 
that
\begin{equation}\label{eq:Himag}
  |k| \langle \chi(v-\lambda)g,g\rangle \,\le\, \|Hg\|\,\|g\| + 
  \frac{\nu}{\delta}\,\|\nabla g\|\,\|g\| \,\le\, \|Hg\|\,\|g\| + 
  \frac{\nu^{1/2}}{\delta}\,\|Hg\|^{1/2}\,\|g\|^{3/2}\,,
\end{equation}
where the second inequality follows from \eqref{eq:Hreal}. We now decompose
\begin{equation}\label{intdecomp}
  \|g\|^2 \,=\, \int_{\Omega\setminus \EE} |g(y)|^2\dd y \,+\, 
  \int_\EE |g(y)|^2\dd y\,, \qquad \hbox{where}\quad 
  \EE \,:=\, \EE_{\lambda,\delta}^m\,,
\end{equation}
and we estimate both terms separately using \eqref{eq:Hreal}, \eqref{eq:Himag}. 

\medskip\noindent
{\bf 1.} If $y \in \Omega\setminus \EE$, then $y \notin E_{\lambda,\delta}^m$, 
hence $|v(y)-\lambda| \ge \delta^m$ by definition. In view of properties ii), 
iii) above, we even have $\chi(y)\bigl(v(y)-\lambda\bigr) \ge \delta^m$, 
so that 
\begin{equation}\label{eq:int1}
\begin{split}
  \int_{\Omega\setminus \EE} |g(y)|^2\dd y \,&\le\, \frac{1}{\delta^m}
  \int_{\Omega\setminus \EE} \chi(y)\bigl(v(y)-\lambda\bigr) |g(y)|^2\dd y \,\le\, 
  \frac{1}{|k|\delta^m}\,|k| \langle \chi(v-\lambda)g,g\rangle \\ 
  \,&\le\, \frac{1}{|k|\delta^m}
  \Bigl(\|Hg\|\,\|g\| + \frac{\nu^{1/2}}{\delta}\,\|Hg\|^{1/2}\,\|g\|^{3/2}\Bigr) \\
  \,&\le\, \biggl(\frac{1}{|k|\delta^m} + \frac{\nu}{k^2 \delta^{2m+2}}\biggr)
  \|Hg\|\,\|g\| + \frac14\,\|g\|^2\,,
\end{split}
\end{equation}
where in the second line we used \eqref{eq:Himag} and in the third line
Young's inequality. 

\medskip\noindent
{\bf 2.} Since $\EE =  \EE_{\lambda,\delta}^m$, inequality \eqref{EEthin} gives
\begin{equation}\label{eq:int2}
  \int_\EE |g(y)|^2\dd y \,\le\, \frac12\,\|g\|^2 + C_0 \delta^2 \|\nabla g\|^2
  \,\le\, \frac12\,\|g\|^2 + \frac{C_0 \delta^2}{\nu}\,\|Hg\|\,\|g\|\,.
\end{equation}
  
\medskip\noindent
Combining \eqref{intdecomp}--\eqref{eq:int2}, we arrive at
\begin{equation}\label{eq:int3}
  \frac14 \,\|g\| \,\le\, \biggl(\frac{1}{|k|\delta^m} + \frac{\nu}{k^2 
  \delta^{2m+2}} +  \frac{C_0 \delta^2}{\nu}\biggr)\,\|Hg\|\,.
\end{equation}
We now choose
\begin{equation}\label{eq:deltachoice}
  \delta \,=\, \delta_0 \Bigl(\frac{\nu}{|k|}\Bigr)^{\frac{1}{m+2}} \quad 
  \hbox{if } ~\nu \le |k|\,, \qquad \hbox{and} \qquad  \delta \,=\, \delta_0 
  \quad \hbox{if } ~\nu \ge |k|\,.
\end{equation}
Then \eqref{eq:int3} shows that
\[
  \|Hg\| \,\ge\, C
  \begin{cases} \nu^{\frac{m}{m+2}} |k|^{\frac{2}{m+2}} \|g\| & 
   \hbox{if} \quad 0 < \nu \le |k|\,,\\[1mm]
   \frac{k^2}{\nu}\|g\| & \hbox{if} \quad 0 < |k| \le \nu\,,
  \end{cases}
\]
where the constant $C$ depends only on $C_0$, $\delta_0$, and $m$. 
In other words, we have $\|Hg\| \ge C \lambda_{\nu,k}\|g\|$ for all 
$g \in D(H)$, where $\lambda_{\nu,k}$ is as in \eqref{lamnuk}. Since 
$H = H_{\nu,k,\lambda}$ and 
\begin{equation}\label{eq:Psieq}
  \Psi(\nu,k) \,=\, \inf\Bigl\{\|H_{\nu,k,\lambda}g\|\,;\, \lambda \in \RR\,,
  ~g \in D(H)\,,~\|g\| = 1\Bigr\}\,,
\end{equation}
we obtain the desired inequality \eqref{eq:resol}. 
\end{proof}

It is clear that the decay estimate \eqref{lamnuk} follows immediately from
inequalities \eqref{eq:Wei} and \eqref{eq:resol}. So, to prove
Theorems~\ref{thm:main1}--\ref{thm:main3}, what remains to be done is verifying
the validity of Assumption~\ref{Hypv}. We first investigate under which
conditions a continuous function $v$ satisfies inequality \eqref{EEthin} for
some {\em fixed} $\delta > 0$.

\begin{lemma}\label{lem:fixed}
If $v : \overline{\Omega} \to \RR$ is continuous and not identically constant, 
then for any sufficiently small $\delta > 0$ there exist constants $C > 0$ and 
$\kappa \in (0,1)$ such that, for all $\lambda \in \RR$ and all $g \in H^1(\Omega)$, 
\begin{equation}\label{EEthin2}
  \int_{\EE_{\lambda,\delta}^1} |g(y)|^2 \dd y \,\le\, \kappa \int_\Omega  |g(y)|^2 \dd y 
  + C \int_\Omega  |\nabla g(y)|^2 \dd y\,. 
\end{equation}
\end{lemma}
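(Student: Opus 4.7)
The plan is to combine a simple geometric observation with a standard Poincar\'e-type compactness inequality. First, I would exploit the assumption that $v$ is continuous and non-constant on the compact set $\overline{\Omega}$ (together with the density of $\Omega$ in $\overline{\Omega}$) to find interior points $y_0, y_1 \in \Omega$ with $v(y_0) < v(y_1)$; setting $2\eta := v(y_1) - v(y_0) > 0$ and using continuity of $v$ and openness of $\Omega$, I would pick a radius $r > 0$ small enough that $\overline{B(y_i, r)} \subset \Omega$ and $|v(y) - v(y_i)| < \eta/4$ for all $y \in \overline{B(y_i, r)}$ and $i = 0, 1$; write $B_i = B(y_i, r)$. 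Next, letting $\omega$ denote the modulus of continuity of $v$ on $\overline{\Omega}$, the definition of $\EE_{\lambda,\delta}^1$ gives $|v(y) - \lambda| < \delta + \omega(\delta)$ for every $y \in \EE_{\lambda,\delta}^1$. I would then choose $\delta$ small enough that $\delta + \omega(\delta) < \eta/4$: if $\EE_{\lambda,\delta}^1 \cap B_i \neq \emptyset$ then $|v(y_i) - \lambda| < \eta/2$, and since $|v(y_1) - v(y_0)| = 2\eta$ this cannot hold simultaneously for $i = 0$ and $i = 1$. Consequently, for every $\lambda \in \RR$ either $\EE_{\lambda,\delta}^1 \subset \Omega \setminus B_0$ or $\EE_{\lambda,\delta}^1 \subset \Omega \setminus B_1$.

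The main analytic ingredient is then the following Poincar\'e-type estimate: for any open ball $B$ with $\overline{B} \subset \Omega$, there exist $\kappa_B \in (0, 1)$ and $C_B > 0$ such that
\[
  \int_{\Omega \setminus B} |g(y)|^2 \dd y \,\le\, \kappa_B \int_\Omega |g(y)|^2 \dd y + C_B \int_\Omega |\nabla g(y)|^2 \dd y
\]
for all $g \in H^1(\Omega)$. This I would establish by a textbook compactness argument: if it failed, one would find $g_n \in H^1(\Omega)$ with $\|g_n\|_{L^2(\Omega)} = 1$, $\|g_n\|_{L^2(B)} \to 0$, and $\|\nabla g_n\|_{L^2(\Omega)} \to 0$, and the Rellich--Kondrachov theorem would extract a subsequence converging in $L^2(\Omega)$ to a function $g_\ast$ with $\nabla g_\ast = 0$. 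Since $\Omega$ is connected, $g_\ast$ is a nonzero constant, contradicting $\|g_\ast\|_{L^2(B)} = 0$.

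Applying this estimate with $B = B_0$ and $B = B_1$, and setting $\kappa := \max(\kappa_{B_0}, \kappa_{B_1}) \in (0,1)$ and $C := \max(C_{B_0}, C_{B_1}) > 0$, then yields \eqref{EEthin2} for all $\lambda \in \RR$. The only real obstacle is the Poincar\'e-type estimate itself; the geometric step is elementary and uses only uniform continuity and non-constancy of $v$ to produce two fixed escape balls that no single $\EE_{\lambda,\delta}^1$ can cover at once.
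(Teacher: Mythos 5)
Your proof is correct. The geometric step is essentially the same as the paper's: both arguments locate two ``escape'' balls, one of which must be disjoint from $\EE_{\lambda,\delta}^1$ for every $\lambda$ (the paper compares $\lambda$ to the midpoint $\lambda_0$ and distinguishes two cases; you argue by contradiction via the modulus of continuity — same substance). The analytic step is where you take a different route. The paper converts the escape-ball observation into the explicit measure bound $|\EE_{\lambda,\delta}^1| \le |\Omega| - |B(\delta)|$, and then runs a quantitative Poincar\'e--Wirtinger argument with the decomposition $g = \langle g\rangle + \tilde g$ and Young's inequality, yielding explicit constants $\kappa$ and $C$ in terms of $|B(\delta)|/|\Omega|$, $\rho$, and the Poincar\'e--Wirtinger constant $C_W$. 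You instead keep the set inclusion $\EE_{\lambda,\delta}^1 \subset \Omega\setminus B_i$ and invoke a Poincar\'e-type inequality for the complement of a fixed ball, proved by the usual Rellich--Kondrachov compactness/contradiction argument. Both are valid and rely on the same underlying compact embedding $H^1(\Omega)\hookrightarrow L^2(\Omega)$ for the smooth bounded domain $\Omega$; the paper's version buys explicit, traceable constants, whereas yours is a bit shorter to state but non-constructive. One cosmetic advantage of your variant is that the escape balls have a fixed radius $r$ rather than radius $\delta$, so the constant $\kappa$ does not degenerate as $\delta\to 0$ — though this does not matter for the lemma, which only needs one sufficiently small $\delta$.
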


\begin{proof}
Since $v$ is not constant, we can pick $y_1, y_2 \in \Omega$ such that 
$v(y_2) > v(y_1)$. We define
\[
  \lambda_0 \,=\, \frac{v(y_2) + v(y_1)}{2}\,, \qquad
  \gamma \,=\, \frac{v(y_2) - v(y_1)}{6}\,, 
\]
so that $v(y_1) = \lambda_0 -3\gamma$ and $v(y_2) = \lambda_0 +3\gamma$.  
We next choose $\delta > 0$ small enough so that\\[1mm]
\null\quad i) $\delta < \gamma$; \\[1mm]
\null\quad ii) $B(y_j,\delta) \subset \Omega$ for $j = 1,2$, where $B(y_j,\delta)$ 
is the open ball of radius $\delta$ centered at $y_j$;\\[1mm]
\null\quad iii) for all $y,\tilde y \in \Omega$ with $|y - \tilde y| < \delta$, 
one has $|v(y) - v(\tilde y)| < \gamma$.\\[1mm]
Property iii) holds because $v$ is continuous on the compact set 
$\overline{\Omega}$, hence uniformly continuous in $\Omega$. 

Given any $\lambda \in \RR$, we claim that
\begin{equation}\label{eq:auxinc}
  B(y_2,\delta) \,\subset\, \Omega\setminus\EE_{\lambda,\delta}^1 
  \quad \hbox{if}~ \lambda \le \lambda_0\,, \qquad \hbox{and}\qquad 
  B(y_1,\delta) \,\subset\, \Omega\setminus\EE_{\lambda,\delta}^1 
  \quad \hbox{if}~ \lambda \ge \lambda_0\,. 
\end{equation}
Let us prove the first assertion, the second one being similar. Assume thus 
that $\lambda \le \lambda_0$. By definition, if $y \in \EE_{\lambda,\delta}^1$, there 
exists $\tilde y \in E_{\lambda,\delta}^1$ such that $|y - \tilde y| < \delta$, hence 
by iii) and i) above
\[
  v(y) \,<\, v(\tilde y) + \gamma \,<\, \lambda + \delta + \gamma \,<\, 
  \lambda_0 + 2\gamma\,.
\]
On the other hand, for any $y \in B(y_2,\delta)$, one has $v(y) > v(y_2) - 
\gamma = \lambda_0 + 2\gamma$. It follows that $B(y_2,\delta) \cap 
\EE_{\lambda,\delta}^1 = \emptyset$, which is the first assertion in 
\eqref{eq:auxinc}. 

For any $\lambda \in \RR$, it follows from \eqref{eq:auxinc} that 
$|\EE_{\lambda,\delta}^1| \le |\Omega| - |B(\delta)|$, where $|B(\delta)|$ is the 
measure of a ball of radius $\delta$ in $\RR^d$. We now take $\rho > 0$ small 
enough so that
\begin{equation}\label{eq:kappadef}
  \kappa \,:=\, \bigl(1+\rho\bigr) \Bigl(1 - \frac{|B(\delta)|}{|\Omega|}
  \Bigr) \,<\, 1\,, \qquad \hbox{hence}\quad 
  |\EE_{\lambda,\delta}^1| \,\le\, \frac{\kappa\,|\Omega|}{1+\rho}\,.
\end{equation}
If $g \in H^1(\Omega)$, we decompose $g = \langle g\rangle + \tilde g$ where
$\langle g\rangle$ is the average of $g$ over $\Omega$. Using Young's 
inequality in the form $|g|^2 = \bigl|\langle g\rangle + \tilde g\bigr|^2 \le 
(1+\rho)|\langle g\rangle|^2 + \bigl(1 + 1/\rho\bigr) |\tilde g|^2$, we obtain
\begin{align*}
  \int_{\EE_{\lambda,\delta}^1} |g|^2 \dd y \,&\le\, (1+\rho)\int_{\EE_{\lambda,\delta}^1}
  |\langle g\rangle|^2 \dd y + \bigl(1 + 1/\rho\bigr)\int_{\EE_{\lambda,\delta}^1} 
  |\tilde g|^2 \dd y \\
  \,&\le\, (1+\rho)\frac{|\EE_{\lambda,\delta}^1|}{|\Omega|} \int_\Omega |\langle g
  \rangle|^2 \dd y + \bigl(1 + 1/\rho\bigr) \int_\Omega |\tilde g|^2 \dd y \\
  \,&\le\, \kappa \int_\Omega |g|^2 \dd y + \bigl(1 + 1/\rho\bigr) C_W^2 
  \int_\Omega |\nabla g|^2 \dd y\,,
\end{align*}
where in the last line we used \eqref{eq:kappadef}, the obvious bound 
$\|\langle g\rangle\| \le \|g\|$, and the Poincar\'e-Wirtinger inequality 
$\|\tilde g\| \le C_W \|\nabla\tilde g\|$. This gives the desired inequality
\eqref{EEthin2}.  
\end{proof}

\begin{proof}[Proof of Theorem~\ref{thm:main3}] 
In the Taylor dispersion regime where $|k| \le \nu$, the proof of 
Proposition~\ref{prop:resol} requires Assumption~\ref{Hypv} only for 
a fixed value of the parameter $\delta$, see \eqref{eq:deltachoice}. 
So, if $v : \overline{\Omega} \to \RR$ is continuous and not identically 
constant, we can use instead of \eqref{EEthin} inequality \eqref{EEthin2}, 
which is given by Lemma~\ref{lem:fixed}. The only (minor) difference is 
the factor $\kappa$ in the right-hand side of \eqref{EEthin2}, which 
may be larger than $1/2$, in which case one should modify \eqref{eq:int1}
so that the factor $1/4$ in the last member is replaced by $(1-\kappa)/2$. 
The rest of the proof is unchanged, and gives the desired inequality 
\eqref{eq:resol} when $|k| \le \nu$.   
\end{proof}

We next investigate the validity of inequality \eqref{EEthin} for {\em all sufficiently 
small} $\delta > 0$, which requires much stronger assumptions on the function $v$. 
We begin with the one-dimensional case, which is simpler. 

\subsection{The one-dimensional case}\label{ssec21} 
If $d = 1$, we can take $\Omega = (0,L)$, for some $L > 0$. Given a nonzero integer 
$m \in \NN$, we suppose that $v : [0,L] \to \RR$ is a function of class $C^m$ whose 
derivatives up to order $m$ do not vanish simultaneously:
\begin{equation}\label{eq:vder2}
  |v'(y)| + |v''(y)| + \dots + |v^{(m)}(y)| \,>\, 0\,, \quad 
  \hbox{for all } y \in [0,L]\,. 
\end{equation}
Our goal is to show that such a function satisfies Assumption~\ref{Hypv}, for 
the same value of $m$. 

We recall that $H^1(\Omega) \subset C^0(\overline{\Omega})$ and that the 
following inequality
\[
  \|g\|_{L^\infty}^2 \,\le\, \frac{1}{L}\,\|g\|_{L^2}^2 + 2 \|g\|_{L^2} \|g'\|_{L^2}
\]
holds for any $g \in H^1(\Omega)$. If $E \subset \Omega$ is any measurable set, 
we thus have
\begin{equation}\label{eq:Ebound}
  \int_E |g|^2 \dd y \,\le\, |E| \Bigl(\frac{1}{L}\,\|g\|_{L^2}^2 + 2 \|g\|_{L^2} 
  \|g'\|_{L^2}\Bigr) \,\le\, \Bigl(\frac14 + \frac{|E|}{L}\Bigr)\|g\|_{L^2}^2 + 
  4 |E|^2 \|g'\|_{L^2}^2\,,
\end{equation}
where in the second step we used Young's inequality. If $|E| \le \delta$ for some
$\delta \le L/4$, we see that inequality \eqref{eq:Ebound} is precisely of the 
form \eqref{EEthin}. This observation reveals that, in the one-dimensional case, 
it is sufficient to show that $|\EE_{\lambda,\delta}^m| = \OO(\delta)$ as $\delta \to 0$, 
uniformly with respect to $\lambda \in \RR$. 

We first estimate the length of the thickened level sets $E_{\lambda,\delta}^m$ 
defined in \eqref{eq:Edef}. 

\begin{lemma}\label{lem:E}
If $v \in C^m([0,L])$ satisfies \eqref{eq:vder2}, there exist positive constants
$C_0$, $\delta_0$ such that, for all $\lambda\in\RR$ and all $\delta \in (0,\delta_0)$,
one has $|E_{\lambda,\delta}^m| \le C_0\delta$. 
\end{lemma}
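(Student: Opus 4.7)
The plan is to reduce the global bound on $|E_{\lambda,\delta}^m|$ to a finite collection of local estimates, each handled by a classical one-dimensional sublevel set lemma of van der Corput type.

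First I would use compactness to localize. Hypothesis \eqref{eq:vder2} says that for every $y_0\in[0,L]$ there is some $n(y_0)\in\{1,\dots,m\}$ with $v^{(n(y_0))}(y_0)\neq 0$. By continuity of the derivatives, each such $y_0$ admits a (relatively) open neighborhood $U_{y_0}\subset[0,L]$ and a constant $c_{y_0}>0$ on which $|v^{(n(y_0))}|\ge c_{y_0}$. Compactness of $[0,L]$ extracts a finite subcover $U_1,\dots,U_N$ with associated orders $n_j\in\{1,\dots,m\}$ and lower bounds $c_j>0$. It then suffices to bound $|\{y\in U_j:|v(y)-\lambda|<\delta^m\}|$ by $C\delta$ uniformly in $\lambda$, for every $j$.

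The key tool is the following classical sublevel-set estimate: if $u\in C^n(I)$ on an interval $I$ satisfies $|u^{(n)}|\ge c$, then for every $\alpha>0$,
\[
   \bigl|\{y\in I : |u(y)|<\alpha\}\bigr| \,\le\, C_n\,(\alpha/c)^{1/n}.
\]
For $n=1$ this is a direct change of variables, since $u$ is strictly monotone. For $n\ge 2$, I would proceed by induction: because $|u^{(n)}|\ge c$, the derivative $u^{(n-1)}$ is strictly monotone, so the set $\{|u^{(n-1)}|\le A\}$ is an interval of length at most $2A/c$. On its complement in $I$, the inductive hypothesis applied at level $n-1$ (with the lower bound $A$) gives a bound $C_{n-1}(\alpha/A)^{1/(n-1)}$. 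Choosing $A$ to balance the two contributions yields the exponent $1/n$. Applying this with $u=v-\lambda$ (so $u^{(n_j)}=v^{(n_j)}$) on each $U_j$, with $\alpha=\delta^m$ and $n=n_j$, gives
\[
  \bigl|\{y\in U_j:|v(y)-\lambda|<\delta^m\}\bigr| \,\le\, C_{n_j}\,c_j^{-1/n_j}\,\delta^{m/n_j}.
\]

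The final step is a summation and an exponent comparison. Since $n_j\le m$, the exponent $m/n_j\ge 1$, so for any $\delta_0\in(0,1]$ and $\delta\in(0,\delta_0]$ we have $\delta^{m/n_j}\le\delta$. Summing over $j=1,\dots,N$ yields $|E_{\lambda,\delta}^m|\le C_0\,\delta$ with $C_0=\sum_j C_{n_j}c_j^{-1/n_j}$, and this constant is manifestly independent of $\lambda$. The only nontrivial ingredient is the sublevel-set lemma above, which is the expected obstacle; the cover/assembly argument is routine once that estimate is in hand.
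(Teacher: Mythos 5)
Your proof is correct, and it takes a genuinely different and somewhat cleaner route than the paper's. The paper fixes a reference level $\lambda_0$, isolates the finitely many zeros $y_1,\dots,y_N$ of $v-\lambda_0$, and near each one performs an explicit analysis of the roots $x_\lambda$ of $w(x)=\lambda$ with a case split according to the parity of the vanishing order $n$; the bound $|\tilde E^n_{\lambda,\delta}|\le C\delta$ then comes from factoring $x^n-x_\lambda^n$ and treating the subcases $\lambda\ge\delta^n$ and $\lambda<\delta^n$ separately. You instead cover $[0,L]$ directly by finitely many intervals $U_j$ on each of which $|v^{(n_j)}|\ge c_j>0$ (which is where the compactness and \eqref{eq:vder2} are used), and then invoke the classical one-dimensional sublevel-set estimate of van der Corput/Carbery--Christ--Wright type, $|\{|u|<\alpha\}|\le C_n(\alpha/c)^{1/n}$ for $|u^{(n)}|\ge c$; the inductive proof you sketch for it (split the interval where $|u^{(n-1)}|\le A$, apply the inductive bound on the complement, optimize $A$) is the standard one and is correct. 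The exponent comparison $m/n_j\ge 1$ then gives $\delta^{m/n_j}\le\delta$ for $\delta\le 1$, and summation over the finite cover produces a constant $C_0$ that is manifestly uniform in $\lambda$. What this buys you is modularity and uniformity for free: the sublevel-set lemma encapsulates all the root-counting, avoids the parity distinction entirely, and the finite cover is of the whole interval rather than of a $\lambda_0$-dependent level set, so no auxiliary limiting argument in $\lambda\to\lambda_0$ is needed. The paper's proof is more elementary and self-contained (it does not appeal to, or re-prove, the sublevel-set estimate as a standalone lemma), and it yields more explicit information about the shape of $E^m_{\lambda,\delta}$ as a union of short intervals, which the paper then reuses directly in the proof of Lemma~\ref{lem:EE}. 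Your approach would require a one-line extra observation for Lemma~\ref{lem:EE}: the set $\{y\in U_j:|v(y)-\lambda|<\delta^m\}$ need not be a single interval, but since it is an open subset of $U_j$ of measure $\le C\delta$, its $\delta$-neighborhood can a priori have many components; one can still bound $|\EE^m_{\lambda,\delta}|$ by noting that on each $U_j$ the sublevel set has at most $n_j$ connected components (again because $v^{(n_j)}$ has a sign), so its $\delta$-neighborhood has measure $\le C\delta + 2n_j\delta$.
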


\begin{proof}
Since $E_{\lambda,\delta}^m = \emptyset$ when $\lambda \notin v([0,L])$ and 
$\delta$ is sufficiently small, we need only prove the result for $\lambda$ 
in a compact neighborhood of the range of $v$. Thus, by a finite covering 
argument, it is sufficient to prove that, for any $\lambda_0 \in \RR$, we have
the bound $|E_{\lambda,\delta}^m| \le C\delta$ for all $\lambda \in \RR$
sufficiently close to $\lambda_0$ and for all $\delta > 0$ small enough. 
From now on, we fix $\lambda_0 \in \RR$ and we consider the (extended)
level set
\[
  \overline{E}_{\lambda_0} \,=\, \Bigl\{y \in [0,L]\,;\, v(y) = \lambda_0\Bigr\}
  \,=\, \bigl\{y_1,\dots,y_N\bigr\}\,,
\]
which is a finite set since, by \eqref{eq:vder2}, the zeros of the function
$y \to v(y)-\lambda_0$ are isolated. If $\overline{E}_{\lambda_0} = \emptyset$, then
$E_\lambda = \emptyset$ when $\lambda$ is sufficiently close to $\lambda_0$,
hence also $E_{\lambda,\delta}^m = \emptyset$ when $\delta > 0$ is small enough.
So we need only consider the situation where $N := {\rm card}(\overline{E}_{\lambda_0}) \ge 1$.
In that case, a standard continuity argument shows that, for any $\epsilon > 0$,
there exists $\eta > 0$ such that, if $|\lambda - \lambda_0| < \eta$ and 
$0 < \delta < \eta$, any point $y \in E_{\lambda,\delta}^m$ satisfies $\dist(y,
\overline{E}_{\lambda_0}) < \epsilon$. In particular, if $\epsilon > 0$ is sufficiently small, 
the thickened level set $E_{\lambda,\delta}^m$ is included in the union of the 
pairwise disjoint intervals $(y_j-\epsilon,y_j+\epsilon)$, where $j = 1\dots N$. 
This means that it is enough to consider the intersection $E_{\lambda,\delta}^m \cap 
(y_j-\epsilon,y_j+\epsilon)$ for each $j \in \{1,\dots,N\}$, which reduces
the problem to the particular case $N = 1$. 

In the rest of the proof, we thus assume that $\overline{E}_{\lambda_0} = \{y_1\}$ for some 
$y_1 \in [0,L]$. According to \eqref{eq:vder2}, there exists $n \in \{1,\dots,m\}$
such that $v^{(j)}(y_1) = 0$ for $j = 1,\dots,n-1$ and $v^{(n)}(y_1) \neq 0$. 
We distinguish two cases according to the parity of $n$. 

\smallskip \noindent{\bf Case 1\thinspace:} $n$ is odd. For definiteness, 
we suppose that $y_1 \in (0,L)$ and $v^{(n)}(y_1) > 0$ (the other situations
can be treated in the same way). We introduce the function $w(x) = v(y_1+x) 
- \lambda_0$, which satisfies $w(0) = 0$. By assumption, there exist an 
open interval $\cI \subset \RR$ containing the origin and two positive 
constants $\kappa_1, \kappa_2$ such that
\[
  \kappa_1 x^{n-1} \,\le\, w'(x) \,\le\, \kappa_2 x^{n-1}\,, \qquad 
  \hbox{for all } x \in \cI\,.
\]
In particular, for any $\lambda$ in a small neighborhood $\cV$ of the origin, 
the equation $w(x) = \lambda$ has exactly one solution $x_\lambda$ in $\cI$, 
which satisfies $\lambda x_\lambda \ge 0$ and
\begin{equation}\label{eq:xlam}
  \frac{n|\lambda|}{\kappa_2} \,\le\, |x_\lambda|^n \,\le\, \frac{n|\lambda|}{\kappa_1}\,, 
  \qquad  \hbox{for all } \lambda \in \cV\,.
\end{equation}
If $w(x) \ge \lambda$ for some $x \in \cI$ and some $\lambda \in \cV$, one has
\[
  w(x) - \lambda \,=\, w(x) - w(x_\lambda) \,=\, \int_{x_\lambda}^x w'(y)\dd y
  \,\ge\, \frac{\kappa_1}{n}\bigl(x^n - x_\lambda^n\bigr)\,,
\]
and a similar estimate holds when $w(x) \le \lambda$. Reducing the neighborhood 
$\cV$ if necessary and choosing a sufficiently small $\delta_0 > 0$, we conclude 
that, if $\lambda \in \cV$ and $0 < \delta < \delta_0$, then
\begin{equation}\label{eq:Elamloc}
  x \in \tilde E_{\lambda,\delta}^n \,:=\, \bigl\{x \in \cI\,;\, 
  |w(x) - \lambda| \,<\, \delta^n\bigr\} \qquad \Longrightarrow\qquad
  |x^n - x_\lambda^n| \,<\, \frac{n \delta^n}{\kappa_1}\,.
\end{equation}

We now estimate the measure of the set $\tilde E_{\lambda,\delta}^n$
in \eqref{eq:Elamloc}. Assume for instance that $\lambda \ge 0$ (the
other case being similar). If $\lambda \ge \delta^n$, then
$x_\lambda \ge (n/\kappa_2)^{1/n}\delta$ by \eqref{eq:xlam}, hence by
\eqref{eq:Elamloc} any $x \in \tilde E_{\lambda,\delta}^n$ satisfies
\begin{equation}\label{eq:Elam1}
  |x-x_\lambda| \,<\, \frac{n \delta^n}{\kappa_1}\,\frac{1}{x^{n-1} + \dots + 
  x_\lambda^{n-1}} \,\le\, \Bigl(\frac{n}{\kappa_1}\Bigr) \Bigl(\frac{\kappa_2}{n}
 \Bigr)^{1-1/n}\delta\,.
\end{equation}
If $0 \le \lambda \le \delta^n$, then $x_\lambda \le (n/\kappa_1)^{1/n}\delta$
by \eqref{eq:xlam}, hence using \eqref{eq:Elamloc} again we find that
any $x \in \tilde E_{\lambda,\delta}^n$ satisfies
\[
  |x|^n \,\le\, |x^n - x_\lambda^n| + x_\lambda^n \,<\, \frac{2n}{\kappa_1}\,
  \delta^n\,.
\]
In all cases we obtain $|\tilde E_{\lambda,\delta}^n| \le C\delta$, where the 
constant depends only on $n,\kappa_1,\kappa_2$. Returning to the original 
function $v$, we conclude that, if $\lambda -\lambda_0 \in \cV$ and $0 < \delta 
< \delta_0$, we have the estimate
\begin{equation}\label{eq:Eldbd}
  |E_{\lambda,\delta}^m| \,\le\, |E_{\lambda,\delta}^n| \,=\, |\tilde E_{\lambda,\delta}^n|
  \,\le\, C(n,\kappa_1,\kappa_2) \delta\,.
\end{equation}

\smallskip \noindent{\bf Case 2\thinspace:} $n$ is even. We again assume that
$y_1 \in (0,L)$ and $v^{(n)}(y_1) > 0$. If $w(x) = v(y_1+x) - \lambda_0$, there
exists an open interval $\cI \subset \RR$ containing the origin such that
\[
  \kappa_1 x^n \,\le\, xw'(x) \,\le\, \kappa_2 x^n\,, \qquad 
  \hbox{for all } x \in \cI\,.
\]
If the parameter $\lambda$ is restricted to a small neighborhood $\cV$ of 
the origin, the equation $w(x) = \lambda$ has no solution in $\cI$ when 
$\lambda < 0$, and exactly two solutions $x_\lambda^\pm \in \cI$ when $\lambda > 0$, 
which satisfy 
\begin{equation}\label{eq:xlampm}
  x_\lambda^- < 0 < x_\lambda^+\,, \qquad \hbox{and}\qquad
  \frac{n\lambda}{\kappa_2} \,\le\, \bigl|x_\lambda^\pm\bigr|^n \,\le\, 
  \frac{n\lambda}{\kappa_1}\,. 
\end{equation}
Assume now that $\lambda \in \cV$ and $0 < \delta < \delta_0$ for some 
sufficiently small $\delta_0 > 0$. If $\lambda \ge \delta^n$ and $x$ 
belongs to the set $\tilde E_{\lambda,\delta}^n$ defined in \eqref{eq:Elamloc},
then taking $x_\lambda \in \{x_\lambda^-,x_\lambda^+\}$ such that $x x_\lambda \ge 0$ 
we have, as in \eqref{eq:Elamloc}, \eqref{eq:Elam1}, 
\[
  |x^n - x_\lambda^n| \,<\, \frac{n \delta^n}{\kappa_1}\,, \qquad \hbox{hence}
  \quad |x - x_\lambda| \,<\, \Bigl(\frac{n}{\kappa_1}\Bigr) \Bigl(\frac{\kappa_2}{n}
 \Bigr)^{1-1/n}\delta\,.
\]
If $\lambda \le \delta^n$ and $x \in \tilde E_{\lambda,\delta}^n$, then $w(x) < \lambda + 
\delta^n \le 2\delta^n$, and using the lower bound $w(x) \ge (\kappa_1/n)x^n$ we 
deduce that $|x| < (2n/\kappa_1)^{1/n}\delta$. In all cases we thus obtain 
$|\tilde E_{\lambda,\delta}^n| \le C\delta$, and we conclude as in \eqref{eq:Eldbd}. 
\end{proof}

The same estimate also holds for the $\delta$-neighborhoods of the 
thickened level sets: 

\begin{lemma}\label{lem:EE}
If $v \in C^m([0,L])$ satisfies \eqref{eq:vder2}, there exists positive constants
$C_1$, $\delta_1$ such that, for all $\lambda\in\RR$ and all $\delta \in (0,\delta_1)$,
one has $|\EE_{\lambda,\delta}^m| \le C_1\delta$. 
\end{lemma}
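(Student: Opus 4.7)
The plan is to leverage Lemma~\ref{lem:E} together with a topological observation specific to one dimension: the $\delta$-neighborhood of an open set in $\RR$ inflates its measure only by at most $2\delta$ times the number of connected components. Thus it suffices to show that $E_{\lambda,\delta}^m$ has a number of connected components bounded uniformly in $\lambda$ and $\delta$, and then combine with the estimate $|E_{\lambda,\delta}^m|\le C_0\delta$ already obtained.

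First, I would observe that the hypothesis \eqref{eq:vder2} forces the set of critical points of $v$ in $[0,L]$ to be finite. Indeed, at any $y^* \in [0,L]$ with $v'(y^*)=0$, the condition \eqref{eq:vder2} provides some $n \in \{2,\dots,m\}$ with $v^{(n)}(y^*)\neq 0$, so by Taylor expansion $v'$ has a zero of finite order at $y^*$; hence critical points are isolated and, by compactness, finitely many. Let $N_v$ denote their number.

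Next comes the component-counting step, which is the real content of the lemma. Since $v$ is continuous, $E_{\lambda,\delta}^m = v^{-1}((\lambda-\delta^m,\lambda+\delta^m))$ is open in $(0,L)$ and thus a disjoint union of open intervals. For any two consecutive components $I_j < I_{j+1}$ separated by a closed gap $[a,b]$ with $a=\sup I_j$, $b=\inf I_{j+1}$, continuity forces $v(a),v(b) \in \{\lambda-\delta^m,\lambda+\delta^m\}$, while $|v-\lambda|\ge \delta^m$ throughout $[a,b]$. The intermediate value theorem rules out $v(a)\ne v(b)$, so $v(a)=v(b)$ and consequently $v$ attains an interior extremum on $(a,b)$, producing a critical point of $v$ inside $(a,b)$. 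Distinct gaps furnish distinct critical points, so the number of gaps is at most $N_v$, and hence $E_{\lambda,\delta}^m$ has at most $N_v+1$ connected components.

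Finally, since the $\delta$-neighborhood of an open interval $I\subset\RR$ (intersected with $\Omega$) has measure at most $|I|+2\delta$, summing over the at most $N_v+1$ components gives
\[
  |\EE_{\lambda,\delta}^m| \,\le\, |E_{\lambda,\delta}^m| + 2(N_v+1)\delta
  \,\le\, \bigl(C_0 + 2(N_v+1)\bigr)\delta\,,
\]
valid for all $\delta \in (0,\delta_1)$ with $\delta_1 \le \delta_0$, which yields the claim with $C_1 := C_0 + 2(N_v+1)$. The only conceptual obstacle is the gap-versus-critical-point argument; everything else is a direct application of Lemma~\ref{lem:E} and the one-dimensional geometry of $\delta$-neighborhoods.
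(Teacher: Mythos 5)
Your approach differs from the paper's, which does not count connected components abstractly: the published proof revisits the argument of Lemma~\ref{lem:E} and observes that, for $\lambda$ near a fixed $\lambda_0$ and $\delta$ small, the local Taylor analysis places $E_{\lambda,\delta}^m$ inside a uniformly bounded number of intervals each of length $\le C_0\delta$, so the $\delta$-neighborhood lies in the same number of intervals of length $\le (C_0+2)\delta$. You instead keep only the total measure bound $|E_{\lambda,\delta}^m|\le C_0\delta$ from Lemma~\ref{lem:E} and bound the number of components by a Rolle-type counting of critical points, which is cleaner and avoids reopening the case analysis in that proof.

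There are, however, two gaps in the component count. First, the step ``at most $N_v$ gaps between consecutive components, hence at most $N_v+1$ components'' presupposes that $E_{\lambda,\delta}^m$ already has finitely many components: for an open set with infinitely many components there may be no consecutive pairs at all (the complement of a Cantor set is the standard example), and then your argument bounds nothing. Finiteness does follow from \eqref{eq:vder2} by the same observation you used for critical points: every zero of $v-(\lambda+\delta^m)$ or of $v-(\lambda-\delta^m)$ has finite order by \eqref{eq:vder2}, hence is isolated, hence $\{v=\lambda+\delta^m\}\cup\{v=\lambda-\delta^m\}$ is a finite subset of $[0,L]$; since every interior endpoint of a component lies in this set, there are only finitely many components. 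This needs to be stated before the gap count. Second, your argument tacitly assumes $a<b$. If $a=b$ (two components sharing a single endpoint, which does occur, e.g.\ for $v(y)=(y-L/2)^2$ with $\lambda=\delta^m$), the interval $(a,b)$ is empty and Rolle gives nothing; but in that case $|v-\lambda|<\delta^m$ on both sides of $a$ while $|v(a)-\lambda|=\delta^m$, so $v$ has a strict local extremum at the interior point $a$, hence $v'(a)=0$, and $a$ itself serves as the critical point assigned to that degenerate gap. With these two repairs the argument is complete.
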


\begin{proof}
Again it is sufficient to estimate the measure of $\EE_{\lambda,\delta}^m$ for $\lambda$ 
close to some $\lambda_0 \in \RR$, and for $\delta > 0$ sufficiently small.  
The proof of Lemma~\ref{lem:E} shows that, under those assumptions, the thickened level 
set $E_{\lambda,\delta}^m$ is contained in the union of a finite number of intervals, the 
lengths of which are bounded by $C_0\delta$. Therefore, the $\delta$-neighborhood 
$\EE_{\lambda,\delta}^m$ is contained in a finite union of intervals of lengths 
$(C_0+2)\delta$, which gives the desired conclusion. 
\end{proof}

\begin{proof}[Proof of Theorem~\ref{thm:main1}] 
If $d = 1$ and $v$ satisfies \eqref{eq:vder}, Assumption~\ref{Hypv} holds 
as a consequence of Lemma~\ref{lem:EE} and estimate \eqref{eq:Ebound}. 
Thus, combining Propositions~\ref{prop:Wei} and \ref{prop:resol}, we obtain 
the desired conclusion. 
\end{proof}

\subsection{The case of Morse functions}\label{ssec22} 

Checking Assumption~\ref{Hypv} in the higher-dimensional case $d \ge 2$ is more
difficult, and we only consider here an important example. We assume that
$\Omega \subset \RR^d$ is a bounded domain with smooth boundary
$\partial \Omega$, and that $v : \overline{\Omega} \to \RR$ is a smooth {\em
  Morse function} with no critical point on the boundary. By this we first mean
that $v$ has only a finite number of critical points in $\Omega$, all of which
are nondegenerate. Moreover, by Whitney's extension theorem \cite{Hor1}, $v$ can
be extended to a smooth function on $\Omega_0 := \{y \in \RR^d\,;\, \dist(y,\Omega) 
<\epsilon_0\}$ for some $\epsilon_0 > 0$, and we assume that this extension 
(still denoted by $v$) has no critical point on $\partial\Omega$.

\begin{lemma}\label{lem:EEd}
If $v : \overline{\Omega} \to \RR$ is a smooth Morse function with no critical point 
on $\partial\Omega$, then Assumption~\ref{Hypv} holds with $m = 1$ if $v$ has 
no critical point in $\Omega$, and with $m = 2$ if $v$ has at least one critical 
point in $\Omega$. 
\end{lemma}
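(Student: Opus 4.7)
The plan is to verify Assumption~\ref{Hypv} by reducing \eqref{EEthin} to a purely geometric question about $\EE_{\lambda,\delta}^m$ and then invoking the $H^1$-thinness results of Appendix~\ref{sec:A}. Specifically, I aim to show that $\EE_{\lambda,\delta}^m$ is, uniformly in $\lambda$ and for all sufficiently small $\delta>0$, contained in a finite union of $\OO(\delta)$-tubular neighborhoods of smooth hypersurfaces, together with at most finitely many balls of radius $\OO(\delta)$ around critical points of $v$. Since a ball of radius $\OO(\delta)$ embeds into a slab of width $\OO(\delta)$ around a hyperplane and is therefore $H^1$-thin, summing the finitely many contributions and using Remark~\ref{rem:kappa} to adjust the constant $\kappa$ produces \eqref{EEthin}.

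When $v$ has no critical point in $\overline{\Omega}$, compactness gives $|\nabla v|\ge c_0>0$ on $\Omega_0$. The implicit function theorem then shows that, for $\lambda$ in the range of $v$, the level set $E_\lambda$ is a smooth compact hypersurface and $E_{\lambda,\delta}^1 = \{|v-\lambda|<\delta\}$ lies in a tubular neighborhood of $E_\lambda$ of width $\delta/c_0$. Consequently $\EE_{\lambda,\delta}^1$ lies in a tubular neighborhood of width $(1+1/c_0)\delta$, and Appendix~\ref{sec:A} yields \eqref{EEthin} with $m=1$.

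For the Morse case with $m=2$, let $\bar y_1,\dots,\bar y_N\in\Omega$ be the critical points and $\lambda_j := v(\bar y_j)$. Choose $r>0$ small enough that the balls $B_j := B(\bar y_j,2r)$ are pairwise disjoint and contained in $\Omega$, and such that $|\nabla v|\ge c_0>0$ on $K := \overline{\Omega}\setminus\bigcup_j B(\bar y_j,r)$. On $K$, the non-critical argument with $m=2$ puts $\EE_{\lambda,\delta}^2\cap K$ inside a tubular neighborhood of $E_\lambda\cap K$ of width $\OO(\delta)$ (with room to spare, since the level-set thickness is here even $\OO(\delta^2)$). Near each $\bar y_j$, Morse's lemma provides smooth coordinates in which $v(y)-\lambda_j$ equals a nondegenerate quadratic form $Q(x)=-\sum_{i\le p_j} x_i^2 + \sum_{i>p_j} x_i^2$. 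Setting $\mu := \lambda-\lambda_j$, I split into two subcases according to whether $|\mu|\le R\delta^2$ or $|\mu|>R\delta^2$, for a constant $R$ chosen sufficiently large. In the first regime, the part of $\EE_{\lambda,\delta}^2\cap B_j$ inside the small ball $\{|x|\le C\delta\}$ is handled as a ball of radius $\OO(\delta)$, while outside this ball $|\nabla Q|=2|x|\gtrsim\delta$ and the tubular-neighborhood estimate applies; in the second regime, every point of $\EE_{\lambda,\delta}^2\cap B_j$ satisfies $|x|^2\gtrsim|\mu|\gtrsim R\delta^2$, so $|\nabla v|\gtrsim \sqrt{R}\,\delta$ and the tubular-neighborhood estimate applies directly.

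The main obstacle I foresee is the saddle case in the regime $|\mu|\le R\delta^2$: because the zero set of an indefinite $Q$ is noncompact, one must verify carefully that restricting to the fixed Morse chart $B_j$ confines the singular near-origin portion of $\EE_{\lambda,\delta}^2$ to a Euclidean ball of radius $\OO(\delta)$, and that the constants produced by Morse's lemma can be chosen uniformly in $j$ and $\lambda$. Arranging the threshold $R$ so that the two subcases truly overlap on $B_j$, and tracking the $H^1$-thin constants through the partition of unity so that the final prefactor in \eqref{EEthin} remains bounded by $1/2$, will be the technically delicate part.
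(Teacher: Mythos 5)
Your framework coincides with the paper's for non-critical points and local extrema, but the treatment of saddle points is genuinely different, and that is exactly where your sketch leaves a gap. The paper does not try to represent the hyperboloid level sets near a saddle as tubular neighborhoods of hypersurfaces. Instead, Lemma~\ref{lem:Morse2} shows by a direct computation that for the canonical saddle $v(y,z)=|y|^2-|z|^2$, the set $\EE^2_{\lambda,\delta}$ is contained in a union $F_{\lambda,\delta}\cup G_{\lambda,\delta}$ where, for each fixed $z$, the $y$-slice of $F_{\lambda,\delta}$ is an annulus of width $\OO(\delta)$ (and symmetrically for $G$). Applying the radial annulus bound of Lemma~\ref{lem:annulus} slicewise and integrating over the remaining variables gives $\int_{\EE^2_{\lambda,\delta}}|g|^2\lesssim\delta\,\|g\|\,\|\nabla g\|$ with constants that are manifestly uniform in $\mu:=\lambda-v(\bar y_j)$, with no need to excise a ball near the vertex or to split into the regimes $|\mu|\lessgtr R\delta^2$. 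This slicing device is the one ingredient your sketch is missing, and it is the reason the saddle case is tractable.

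Your alternative of ``ball near the origin plus tubular neighborhood outside'' cannot invoke Lemma~\ref{lem:lip} directly once $d_1,d_2\ge2$: the hyperboloid $\{|y|^2-|z|^2=\mu\}$ is not a Lipschitz graph, and its principal curvatures blow up like $|\mu|^{-1/2}$ at the neck, so one needs a cover by Lipschitz graphs with cardinality and Lipschitz constants that are uniform as $\mu\to0$. That statement is plausible by a Gauss-map compactness argument, but it is not trivial, and you neither prove it nor cite a reference; moreover one must be careful not to replace it by a multi-scale cover with a partition of unity, since the cut-offs at radius $s$ have gradients of order $1/s$, and the scales $s\sim\delta$ would then feed a term of order $\|g\|^2$ back into the right-hand side of \eqref{EEthin}, destroying the inequality. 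In short, you correctly identify the saddle case as the obstacle, but your proposed resolution leaves the key geometric fact unestablished, whereas the paper's Lemma~\ref{lem:Morse2} sidesteps the problem entirely. The rest of the structure (extension $\tilde g$ of $g$ to a neighborhood of $\overline{\Omega}$, finite cover in $\lambda_0$ combined with the Morse lemma, partition of unity with $\sum\chi_j^2\le1$, use of Remark~\ref{rem:kappa}) matches the paper.
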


\begin{proof}
As in Lemma~\ref{lem:E}, it is sufficient to prove that, for all
$\lambda_0 \in \RR$, there exists a constant $C >0$ such that inequality
\eqref{EEthin} holds for all $\lambda$ sufficiently close to $\lambda_0$, all
$\delta > 0$ sufficiently small, and all $g \in H^1(\Omega)$. We thus fix
$\lambda_0 \in \RR$ and we denote by $y_1,\dots,y_N \in \Omega$ the critical 
points of $v$ that lie on the level set $E_{\lambda_0}$ (if there are none, we 
simply set $N = 0$). By the {\em Morse lemma} \cite{Mi}, for any $j \in \{1,\dots,N\}$ 
and any sufficiently small $\epsilon > 0$, there exist a neighborhood $V_j$ 
of $y_j$ in $\Omega$ and a smooth diffeomorphism $\phi_j : B(0,\epsilon) \to 
V_j$ such that $\phi_j(0) = y_j$ and 
\begin{equation}\label{vMorse}
  v(\phi_j(x)) = \lambda_0 + |x'|^2 - |x''|^2\,, \qquad \hbox{for all } 
  \,x = (x',x'') \in B(0,\epsilon) \,\subset\,\RR^d\,,
\end{equation}
where $x' \in \RR^{d_1}$, $x'' \in \RR^{d_2}$ with $d_1 + d_2 = d$. In other
words, after a smooth change of coordinates, we can assume that $v$ takes the
canonical form \eqref{vMorse} near the critical point $y_j$.

Similarly, in a neighborhood of any non-critical point $y \in 
\overline{E}_{\lambda_0} := \{y \in \overline{\Omega} \,;\, v(y) = \lambda_0\}$, 
we can transform $v$ into an affine function by a change of coordinates. 
The compact set $\overline{E}_{\lambda_0}$ being covered by these neighborhoods 
and by the sets $V_j$ for $j = 1,\dots,N$, we can extract a finite subcover. 
We can thus find $M$ points $y_{N+1},\dots,y_{N+M} \in \overline{E}_{\lambda_0}$ 
such that, for any $j \in \{N{+}1,\dots,N{+}M\}$, there exist a neighborhood 
$V_j$ of $y_j$ in $\Omega_0$ and a smooth diffeomorphism $\phi_j : 
B(0,\epsilon) \to V_j$ such that $\phi_j(0) = y_j$ and 
\begin{equation}\label{vMorse2}
  v(\phi_j(x)) = \lambda_0 + x_d \,, \qquad \hbox{for all } 
  \,x = (x_1,\dots,x_d) \in B(0,\epsilon)\,.
\end{equation}
Moreover we have 
\begin{equation}\label{cover}
  \overline{E}_{\lambda_0} \,\subset\, V \,:=\, V_1 \cup \dots \cup V_N \cup V_{N+1}
  \cup \dots \cup V_{N+M} \,\subset\, \Omega_0\,.
\end{equation}

Our next tool is a smooth partition of unity $(\chi_j)$ associated with the open 
cover \eqref{cover}. More precisely, there exists smooth functions $\chi_j 
: \RR^d \to \RR$ such that $K_j := \supp(\chi_j) \subset V_j$ for $j = 1,\dots,N+M$, 
and 
\begin{equation}\label{chipart}
  \sum_{j=1}^{N+M} \chi_j(y)^2 \,\le\, 1 \quad \hbox{for all }\, y \in \Omega_0\,,
  \qquad 
  \sum_{j=1}^{N+M} \chi_j(y)^2 \,=\, 1 \quad \hbox{for all }\, y \in E\,,
\end{equation}
for some open set $E \subset \RR^d$ with $\overline{E}_{\lambda_0} \subset E 
\subset \overline{E} \subset V$. For later use, we observe that $\EE_{\lambda,\delta}^m 
\subset E$ whenever $\lambda$ is sufficiently close to $\lambda_0$ and $\delta > 0$ is 
sufficiently small. 

Now, let $g \in H^1(\Omega)$. Since the boundary $\partial\Omega$ is smooth, 
we can extend $g$ to a function $\tilde g \in H^1(\Omega_0)$ which satisfies
\begin{equation}\label{gext}
  \|\tilde g\|_{L^2(\Omega_0)} \,\le\, 2 \|g\|_{L^2(\Omega)}\,, \qquad
  \|\tilde g\|_{H^1(\Omega_0)} \,\le\, C \|g\|_{H^1(\Omega)}\,, 
\end{equation}
for some constant $C > 0$ (independent of $g$). This extension will allow us 
to treat the boundary points $y_j \in \partial\Omega$ exactly as the interior 
points $y_j \in \Omega$. Since  $\EE_{\lambda,\delta}^m \subset E$ when $\lambda$ 
is sufficiently close to $\lambda_0$ and $\delta > 0$ is sufficiently small, 
we can use the partition of unity \eqref{chipart} to decompose
\begin{equation}\label{EEdecomp}
  \int_{\EE_{\lambda,\delta}^m} |g(y)|^2 \dd y \,=\, 
  \sum_{j=1}^{N+M}  \int_{\EE_{\lambda,\delta}^m} |g(y)|^2 \chi_j(y)^2\dd y 
  \,=\, \sum_{j=1}^{N+M}  \int_{\EE_{\lambda,\delta}^m} |g_j(y)|^2\dd y\,,
\end{equation}
where $g_j := \tilde g\,\chi_j$ is supported in $K_j \subset V_j$ for $j = 
1,\dots,N{+}M$. 

It remains to estimate the integrals in the right-hand side of \eqref{EEdecomp}, 
which can be written in the form
\begin{equation}\label{IJdef}
  I_j \,:=\, \int_{\EE_{\lambda,\delta}^m \cap K_j} |g_j(y)|^2\dd y \,=\,
  \int_{\phi_j^{-1}(\EE_{\lambda,\delta}^m \cap K_j)} |g_j(\phi_j(x))|^2 \,|J_{\phi_j}(x)|
  \dd x\,,
\end{equation}
where $J_{\phi_j}$ is the Jacobian determinant of the diffeomorphism $\phi_j$. 
We can find constants $L,\Lambda \ge 1$ such that 
\[
  L^{-1}\,|x-\tilde x| \,\le\, |\phi_j(x) - \phi_j(\tilde x)|  
  \,\le\, L |x-\tilde x|\,, \qquad \hbox{and}\qquad
  \Lambda^{-1} \,\le\, |J_{\phi_j}(x)| \,\le\, \Lambda\,,
\]
for all points $x,\tilde x \in B(0,\epsilon)$ and all integers $j \in 
\{1,\dots,N{+}M\}$. It is thus straightforward to verify that $\phi_j^{-1}
(\EE_{\lambda,\delta}^m \cap K_j) \subset \EE_{\lambda,L\delta,j}^m$ if $\delta > 0$ 
is small enough, where 
\[
  \EE_{\lambda,\delta,j}^m \,:=\, \bigl\{x \in B(0,\epsilon)\,;\, \dist(x,
  E_{\lambda,\delta,j}^m) < \delta \bigr\}\,, \quad 
  E_{\lambda,\delta,j}^m \,:=\, \bigl\{x \in B(0,\epsilon)\,;\, \big|
  v(\phi_j(x)) - \lambda\big| < \delta^m \bigr\}\,.
\]
Note that the sets $E_{\lambda,\delta,j}^m, \EE_{\lambda,\delta,j}^m$ are defined in terms 
of the canonical forms $v\circ\phi_j$ exactly as the sets \eqref{eq:Edef}, 
\eqref{eq:EEdef} were defined in terms of the original function $v$. We now
distinguish two cases:

\smallskip\noindent{\bf 1\thinspace:} {\em The critical points}. If 
$N \ge 1$ and $j \in \{1,\dots,N\}$, we necessarily have $m = 2$. In view 
of the canonical form \eqref{vMorse}, we apply Lemma~\ref{lem:Morse1} if 
$d_1 d_2 = 0$ (the case of a local extremum) or Lemma~\ref{lem:Morse2} 
if $d_1 d_2 > 0$ (the case of a saddle point). Denoting $h_j = g_j \circ \phi_j$, 
we thus obtain
\begin{equation}\label{IJest}
  I_j \,\le\, \Lambda \int_{\EE_{\lambda,L\delta,j}^m} |g_j(\phi_j(x))|^2 \dd x 
  \,\le\, C \delta \|h_j\| \|\nabla h_j\| \,\le\, \kappa 
  \|h_j\|^2 + C \delta^2 \|\nabla h_j\|^2\,,
\end{equation}
where the constant $\kappa > 0$ can be taken arbitrarily small. Returning
to the original variable $y$ we arrive at
\begin{equation}\label{hnabla}
\begin{split}
  \|h_j\|_{L^2}^2 \,&=\, \int_{B(0,\epsilon)} |g_j(\phi_j(x))|^2 \dd x \,\le\, 
  \Lambda\int_{V_j} |g_j(y)|^2 \dd y \,=\, \Lambda\int_{V_j} |\tilde g_j(y)|^2 
  \chi_j(y)^2 \dd y \\
  \|\nabla h_j\|_{L^2}^2 \,&\le\, C \int_{V_j} |\nabla g_j(y)|^2 \dd y \,\le\, 
  C \int_{V_j} \Bigl(|\nabla \tilde g(y)|^2 \chi_j(y)^2 + |\tilde g(y)|^2 
  |\nabla \chi_j(y)|^2\Bigr)\dd y\,.
\end{split} 
\end{equation}

\smallskip\noindent{\bf 2\thinspace:} {\em The non-critical points}. 
If $j \in \{N{+1},\dots,N{+}M\}$ and $m = 1$ or $2$, then due to the simple canonical 
form \eqref{vMorse2} we have the inclusions $\EE_{\lambda,L\delta,j}^m \subset 
\EE_{\lambda,L\delta,j}^1 \subset E_{\lambda,2L\delta,j}^1$. Therefore, applying 
Lemma~\ref{lem:lip} we obtain the same estimate \eqref{IJest} for the 
quantity $I_j$, and \eqref{hnabla} is unchanged too. Note that, if $y_j 
\in \partial\Omega$, the open set $V_j$ is not entirely contained in $\Omega$, 
so that we need to use the extension $\tilde g$ instead of $g$ in the 
right-hand side of \eqref{hnabla}. 

\smallskip
Summarizing, we deduce from \eqref{chipart}, \eqref{gext}, and 
\eqref{EEdecomp}--\eqref{hnabla} that
\begin{align*}
  \int_{\EE_{\lambda,\delta}^m} |g(y)|^2 \dd y \,=\, \sum_{j=1}^{N+M} I_j
  \,&\le\, \kappa \Lambda \int_{\Omega_0} |\tilde g(y)|^2 \dd y + C\delta^2 
  \int_{\Omega_0}\Bigl(|\nabla\tilde g(y)|^2 + |\tilde g(y)|^2\Bigr)\dd y \\
  \,&\le\, \frac{1}{2}\int_\Omega |g(y)|^2 \dd y + C\delta^2 \int_\Omega
  |\nabla g(y)|^2\dd y\,,
\end{align*}
provided $\kappa \Lambda < 1/8$ and $\delta > 0$ is sufficiently. This gives the desired 
estimate \eqref{EEthin}. 
\end{proof}

\begin{proof}[Proof of Theorem~\ref{thm:main2}] 
If $v$ is a Morse function, Assumption~\ref{Hypv} holds with $m = 1$ or $2$ 
in view of Lemma~\ref{lem:EEd}, and the desired conclusion follows from 
Propositions~\ref{prop:Wei} and \ref{prop:resol}. 
\end{proof}

\subsection{Additional examples}\label{ssec23} 

Lemmas~\ref{lem:EE} and \ref{lem:EEd} give general conditions that imply
the validity of Assumption~\ref{Hypv}, but our approach has a much broader 
scope and allows us potentially to treat many flows which do not fall into
these categories. We first consider two illustrative examples, and then
briefly discuss in which direction the assumptions of Theorem~\ref{thm:main2}
could be weakened. 

\begin{example}\label{ex:degen}
Assume that $\Omega = B(0,1) \subset \RR^d$ and that $v(y) = 1 - |y|^m$, 
for some $m \in \NN^*$. Then Assumption~\ref{Hypv} is verified for that value 
of $m$, so that estimate \eqref{lamnuk} holds for all $\nu > 0$ and all 
$k \neq 0$.
\end{example}

To see this, we observe that, if $\lambda \in \RR$ and $\delta > 0$, we
have $\EE_{\lambda,\delta}^m \subset \bigl\{y \in \Omega \,;\, R_1 \le |y| < R_2
\bigr\}$, where
\[
  R_1 \,=\, \bigl(1-\lambda-\delta^m\bigr)_+^{1/m} - \delta\,, \qquad
  R_2 \,=\, \bigl(1-\lambda+\delta^m\bigr)_+^{1/m} + \delta\,.
\]
An easy calculation shows that $R_2 - R_1 \le 4\delta$. So, if $g \in H^1(\Omega)$ 
and if $\tilde g \in H^1(\RR^d)$ is an extension of $g$ to $\Omega_0 = 
\RR^d$ satisfying \eqref{gext}, we can apply Lemma~\ref{lem:annulus} in 
Appendix~\ref{sec:B} to obtain the estimate
\[
  \int_{\EE_{\lambda,\delta}^m} |g(y)|^2\dd y \,\le\, 
  \int_{\{R_1 \le |y| < R_2\}} |\tilde g(y)|^2\dd y \,\le\, 
  8\delta \,\|\tilde g\|_{L^2} \,\|\nabla \tilde g\|_{L^2}\,,
\]
from which \eqref{EEthin} easily follows using \eqref{gext} and Young's 
inequality. Note that the value $m = 1$ is allowed, in which case the 
function $v$ is not smooth. In fact, we can allow $m$ to be an arbitrary 
positive real number in that example, but the constant $C_0$ in \eqref{EEthin}
diverges in the singular limit $m \to 0$.  

\begin{example}\label{ex:MorseBott}
Assume that $\Omega = B(0,1) \times B(0,1) \subset \RR^{d_1} \times \RR^{d_2}$ 
and that $v(y,z) = 1 - |y|^m$ for all $(y,z) \in \Omega$. If $d_1 \ge 1$ we have 
the same conclusions as in Example~\ref{ex:degen}. 
\end{example}

We can assume that $d_2 \ge 1$ too, otherwise we recover Example~\ref{ex:degen}. 
In the case $m = 2$, which is already interesting, the function $v$ is 
Morse-Bott: its critical points form a submanifold $S = \{(0,z)\,;\, 
|z| < 1\}$, and the second differential of $v$ is nondegenerate in the 
directions that are transverse to $S$. This example can be treated 
in the same way as the previous one. We have $\EE_{\lambda,\delta}^m \subset 
\bigl\{(y,z) \in \Omega \,;\, R_1 \le |y| < R_2\bigr\}$, with $R_1, R_2$ as 
above, so using Lemma~\ref{lem:annulus} and Fubini's theorem we obtain
\[
  \int_{\EE_{\lambda,\delta}^m} |g(y,z)|^2\dd y\dd z \,\le\, 
  \int_{\RR^{d_2}}\int_{\{R_1 \le |y| < R_2\}} |\tilde g(y,z)|^2\dd y\dd z \,\le\, 
  8\delta \,\|\tilde g\|_{L^2} \,\|\nabla \tilde g\|_{L^2}\,,
\]
and estimate \eqref{EEthin} follows by the same argument. Note that the domain 
$\Omega$ is not smooth in that example. 

\medskip
In a broader perspective, it is possible to prove the validity of
Assumption~\ref{Hypv} for smooth functions $v : \Omega \to \RR$ with degenerate
critical points under two conditions. The first one is that the degree of
degeneracy of the critical points be finite at least in some direction of the
space. Morse-Bott functions, for instance, may have critical level sets that
form smooth submanifolds of finite codimension, but in transverse directions
to the submanifold the critical points are nondegenerate, and this is sufficient
to obtain inequality \eqref{EEthin} with $m = 2$ using the same techniques as in
Lemma~\ref{lem:EEd} and Example~\ref{ex:MorseBott}. The details are postponed to
a future work.  On the other hand, if $v$ has an infinitely degenerate critical
point, such as in the classical example $v(y) = \exp(-1/|y|)$, it is rather
obvious that inequality \eqref{EEthin} fails for any choice of $m \in \NN^*$.

The second condition is more technical in nature: loosely speaking, there should
exist a convenient normal form describing the behavior of $v$ in a neighborhood
of any critical point. In the one-dimensional case, we can use the Taylor
expansion at the first nontrivial order, and for nondegenerate critical points
in higher dimensions the Morse lemma describes exactly the local behavior of the
function. To our knowledge, there is no convenient analogue of the Morse lemma for
degenerate critical points, and this is what really prevents us extending
Theorem~\ref{thm:main1} to higher dimensions, although simple cases such as
Example~\ref{ex:degen} can be treated by {\em ad hoc} arguments.

\subsection{A comment on optimality}\label{ssec24}

It is natural to ask whether our Assumption~\ref{Hypv} is necessary for
the result of Proposition~\ref{prop:resol} to hold. At present time we
cannot formulate any precise statement in this direction, but we believe
that hypothesis \eqref{EEthin} is not far from optimal. At least
in simple geometric situations, the failure of that inequality implies
properties of the (thickened) level sets of the function $v$ which
are incompatible with estimate \eqref{eq:resol} in the enhanced
dissipation regime.

To explain that, we consider the one-dimensional domain $\Omega = \RR$, which is
noncompact so that the results of Section~\ref{ssec21} do not apply. For
simplicity, we assume that $v : \RR \to \RR$ is a smooth, increasing and
globally Lipschitz function, for which Assumption~\ref{Hypv} with $m = 1$ does
not hold. This means that, for arbitrary small values of $\delta > 0$,
inequality \eqref{EEthin} fails for arbitrary large values of the constant $C_0$
and appropriate choices of the parameter $\lambda \in \RR$. Since $v$ is
increasing, the sets $E_{\lambda,\delta}^1$ and $\EE_{\lambda,\delta}^1$ are
just open intervals in the present case, and if we denote by $\ell$ the length
of $E_{\lambda,\delta}^1$ it is easily verified that the failure of
\eqref{EEthin} is equivalent to the property that $\ell/\delta \to +\infty$ as
$\delta \to 0$.

In what follows we assume for notational simplicity that $E_{\lambda,\delta}^1
= (-\ell/2,\ell/2)$. We give ourselves a smooth function $h \in C^2_c(\RR)$
which is not identically zero and satisfies $\supp(h) \subset (-1/2,1/2)$. 
We consider the function $g(y) := h(y/\ell)$, which is supported
in $E_{\lambda,\delta}^1$, and we compute $Hg$ where $H =  H_{\nu,k,\lambda}$ is
the operator defined in \eqref{eq:Hdef2}. The result is
\[
  \bigl(Hg\bigr)(y) \,=\, -\frac{\nu}{\ell^2}\,h''(y/\ell) + ik
  \bigl(v(y)-\lambda\bigr)\,h(y/\ell)\,, \quad y \in \RR\,.
\]
As $|v(y)-\lambda| < \delta$ when $y \in \supp(g)$, we easily deduce that
\begin{equation}\label{eq:Hgup}
  \|Hg\| \,\le\, C\Bigl(\frac{\nu}{\ell^2} + |k|\delta
  \Bigr) \|g\|\,,
\end{equation}
where the constant $C > 0$ does not depend on $\delta$, $\nu$, or $k$.

Now, if $\nu = |k| \ell^2 \delta$, which is compatible with the enhanced
dissipation regime, both terms inside the parenthesis are equal, and inequality
\eqref{eq:Hgup} becomes $\|Hg\| \le C\,\nu^{1/3}\,|k|^{2/3}\,\epsilon^{2/3}\|g\|$,
where $\epsilon := \delta/\ell$.  In view of \eqref{eq:Psieq} we deduce that
$\Psi(\nu,k) \le C \nu^{1/3}\,|k|^{2/3} \,\epsilon^{2/3}$, which contradicts
\eqref{eq:resol} in the case $m = 1$ because $\epsilon \to 0$ as $\delta \to
0$. Similar arguments can be used to prove the optimality of
Assumption~\ref{Hypv} with $m = 2$ for functions $v$ that have nondegenerate
critical points.

\section{Energy estimates}\label{sec3}

In this section we give an alternative proof of a slight variation of Theorems
\ref{thm:main2}--\ref{thm:main3}, using a direct energy method usually referred
to as hypocoercivity \cite{Vi}.To avoid a few technicalities related to boundaries, 
we restrict ourselves to the following two cases:
\begin{enumerate}
 \item[1.] $\Omega = \TT^d$, the $d$-dimensional periodic box;
 \item[2.] $\Omega\subset \RR^d$ a smooth bounded domain, with {\em homogeneous
Dirichlet boundary conditions} for $g$.
\end{enumerate}
The result involves an energy functional of the form
\begin{align}\label{eq:hypofun}
  \Phi=\frac12 \left[\|g\|^2+\alpha\|\nabla g\|^2+2\beta \Re \l ik g\nabla v ,
  \nabla g \r+\gamma  k^2\| g\nabla v \|^2\right],
\end{align}
where $\langle\cdot,\cdot\rangle$ denotes the scalar product in $X = L^2(\Omega)$ 
and $\|\cdot\|$ the associated norm. The parameters $\alpha,\beta,\gamma$ depend on 
$\nu,k$ in a different way according to whether we consider the enhanced dissipation 
regime or the Taylor dispersion regime. We prove the following result:

\begin{theorem}\label{thm:main3.1}
Assume that $v : \overline{\Omega} \to \RR$ is a smooth Morse function with 
no critical point on the boundary $\partial\Omega$. There exist positive 
constants $\beta_0,C_3$ such that, for all $\nu > 0$, all $k \neq 0$, and all 
initial data $g_0 \in H^1_0(\Omega)$, the solution of \eqref{eq:geq} satisfies, 
for all $t \ge 0$,  
\begin{equation}\label{lamnuk3.1}
  \Phi(t) \,\le\, \e^{-C_3 \lambda_{\nu,k}t}\,  \Phi(0) \,, 
  \quad\hbox{where }~ \lambda_{\nu,k} \,=\,
  \begin{cases} \nu^{\frac{m}{m+2}} |k|^{\frac{2}{m+2}} & \hbox{if }~ 
  0 < \nu \le \beta_0 |k|\,,\\ \frac{k^2}{\nu} & \hbox{if }~ 0 < \beta_0|k| \le \nu\,.
  \end{cases}
\end{equation}
Here $m = 1$ if $v$ has no critical point in $\Omega$, and $m = 2$ if $v$ has at least 
one critical point in $\Omega$.
\end{theorem}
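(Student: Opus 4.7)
The strategy is the hypocoercivity method of Villani: I will establish a differential inequality of the form $\ddt \Phi(t) \le -C_3 \lambda_{\nu,k}\,\Phi(t)$, from which \eqref{lamnuk3.1} follows by Gr\"onwall. Along the solution of \eqref{eq:geq}, recalling $\partial_t g = \nu\Delta_y g - ikvg$ and that $v$ is real-valued, term-by-term differentiation of \eqref{eq:hypofun} yields (the symmetry $\Re\l vg\nabla v,\nabla g\r = \Re\l g\nabla v, v\nabla g\r$ producing the cancellation of the $vg\nabla v$ contributions in the derivative of the cross term)
\begin{align*}
\ddt\Phi \,=&\, -\nu\|\nabla g\|^2 - \nu\alpha\|\Delta_y g\|^2 - \alpha\,\Re\l ikg\nabla v,\nabla g\r \\
&\, - \beta k^2\|g\nabla v\|^2 - \gamma k^2\nu\int_\Omega|\nabla g|^2|\nabla v|^2\,\dd y + \nu(\beta R_1 - \gamma k^2 R_2),
\end{align*}
where $R_1, R_2$ are explicit bilinear remainders in $(\nabla g, \Delta g)$ with coefficients depending on $Dv$ and $D^2 v$, finite by smoothness of $v$. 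The key hypocoercive gain $-\beta k^2\|g\nabla v\|^2$ arises from the commutator-like structure of the cross term in $\Phi$.

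Next, the indefinite piece $-\alpha\Re\l ikg\nabla v,\nabla g\r$ is handled by Cauchy--Schwarz and Young's inequality: $|\alpha\Re\l ikg\nabla v,\nabla g\r|\le \tfrac{\beta}{2}k^2\|g\nabla v\|^2 + \tfrac{\alpha^2}{2\beta}\|\nabla g\|^2$, which is absorbed into $-\beta k^2\|g\nabla v\|^2$ and $-\nu\|\nabla g\|^2$ provided $\alpha^2\le\beta\nu$. The remainders $\nu\beta R_1,\,\nu\gamma k^2 R_2$ involve $\|\nabla v\|_{L^\infty}$ and $\|D^2 v\|_{L^\infty}$ together with $\Delta g$ or $\nabla g$; they are controlled against the reserves $-\nu\alpha\|\Delta g\|^2$, $-\gamma k^2\nu\int|\nabla g|^2|\nabla v|^2\,\dd y$, and $-\nu\|\nabla g\|^2$ by further Cauchy--Schwarz, under additional algebraic constraints on $\alpha,\beta,\gamma$. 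After these absorptions, the effective dissipation satisfies $\ddt\Phi \le -\tfrac{\nu}{2}\|\nabla g\|^2 - \tfrac{\beta}{2}k^2\|g\nabla v\|^2$.

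To close the argument, I invoke the semiclassical lower bound for the Schr\"odinger-type operator $-\nu\Delta_y + k^2|\nabla v|^2$: the Morse structure of $v$ implies that its bottom eigenvalue $\mu(\nu,k)$ satisfies $\mu(\nu,k)\ge C\,\nu^{(m-1)/m}|k|^{2/m}$, i.e. $\mu\ge ck^2$ for $m=1$ (since $|\nabla v|\ge c>0$) and $\mu\ge c\sqrt{\nu}|k|$ for $m=2$ (via the harmonic oscillator estimate at each nondegenerate critical point). Equivalently,
$$\nu\|\nabla g\|^2 + k^2\|g\nabla v\|^2 \,\ge\, C\,\nu^{(m-1)/m}|k|^{2/m}\,\|g\|^2.$$
In the enhanced dissipation regime $\nu\le\beta_0|k|$, I choose $\alpha,\beta,\gamma$ as specific powers of $\nu/|k|$ calibrated to $\lambda_{\nu,k}=\nu^{m/(m+2)}|k|^{2/(m+2)}$, satisfying both the absorption constraint $\alpha^2\lesssim\beta\nu$ and the coercivity constraint $\alpha\gamma\gtrsim\beta^2$ that makes $\Phi$ equivalent to $\|g\|^2+\alpha\|\nabla g\|^2+\gamma k^2\|g\nabla v\|^2$; combined with the semiclassical bound, the effective dissipation then dominates $C_3\lambda_{\nu,k}\Phi$. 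In the Taylor regime $\nu\ge\beta_0|k|$, the choice simplifies, since the Poincar\'e inequality applied to $-\nu\|\nabla g\|^2$ alone already yields the rate $k^2/\nu$, once the zero mode of $g$ is controlled by Dirichlet data (or by $\|g\nabla v\|^2$ on $\TT^d$).

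The main obstacle is the algebraic bookkeeping of the parameters $(\alpha,\beta,\gamma)$: they must simultaneously ensure coercivity of $\Phi$, absorption of the indefinite cross term and the $\nu$-remainders, and exact matching of the target rate $\lambda_{\nu,k}$. The $m=2$ case is genuinely delicate, since the semiclassical gap is only $\sqrt{\nu}|k|$ rather than the $|k|^2$ available for $m=1$; this is the source of the possible logarithmic correction flagged in Section~\ref{sec1}, which can be removed by refining $\Phi$ with space-dependent coefficients as in \cite{BCZ}.
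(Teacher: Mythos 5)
Your proposal follows essentially the same route as the paper's proof: write $\Phi$ in terms of the four energy balances of Lemma~\ref{lem:enest}, impose the algebraic constraints $\beta^2\lesssim\alpha\gamma$ (coercivity) and $\alpha^2\lesssim\beta\nu$ (absorption of the $\alpha$-cross term), and close the Gr\"onwall estimate with the semi-classical bound \eqref{eq:spectralgap3} applied at $\sigma = \nu/(\beta k^2)$ in the enhanced regime and $\sigma=1$ in the Taylor regime, with the choices $\beta\sim\beta_0/|k|$ and $\beta\sim\beta_1/\nu$ respectively. One small imprecision to flag: the intermediate inequality $\ddt\Phi\le -\tfrac{\nu}{2}\|\nabla g\|^2-\tfrac{\beta}{2}k^2\|g\nabla v\|^2$ cannot be obtained purely by Cauchy--Schwarz against the dissipation reserves, since the remainder coming from $\gamma\nu k^2\l gD^2v\nabla v,\nabla g\r$ leaves a residual of order $\gamma\nu k^2\|g\|^2$ (cf.\ \eqref{aux4} and the bound $5\gamma\nu k^2\|gD^2v\|^2\le c_v\gamma\nu k^2\|g\|^2$ in the paper); this residual itself must be absorbed by the semi-classical inequality before one lands on a clean dissipation term, and that is what fixes the constraint $\gamma\nu k^2\lesssim\sqrt{\beta\nu}\,|k|$. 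You do invoke the semi-classical estimate afterwards, so the ingredients are all present and the argument goes through once the bookkeeping is reordered; your use of Poincar\'e in the Dirichlet Taylor case is a valid shortcut (the paper uses the $\sigma=1$ semi-classical bound uniformly so as to also cover the torus).
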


\begin{remark}\label{rem:torus}
If $\Omega = \TT^d$, it is understood that $\overline{\Omega} = \TT^d$, $\partial
\Omega = \emptyset$, and $H^1_0(\Omega) = H^1(\TT^d)$. In that case, any Morse 
function $v$ has critical points in $\Omega$, so that we necessarily have $m = 2$. 
\end{remark}

The decay rate $\lambda_{\nu,k}$ in \eqref{lamnuk3.1} is essentially the same as
the one in \eqref{lamnuk}, except for the threshold parameter $\beta_0$ which
will be taken smaller than $1$ in the proof of Theorem~\ref{thm:main3.1}. 
However, the main difference with the arguments developed in Section~\ref{sec2}
is the use of the $H^1$-type energy \eqref{eq:hypofun}. A simple argument allows
us to translate estimate \eqref{lamnuk3.1} on the energy functional $\Phi$ into
a semigroup bound of the form \eqref{lamnuk}, as is stated below.

\begin{corollary}\label{cor:main3.1}
Under the assumptions of Theorem \ref{thm:main3.1}, there exist positive 
constants $\beta_0,C_1, C_2$ such that, for all $\nu > 0$, all $k \neq 0$, and all 
initial data $g_0 \in L^2(\Omega)$, the solution of \eqref{eq:geq} satisfies the
estimate 
\begin{equation}\label{lamnuk.cor}
  \|g(k,t)\| \,\le\, C_1\, \left(1+\frac{|k|}{\nu}\right)^\frac{m-1}{m+2}\,
  \e^{-C_2 \lambda_{\nu,k}t}\,\|g_0\|\,, 
\end{equation}
 for all $t \ge 0$, where $\lambda_{\nu,k}$ is given by \eqref{lamnuk3.1}.
\end{corollary}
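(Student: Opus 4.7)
The plan is a standard two-step bootstrap: first exploit the parabolic regularization of \eqref{eq:geq} to upgrade an $L^2$ initial datum to an $H^1_0$ function at a small positive time, then apply Theorem~\ref{thm:main3.1} from that time onward and optimize over the intermediate time.

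For the smoothing step, note that pairing \eqref{eq:Hdef} with $g$ and taking real parts yields, via \eqref{numrange}, the basic identity $\frac12 \ddt \|g\|^2 = -\nu \|\nabla g\|^2$. Integration gives both the contraction $\|g(t)\| \le \|g_0\|$ and, for any $t_0 > 0$, the estimate $\nu \int_0^{t_0} \|\nabla g(s)\|^2\dd s \le \tfrac12 \|g_0\|^2$. A pigeonhole argument on the subinterval $(t_0/2,t_0)$ furnishes some $s_0 \in (t_0/2,t_0)$ with $\|\nabla g(s_0)\|^2 \le \tfrac{1}{\nu t_0}\|g_0\|^2$. Since $-H_{\nu,k}$ generates an analytic semigroup (bounded perturbation of a self-adjoint operator), we also have $g(s_0) \in H^1_0(\Omega)$, so Theorem~\ref{thm:main3.1} may be invoked starting from time $s_0$.

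Next I would exploit the explicit structure of $\Phi$ in \eqref{eq:hypofun}. Bounding the cross term by Cauchy--Schwarz and Young's inequality, the choices of $\alpha,\beta,\gamma$ used in the proof of Theorem~\ref{thm:main3.1} (with $\beta$ sufficiently small so that $\beta^2 \lesssim \alpha\gamma$) yield an equivalence
\[
  c_1\bigl(\|g\|^2 + \alpha \|\nabla g\|^2\bigr) \,\le\, \Phi \,\le\, c_2\bigl(\|g\|^2 + \alpha \|\nabla g\|^2 + \gamma k^2 \|g\|^2\bigr),
\]
where in the upper bound we used $\|g\nabla v\| \le \|\nabla v\|_{L^\infty}\|g\|$. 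Evaluating at $s_0$ and combining with the smoothing bounds produces
\[
  \Phi(s_0) \,\lesssim\, \Bigl(1 + \tfrac{\alpha}{\nu t_0} + \gamma k^2\Bigr)\|g_0\|^2.
\]
Theorem~\ref{thm:main3.1} applied from $s_0$ then gives $\Phi(t) \le \e^{-C_3 \lambda_{\nu,k}(t-s_0)}\Phi(s_0)$, and the lower bound on $\Phi$ yields
\[
  \|g(t)\|^2 \,\lesssim\, \e^{C_3 \lambda_{\nu,k} t_0}\Bigl(1 + \tfrac{\alpha}{\nu t_0} + \gamma k^2\Bigr)\e^{-C_3 \lambda_{\nu,k} t}\|g_0\|^2.
\]

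Finally I would optimize by choosing $t_0 = 1/\lambda_{\nu,k}$, which turns $\e^{C_3\lambda_{\nu,k} t_0}$ into a universal constant. In the Taylor regime $|k| \le \nu/\beta_0$, the quantities $\alpha/(\nu t_0)$ and $\gamma k^2$ are bounded by the scalings fixed in Theorem~\ref{thm:main3.1}, giving a constant prefactor, consistent with $(1+|k|/\nu)^{(m-1)/(m+2)} = O(1)$ there. In the enhanced dissipation regime $\nu \le \beta_0 |k|$, substituting $\lambda_{\nu,k} = \nu^{m/(m+2)}|k|^{2/(m+2)}$ together with the explicit scalings of $\alpha$ and $\gamma$ from the proof of Theorem~\ref{thm:main3.1} yields $\alpha/(\nu t_0) + \gamma k^2 \lesssim (|k|/\nu)^{2(m-1)/(m+2)}$, so that taking square roots produces exactly the prefactor in \eqref{lamnuk.cor}. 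The main---though modest---obstacle is the bookkeeping: one must verify that the parameters $(\alpha,\beta,\gamma)$ chosen inside the proof of Theorem~\ref{thm:main3.1} both satisfy the equivalence needed above and combine under the choice $t_0 = 1/\lambda_{\nu,k}$ to yield precisely the exponent $(m-1)/(m+2)$, with the two regimes matching at $|k| \asymp \nu$.
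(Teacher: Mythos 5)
Your proposal is correct and takes essentially the same route as the paper's proof: use the $L^2$ energy balance together with a mean-value/pigeonhole argument to find an intermediate time at which $\|\nabla g\|^2 \lesssim \lambda_{\nu,k}\nu^{-1}\|g_0\|^2$, apply the $\Phi$-decay of Theorem~\ref{thm:main3.1} from that time onward, and read off the prefactor $(1+|k|/\nu)^{(m-1)/(m+2)}$ from the scalings of $\alpha$ and $\gamma$. The paper simply fixes $t_0 \in (0, T_{\nu,k})$ directly with $T_{\nu,k}=1/\lambda_{\nu,k}$ instead of leaving $t_0$ free and optimizing afterward, which is the same bookkeeping in a slightly different order.
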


\begin{remark}
In the enhanced dissipation regime, when $m\ge 2$, the prefactor appearing in
\eqref{lamnuk.cor} implies a logarithmic correction on the decay rate
$\lambda_{\nu,k}$, as already noticed in \cite{BCZ}. Such a correction is not
present when $m = 1$, namely when there are no critical points. Also, in the
Taylor dispersion case, the prefactor is harmless since
$1 \le 1 + \frac{|k|}{\nu} \le 1 + \beta_0^{-1}$.

In the case of homogeneous Dirichlet boundary conditions, the decay estimate 
\eqref{lamnuk.cor} is not interesting in the Taylor dispersion regime, because
it is in fact weaker than what can be deduced from the simple energy balance 
\eqref{eq:L2balance} in view of the Poincar\'e inequality. 
\end{remark}

The rest of this section is devoted to proving Theorem~\ref{thm:main3.1}. 
 
\subsection{Energy identities}
We start the discussion with some energy identities that will be used to build
the hypocoercivity functional $\Phi$ in \eqref{eq:hypofun}.  Below, we indicate
by $\Delta$ and $\nabla$ the Laplacian and the gradient with respect to the space 
variable $y \in \Omega$. 

\begin{lemma}\label{lem:enest}
Let $g$ solve \eqref{eq:geq} either in the torus $\TT^d$, or in a bounded 
domain $\Omega \subset \RR^d$ with homogeneous Dirichlet boundary conditions. 
Then we have the following balances:
 \begin{align}
  \frac12\ddt \|g\|^2 +\nu\|\nabla g\|^2 \,&=\, 0\,,\label{eq:L2balance} \\
  \frac12\ddt \|\nabla g\|^2+\nu\|\Delta g\|^2 \,&=\, -\Re \l i kg\nabla v,\nabla g \r\,,
  \label{eq:H1balance} \\
  \ddt \Re \l ikg \nabla v ,\nabla g \r+k^2 \| g\nabla v \|^2 \,&=\, -2\nu \Re \l ik 
  \nabla v \cdot\nabla g,\Delta g\r-\nu \Re \l ikg \Delta v ,\Delta g\r\,,
  \label{eq:crossterm}\\
  \frac12\ddt \| g \nabla v \|^2+\nu\|  |\nabla v| \nabla g\|^2 \,&=\,
  -2 \nu  \Re\l g D^2 v \nabla v , \nabla g \r\,.\label{eq:gamma-weight} 
\end{align}
\end{lemma}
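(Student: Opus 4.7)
For each of the four identities, the scheme is the same: take the inner product of \eqref{eq:geq} (or of a differentiated form of it) with an appropriate test function, integrate by parts, and take the real part. The key algebraic input is that $k$ and $v$ are real, so the multiplier $ikv$ is skew-adjoint on $L^2(\Omega)$; any term of the form $\langle ikv u,u\rangle$ is purely imaginary and drops out. Boundary contributions from integration by parts vanish uniformly in both geometries: on $\TT^d$ there is nothing to check, while for homogeneous Dirichlet conditions on a bounded $\Omega$ one has $g|_{\partial\Omega}=0$, hence $\partial_t g|_{\partial\Omega}=0$, and this suffices to cancel every boundary term that arises below.

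For \eqref{eq:L2balance}, I would pair \eqref{eq:geq} with $g$ in $L^2(\Omega)$, integrate the Laplacian by parts, and take the real part: the transport term is imaginary and the diffusion contributes $-\nu\|\nabla g\|^2$. For \eqref{eq:H1balance}, I would first rewrite $\tfrac12\ddt\|\nabla g\|^2=\Re\langle\nabla\partial_t g,\nabla g\rangle=-\Re\langle\partial_t g,\Delta g\rangle$ (the integration by parts is legitimate since $\partial_t g=0$ on $\partial\Omega$), substitute $\partial_t g=-ikvg+\nu\Delta g$, and get a $-\nu\|\Delta g\|^2$ contribution together with $\Re\langle ikvg,\Delta g\rangle$. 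Integrating the latter by parts once more turns it into $-\Re\langle ik\nabla(vg),\nabla g\rangle=-\Re\langle ikg\nabla v,\nabla g\rangle-ik\int v|\nabla g|^2$, and the last term is purely imaginary.

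The identity \eqref{eq:crossterm} is the main technical point. I would differentiate the cross term, distribute the time derivative on both factors, and substitute the equation in each slot:
\begin{equation*}
\ddt\langle ikg\nabla v,\nabla g\rangle = \langle ik\partial_t g\,\nabla v,\nabla g\rangle+\langle ikg\nabla v,\partial_t\nabla g\rangle.
\end{equation*}
The first slot produces, from the $-ikvg$ part of $\partial_t g$, the term $\langle k^2 vg\nabla v,\nabla g\rangle$. The second slot, via $\overline{\partial_t g}=ikv\bar g+\nu\Delta\bar g$ and $\nabla\overline{\partial_t g}=ik\bar g\nabla v+ikv\nabla\bar g+\nu\nabla\Delta\bar g$, produces $-k^2\|g\nabla v\|^2-k^2\int vg\nabla v\cdot\nabla\bar g$ from the transport piece. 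The two $vg\nabla v\cdot\nabla\bar g$ contributions cancel exactly, leaving $-k^2\|g\nabla v\|^2$ on the conservative side. The remaining diffusive terms are $\nu\langle ik\Delta g\,\nabla v,\nabla g\rangle$ and $\nu\int ikg\nabla v\cdot\nabla\Delta\bar g$. Integrating the second one by parts (boundary terms vanish because $g=0$ on $\partial\Omega$) turns it into $-\Re\langle ik\nabla v\cdot\nabla g,\Delta g\rangle-\Re\langle ikg\Delta v,\Delta g\rangle$. Finally, a direct complex-conjugation check shows $\Re\langle ik\Delta g\,\nabla v,\nabla g\rangle=-\Re\langle ik\nabla v\cdot\nabla g,\Delta g\rangle$, which combined with the previous line yields exactly the stated right-hand side.

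For \eqref{eq:gamma-weight}, I would pair \eqref{eq:geq} with $|\nabla v|^2 g$ and take the real part: the advective contribution is again imaginary and vanishes, while the diffusion gives, after one integration by parts, $-\nu\||\nabla v|\nabla g\|^2-\nu\Re\int \bar g\,\nabla g\cdot\nabla|\nabla v|^2$; using $\nabla|\nabla v|^2=2D^2v\,\nabla v$ and noting that $v$ is real (so $\Re\int\bar g\nabla g\cdot D^2v\nabla v=\Re\langle gD^2v\nabla v,\nabla g\rangle$) produces the claimed remainder. The only genuine obstacle in the whole argument is the bookkeeping in \eqref{eq:crossterm}: one must track several cross terms, identify the exact cancellation that produces the conservative factor $k^2\|g\nabla v\|^2$, and correctly combine the two diffusive remainders via complex conjugation. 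Everything else is a routine integration by parts in which boundary terms vanish either by periodicity or by the Dirichlet condition.
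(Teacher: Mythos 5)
Your proposal is correct and follows essentially the same route as the paper's proof: pair the equation (or its gradient) with the appropriate multiplier, integrate by parts, exploit that $ikv$ is skew-adjoint so transport contributions $\langle ikvu,u\rangle$ are purely imaginary, and for \eqref{eq:crossterm} substitute $\partial_t g$ into both slots and identify the exact cancellation of the $k^2\int vg\,\nabla v\cdot\nabla\bar g$ terms together with the complex-conjugation identity $\Re\langle ik\Delta g\,\nabla v,\nabla g\rangle=-\Re\langle ik\nabla v\cdot\nabla g,\Delta g\rangle$. The bookkeeping, sign conventions, and boundary-term cancellations all check out, so the argument matches the paper's in substance.
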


\begin{proof}
All the identities are established by direct computation, using integration by 
parts. There are no boundary terms if $\Omega = \TT^d$, and in the other case the
contributions from the boundary vanish thanks to the homogeneous Dirichlet
conditions. The $L^2$ balance \eqref{eq:L2balance} follows directly by testing
\eqref{eq:geq} with $g$ and using the antisymmetry property $\Re\l i v g,g\r=0$.
Testing \eqref{eq:geq} with $-\Delta g$ we also obtain \eqref{eq:H1balance} by a
simple integration by parts. Turning to \eqref{eq:crossterm}, we use \eqref{eq:geq} 
to compute
\[
  \ddt \Re \l ik g \nabla v ,\nabla g \r
  \,=\,\nu \Re \left[\l i k (\nabla v) \Delta g,\nabla g \r+\l i kg \nabla v ,\nabla 
  \Delta g \r\right] + k^2\Re \left[\l  vg\nabla v,\nabla g \r-\l g\nabla v ,\nabla
  (vg )\r\right].
\]
We treat the $\nu$ term integrating by parts as
\[
  \Re \left[\l i  (\nabla v) \Delta g,\nabla g \r+\l i g \nabla v ,\nabla \Delta g 
  \r\right] \,=\,-2 \Re \l i\nabla v \cdot \nabla g,\Delta g\r-
  \Re\l i g\Delta v , \Delta g \r\,,
\]
while for the second term we compute
\begin{align*}
  \l vg \nabla v ,\nabla g \r-\l g \nabla v ,\nabla  (vg )\r\,=\,-\| g \nabla v \|^2\,,
\end{align*}
and \eqref{eq:crossterm} follows.
For \eqref{eq:gamma-weight}, thanks to antisymmetry we have
\begin{align*}
  \frac12\ddt\| g \nabla v \|^2
  \,&=\, \nu \Re\l g \nabla v, (\nabla v) \Delta g \r - \Re\l g\nabla v, ik v  g \nabla v  
  \r\notag\\ \,&=\,- \nu\| |\nabla v| \nabla g\|^2-2 \nu  \Re\l g D^2 v \nabla v,  
  \nabla g \r\,.
\end{align*}
This concludes the proof. 
\end{proof}

\begin{remark}\label{rem:Neumann}
The relations \eqref{eq:L2balance}, \eqref{eq:H1balance}, and \eqref{eq:gamma-weight} 
remain valid if $g$ satisfies the homogeneous Neumann conditions on $\partial\Omega$, 
but to obtain \eqref{eq:crossterm} one has to assume in addition that the normal 
derivative of the shear velocity $v$ vanishes identically on the boundary. This 
additional hypothesis is not very natural, as it is not satisfied in many classical
examples, such as the cylindrical Poiseuille flow. Moreover, it conflicts with 
our forthcoming assumption that $v$ has no critical points on $\partial\Omega$, 
see Proposition~\ref{prop:spect} below. For these reasons, we prefer assuming in 
this section that $g = 0$ on $\partial\Omega$, or alternatively that $\Omega
 = \TT^d$. 
\end{remark}

For each $k\in\RR$, the (frequency-localized) energy functional $\Phi$ in
\eqref{eq:hypofun} depends on the positive coefficients $\alpha,\beta,\gamma$,
to be chosen depending on $k$ and $\nu$. For the moment, we assume that
\begin{align}\label{eq:constraint1}
  \frac{\beta^2}{\alpha\gamma} \,\le\, \frac{1}{16}\,,
\end{align}
a condition that guarantees the coercivity of $\Phi$. Indeed, since
\[
  2\beta |k||\l g\nabla v ,\nabla g \r| \le 2\beta |k|\|g \nabla v \| \|\nabla g \| \,\le\,
 \frac{\alpha}{4} \|\nabla g \|^2 + \frac{4\beta^2 k^2}{\alpha} \| g\nabla v \|^2 \,\le\,
 \frac{\alpha}{4} \|\nabla g \|^2 + \frac{\gamma k^2}{4} \| g\nabla v \|^2\,,
\]
we obtain
\begin{align}\label{eq:coercivity}
  \frac18 \left[4\|g\|^2+3\alpha\|\nabla g\|^2+3\gamma k^2\|g \nabla v  \|^2\right]
  \,\le\,\Phi\,\le\,\frac18 \left[4\|g\|^2+5\alpha\|\nabla g\|^2+5\gamma k^2\|g \nabla v 
  \|^2\right]\,.
\end{align}
Moreover, Lemma \ref{lem:enest} readily implies that
\begin{align}\label{eq:energy}
  \ddt \Phi+\nu\|\nabla g\|^2 &+\alpha\nu\| \Delta g\|^2+\beta k^2 \| g \nabla v \|^2
  +\gamma\nu k^2\| |\nabla v| \nabla g\|^2\notag\\
  \,&=\, -\alpha \Re\l ik g \nabla v ,\nabla g \r-2\beta\nu \Re \l ik\nabla v \cdot 
  \nabla g,\Delta g\r \\ &\quad\,\, -\beta\nu \Re\l ik g \Delta v ,\Delta g\r-2\gamma\nu k^2 
  \l g D^2 v \nabla v  ,  \nabla g \r\,.\notag
\end{align}
In addition to \eqref{eq:constraint1}, let us assume from now on that 
\begin{align}\label{eq:constraint2}
  \frac{\alpha^2}{\nu} \,\le\,\beta\,.
\end{align}
In this way, the first term on the right-hand side of \eqref{eq:energy} can be estimated as
\begin{equation}\label{aux1}
  \alpha |\l ikg \nabla v ,\nabla g \r|\,\le\, \alpha |k|\| g \nabla v  \| \|\nabla g\|
  \,\le\,  \frac{\nu}{2}\|\nabla g\|^2+  \frac{\alpha^2 k^2}{2\nu} \| g\nabla v \|^2\,\le\,
  \frac{\nu}{2}\|\nabla g\|^2+  \frac{\beta k^2}{2} \| g \nabla v  \|^2\,.
\end{equation}
Moreover, thanks to  \eqref{eq:constraint1} we have
\begin{equation}\label{aux2}
  2\beta\nu |\l ik\nabla v \cdot\nabla g,\Delta g\r| \,\le\, 2\beta\nu |k|\| |\nabla v| 
  \nabla g\| \| \Delta g\| \,\le\, \frac{\alpha \nu}{4}\| \Delta g\|^2+ \frac{\gamma \nu 
  k^2}{4}\|  |\nabla v| \nabla g\|^2\,,
\end{equation}
and 
\begin{equation}\label{aux3}
  \beta\nu|\l ikg \Delta v,\Delta g\r| \,\le\, \beta\nu |k| \| g \Delta v\| \| \Delta g\|
  \,\le\, \frac{\alpha \nu}{4}\| \Delta g\|^2+ \frac{\gamma \nu k^2}{16}\| g\Delta v\|^2.
\end{equation}
Finally, we have
\begin{equation}\label{aux4}
  2\gamma\nu k^2 |\l g D^2 v \nabla v  ,  \nabla g \r|\,\le\, \frac{\gamma \nu k^2}{4}\| 
  |\nabla v| \nabla g\|^2+4\gamma \nu k^2\|g D^2 v \|^2\,.
\end{equation}
Combining \eqref{eq:energy} and \eqref{aux1}--\eqref{aux4}, we arrive at
\begin{equation}\label{eq:energyest1}
  \ddt \Phi+\frac{\nu}{2}\|\nabla g\|^2+\frac{\alpha\nu}{2}\| \Delta g\|^2+\frac{\beta 
  k^2}{2} \| g \nabla v \|^2+\frac{\gamma\nu k^2}{2}\|  |\nabla v| \nabla g\|^2
  \,\le\, 5 \gamma\nu k^2\| g D^2v  \|^2.
\end{equation}
In order to obtain a differential inequality for the functional $\Phi$, it remains to
bound the remainder term in the right-hand side of \eqref{eq:energyest1}, and to 
show that $\Phi$ itself can be controlled using the positive terms in the left-hand side. 

\subsection{A semi-classical estimate}
One of the key elements of the proof via hypocoercivity is the following
inequality, which we state in a fairly general way.

\begin{proposition}\label{prop:spect}
Let $w:\Omega\to \RR$ be a smooth function such that 
\begin{enumerate} [label=(H\arabic*), ref=(H\arabic*)]
\item \label{hyp1} $w\in C^1(\overline{\Omega})$, and $\nabla w \neq 0$ on $\de\Omega$;
\item  \label{hyp2} $w$ has a finite number of critical points $y_1,\ldots, y_N$ in 
$\Omega$. For $1\le j\le N$ there exist a radius $R_j\in(0,1)$, an integer $m_j\ge 2$, 
and a constant $C \ge 1$ such that
\begin{align}\label{nablawest}
  \frac{1}{C}|y-y_j|^{m_j-1}\,\le\, |\nabla w(y)|\,\le\, C |y-y_j|^{m_j-1}, \qquad \forall\,
  y\in B(y_j, R_j)\,,
\end{align}
where $B(y_j, R_j)$ denotes the ball of radius $R_j$ centered at $y_j$. 
\end{enumerate}
Then there exists a constant $C_{sp}\ge 1$ such that, for all $\sigma\in (0,1]$ and 
all $\varphi\in H^1(\Omega)$, the following inequality holds
\begin{align}\label{eq:spectralgap3}
  \sigma^{\frac{m-1}{m}}\|\varphi\|^2\,\le\, C_{sp}\left[\sigma\|\nabla \varphi\|^2 +\| \varphi 
  \nabla w \|^2\right]\,,
\end{align}
where $m = \max\{m_1,\ldots,m_N\}$ if $N \ge 1$ and $m=1$ if $w$ has no critical 
point in $\Omega$. 
\end{proposition}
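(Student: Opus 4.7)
The plan is to localize the estimate via a smooth, $\sigma$-independent partition of unity adapted to the critical points of $w$, and to handle separately the ``bulk'' region where $|\nabla w|$ is bounded away from zero, and a small ball around each critical point $y_j$, where the natural semi-classical length scale $r_j := \sigma^{1/(2m_j)}$ balances the two terms of $-\sigma\Delta + |\nabla w|^2$.

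For the bulk region, hypotheses \ref{hyp1}--\ref{hyp2} together with \eqref{nablawest} yield a constant $c_0 > 0$ such that $|\nabla w(y)| \ge c_0$ whenever $\dist(y,\{y_1,\dots,y_N\}) \ge R_*/4$, with $R_* := \min_j R_j$. On this set, $\|\varphi\nabla w\|^2 \ge c_0^2\|\varphi\|^2$ directly, and since $\sigma \le 1$ and $(m-1)/m \ge 0$, this gives the desired bound with the exponent $(m-1)/m$ on the left.

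Near each critical point, after translating so $y_j = 0$, I would split $B(0,R_j)$ into a core $B(0,r_j)$ and the annulus $A_j := B(0,R_j)\setminus B(0,r_j)$, provided $r_j \le R_j/2$ (the complementary range $\sigma \ge (R_j/2)^{2m_j}$ is bounded below and can be handled by a soft compactness argument using \ref{hyp1}). On $A_j$, \eqref{nablawest} gives $|\nabla w(y)|^2 \ge C^{-2} r_j^{2(m_j-1)} = C^{-2}\sigma^{(m_j-1)/m_j}$, hence
\[
  \sigma^{(m_j-1)/m_j}\|\varphi\|_{L^2(A_j)}^2 \,\le\, C^2\|\varphi\nabla w\|_{L^2(A_j)}^2.
\]
On the core, I would invoke a standard Poincar\'e-type localization inequality (provable by writing $\varphi(x) - \varphi(y) = -\int_0^1 \nabla\varphi(y + t(x-y))\cdot(x-y)\,dt$ and averaging $y$ over the surrounding annulus, or equivalently via Poincar\'e--Wirtinger on $B(0,2r_j)$)
\[
  \|\varphi\|_{L^2(B(0,r_j))}^2 \,\le\, C\bigl(r_j^2\|\nabla\varphi\|_{L^2(B(0,2r_j))}^2 + \|\varphi\|_{L^2(B(0,2r_j)\setminus B(0,r_j))}^2\bigr).
\]
Multiplying by $\sigma^{(m_j-1)/m_j}$ and using the key identity $r_j^2\cdot\sigma^{(m_j-1)/m_j} = \sigma$, combined with the annulus estimate above, yields
\[
  \sigma^{(m_j-1)/m_j}\|\varphi\|_{L^2(B(0,R_j))}^2 \,\le\, C\bigl(\sigma\|\nabla\varphi\|^2 + \|\varphi\nabla w\|^2\bigr).
\]
Since $m_j \le m$ and $\sigma \in (0,1]$, we have $\sigma^{(m-1)/m} \le \sigma^{(m_j-1)/m_j}$, so the same inequality holds with the slower exponent $(m-1)/m$ on the left, uniformly in $j$.

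Finally, I would assemble the pieces via a smooth $\sigma$-independent partition of unity $\{\chi_j\}_{j=0}^N$ with $\sum \chi_j^2 = 1$, $\chi_j$ supported near $y_j$ for $j\ge 1$, and $\chi_0$ supported in the bulk. The commutator terms $|\nabla(\chi_j\varphi)|^2 \le 2\chi_j^2|\nabla\varphi|^2 + 2|\nabla\chi_j|^2|\varphi|^2$ contribute $\OO(\|\varphi\|^2)$, which is absorbed by the bulk estimate because $\sigma^{(m-1)/m} \le 1$. I expect the main obstacle to be the careful decoupling of the fixed partition-of-unity scale $R_j$ from the shrinking semi-classical scale $r_j \to 0$, and in particular ensuring that the core Poincar\'e-type inequality has a constant independent of $r_j$ and of the degeneracy $m_j$, so that the final constant $C_{sp}$ does not degenerate as $\sigma\to 0$.
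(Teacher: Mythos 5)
Your argument is correct, and it takes a genuinely different route from the paper's. The paper proves a self-contained interpolation inequality (Lemma~\ref{lem:radialspect}): for $\varphi\in H^1_0(B_R)$ and $\ell\in\NN$, $\|\varphi\|\le C\|\nabla\varphi\|^{\ell/(\ell+1)}\||y|^\ell\varphi\|^{1/(\ell+1)}$, established by an integration by parts for radial functions followed by a decomposition in spherical harmonics; it then applies this to $\chi_j\varphi$ within a partition of unity $\sum\chi_j^2=1$ and controls the commutator $\varphi\nabla\chi_j$ by a term $c_2\sigma\|\varphi\|^2$ absorbed for small $\sigma$. You instead work directly with $\varphi$ on the overlapping cover of $\Omega$ by the bulk region and the balls $B(y_j,R_j)$, introducing explicitly the semi-classical scale $r_j=\sigma^{1/(2m_j)}$, bounding $|\nabla w|$ from below on the annulus $B(y_j,R_j)\setminus B(y_j,r_j)$ and using a scale-invariant Poincar\'e--Wirtinger estimate on the core $B(y_j,r_j)$ together with the identity $r_j^2\sigma^{(m_j-1)/m_j}=\sigma$. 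Your route is more elementary (no spherical harmonics, no interpolation lemma) and makes the balance of scales transparent; the paper's route is more compact since Lemma~\ref{lem:radialspect} encodes the two-scale matching in a single inequality. One minor point worth cleaning up: your final step, where you reintroduce a partition of unity $\{\chi_j\}$ and estimate commutator terms $|\nabla\chi_j|^2|\varphi|^2$, is in fact unnecessary in your setup, because your bulk and ball estimates are stated for $\varphi$ itself (not $\chi_j\varphi$), so summing the local estimates over the finite overlapping cover closes the argument with no commutators at all. Like the paper, you still need a separate (but easy) argument when $\sigma$ is bounded away from zero, which you correctly flag; the paper handles this by a monotonicity/rescaling observation rather than compactness, but either works.
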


Estimate \eqref{eq:spectralgap3} means that the ground state of the Schr\"odinger 
operator $-\sigma\Delta + |\nabla w|^2$ in $L^2(\Omega)$ is bounded from below by
$C_{sp}^{-1}\,\sigma^{\frac{m-1}{m}}$ in the semi-classical limit $\sigma \to 0$. 
There is of course an abundant literature on spectral bounds for semi-classical
Schr\"odinger operators. The background material can be found in the excellent
references \cites{HS1,HN}, where estimate \eqref{eq:spectralgap3} is established
at least in some special cases. For the sake of completeness, we give below a short 
proof of \eqref{eq:spectralgap3} under our general assumptions. 

\medskip
The proof of Proposition \ref{prop:spect} relies on the following lemma.

\begin{lemma}\label{lem:radialspect}
Let $R > 0$ and denote by $B_R\subset \RR^d$ the ball of radius $R$ centered at $0$. 
If $\varphi\in H^1_0(B_R)$ and $\ell\in \NN$, then 
\begin{align}\label{eq:spectralgap2}
  \| \varphi\|_{L^2(B_R)} \,\le\, C\| \nabla \varphi\|^{\frac{\ell}{\ell+1}}_{L^2(B_R)} \| 
  |y|^\ell \varphi\|^\frac{1}{\ell +1}_{L^2(B_R)}\,,
\end{align}
where the constant $C > 0$ depends only on $d$ and $\ell$.
\end{lemma}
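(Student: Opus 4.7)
My plan is to establish the base case $\ell = 1$ from a divergence identity, and then bootstrap to arbitrary $\ell \ge 2$ by H\"older's inequality; the case $\ell = 0$ is trivial since \eqref{eq:spectralgap2} then reduces to $\|\varphi\| \le C\|\varphi\|$.

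For the base case, I would exploit the pointwise identity $\nabla \cdot (y|\varphi|^2) = d|\varphi|^2 + 2\,\Re(y\cdot \overline{\varphi}\nabla\varphi)$ on $B_R$. Integrating and applying the divergence theorem, the boundary contribution vanishes for $\varphi \in C^\infty_c(B_R)$, and by the density of $C^\infty_c(B_R)$ in $H^1_0(B_R)$ the identity
\begin{equation*}
d\,\|\varphi\|_{L^2(B_R)}^2 \,=\, -2\,\Re\int_{B_R} y \cdot \overline{\varphi}\nabla\varphi\,\dd y
\end{equation*}
extends to every $\varphi \in H^1_0(B_R)$. Cauchy--Schwarz then yields $\|\varphi\|_{L^2}^2 \le (2/d)\,\||y|\varphi\|_{L^2}\,\|\nabla\varphi\|_{L^2}$, which is \eqref{eq:spectralgap2} for $\ell = 1$ with $C = \sqrt{2/d}$.

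For $\ell \ge 2$, I would interpolate the weight $|y|$ between the endpoints $1$ and $|y|^\ell$. Writing $|y|^2|\varphi|^2 = |\varphi|^{2(1-1/\ell)}\bigl(|y|^\ell |\varphi|\bigr)^{2/\ell}$ and applying H\"older with conjugate exponents $p = \ell/(\ell-1)$ and $q = \ell$ (forced by the scaling) gives
\begin{equation*}
\||y|\varphi\|_{L^2}^2 \,\le\, \|\varphi\|_{L^2}^{2(\ell-1)/\ell}\,\||y|^\ell\varphi\|_{L^2}^{2/\ell}.
\end{equation*}
Substituting this bound into the $\ell = 1$ estimate and rearranging the resulting inequality yields
\begin{equation*}
\|\varphi\|_{L^2}^{(\ell+1)/\ell} \,\le\, \frac{2}{d}\,\|\nabla\varphi\|_{L^2}\,\||y|^\ell\varphi\|_{L^2}^{1/\ell},
\end{equation*}
which, upon raising to the power $\ell/(\ell+1)$, is precisely \eqref{eq:spectralgap2} with a constant depending only on $d$ and $\ell$.

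There is no real obstacle here; the argument is a clean, scale-invariant interpolation. The only items that require a moment's care are the correct choice of H\"older exponents (dictated by demanding that the weight $|y|^{2/(1-\theta)}$ coincide with $|y|^{2\ell}$, forcing $\theta = 1 - 1/\ell$) and the justification that the divergence-theorem boundary integral vanishes, which is the sole place where the Dirichlet condition $\varphi \in H^1_0(B_R)$ enters.
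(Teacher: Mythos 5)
Your argument is correct, and every step checks: the divergence identity $\nabla\cdot(y|\varphi|^2) = d|\varphi|^2 + 2\Re(y\cdot\overline{\varphi}\nabla\varphi)$, the vanishing of the boundary term by density of $C^\infty_c(B_R)$ in $H^1_0(B_R)$, the Cauchy--Schwarz step for $\ell=1$, and the H\"older interpolation of the weight for $\ell\ge 2$ all hold, producing the stated estimate with the explicit constant $(2/d)^{\ell/(\ell+1)}$.

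Your route differs from the paper's in two respects. First, the paper reduces to \emph{radially symmetric} $\varphi$ by a spherical-harmonics decomposition (the orthogonality of $\|\varphi_n\|$, $\|\varphi_n'\|$, $\||y|^\ell\varphi_n\|$ followed by a discrete H\"older), and then performs a one-dimensional integration by parts against $r^{d-1}\,\dd r$; your divergence-theorem identity is exactly the multidimensional version of that integration by parts and dispenses with the spherical-harmonics reduction altogether, which is a small but genuine simplification. Second, the paper applies a three-factor H\"older inequality with exponents $(2, 2\ell, 2\ell/(\ell-1))$ in one shot, whereas you first prove the $\ell=1$ case (two-factor Cauchy--Schwarz) and then interpolate $\||y|\varphi\|$ between $\|\varphi\|$ and $\||y|^\ell\varphi\|$; algebraically these are the same inequality, so your ``base case plus bootstrap'' reorganization buys conceptual clarity rather than any new estimate. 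Both arguments give the identical constant $(2/d)^{\ell/(\ell+1)}$.
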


\begin{proof}
There is nothing to prove if $\ell = 0$, so we assume henceforth that $\ell \ge 1$. 
Passing to polar coordinates and decomposing $\varphi$ in spherical harmonics 
as in the proof of Lemma~\ref{lem:annulus} below, we see that it is sufficient 
to prove \eqref{eq:spectralgap2} in the particular case where $\varphi$ is
{\em radially symmetric}. Under that assumption, we can integrate by parts 
and obtain
\[
  \| \varphi\|^2 \,=\, A_d\int_0^R \varphi(r)^2 r^{d-1}\dd r \,=\, -\frac{2 A_d}{d}
  \int_0^R r\varphi(r)\varphi'(r) r^{d-1} \dd r\,,
\]
where $A_d = 2\pi^{d/2}\Gamma(d/2)^{-1}$ is the area of the unit sphere in 
$\RR^d$. By H\"older's inequality we then find
\[
  A_d \int_0^R r|\varphi(r)||\varphi'(r)| r^{d-1} \dd r \,\le\,  \|\varphi'\|
 \|r^\ell\varphi\|^\frac{1}{\ell} \|\varphi\|^{1-\frac1\ell}\,,
\]
hence
\[
  \| \varphi\|^2 \,\le\, \frac{2}{d}\|\varphi'\| \| r^\ell \varphi\|^\frac{1}{\ell} 
  \|\varphi\|^{1-\frac1\ell}\,.
\]
This gives the desired inequality \eqref{eq:spectralgap2} for a radially symmetric
$\varphi$, and the general case follows. 
\end{proof}

We are now ready to prove the semi-classical estimate \eqref{eq:spectralgap3}.

\begin{proof}[Proof of Proposition \ref{prop:spect}]
For definiteness we consider the case of a bounded domain $\Omega \subset \RR^d$; 
the argument is similar in the periodic case.  In Assumption~\ref{hyp2}, we
suppose without loss of generality that the balls $B(y_j,R_j)$ are pairwise
disjoint. For each $j\in\{1,\ldots,N\}$, let $\chi_j$ be a smooth cut-off
function such that $\supp(\chi_j) \subset B(y_j,R_j)$ and $\chi_j = 1$ on
$B(y_j,R_j/2)$. We can also assume that there exists a smooth function $\chi_0$
such that
\[
  \chi_0(y)^2 + \sum_{j=1}^N \chi_j(y)^2 \,=\, 1\,, \qquad \hbox{for all }
  y \in \RR^d\,,
\]
so that the family $\{\chi_j^2\}$ for $j=0,\dots,N$ is a smooth partition of unity. 

Fix $\varphi \in H^1(\Omega)$. For any $j \in \{1,\dots,N\}$ we have 
$\varphi \chi_j\in H^1_0(B(y_j,R_j))$, so using assumption \eqref{nablawest} 
and Lemma~\ref{lem:radialspect} with $\ell = m_j-1$, we obtain
\[
  \| \varphi \chi_j \|_{L^2(B(y_j,R_j))} \,\le\, C \| \nabla (\varphi \chi_j)\|^{
  \frac{m_j-1}{m_j}}_{L^2(B(y_j,R_j))} \|  \varphi \chi_j\nabla w\|^\frac{1}{m_j}_{
  L^2(B(y_j,R_j))}\,, \qquad j \ge 1\,.
\]
Equivalently, by Young's inequality, there exists a constant $c_1 > 0$ such that
\begin{equation}\label{phichi}
  \sigma^{\frac{m_j-1}{m_j}}\|\varphi \chi_j \|^2\,\le\, c_1\left[\sigma\|\nabla (\varphi 
  \chi_j )\|^2 +\|  \varphi \chi_j \nabla w\|^2\right]\,, \qquad j\ge 1\,.
\end{equation}
We remark that inequality \eqref{phichi} is also satisfied when $j = 0$, with 
$m_0 = 1$, since by construction the function $|\nabla w|$ is bounded away from 
zero on the support of $\varphi \chi_0$. Taking this observation into account, 
we can compute
\begin{align*}
  \sigma\|\nabla \varphi \|^2 +\| \varphi\nabla w  \|^2
  \,&=\,\sum_{j\ge 0} \sigma\| \chi_j \nabla \varphi \|^2 +\| \varphi \chi_j \nabla w \|^2\\
  \,&=\,\sum_{j\ge 0} \sigma\|  \nabla (\varphi \chi_j)  -\varphi \nabla \chi_j\|^2 +\| 
  \varphi \chi_j  \nabla w \|^2\\
  \,&=\, \sum_{j\ge 0} \sigma\|  \nabla (\varphi \chi_j) \|^2 +\| \varphi \chi_j  
  \nabla w\|^2 -2\sigma \l  \nabla (\varphi \chi_j),\varphi \nabla \chi_j\r+\sigma
  \|\varphi \nabla \chi_j\|^2\\
  \,&\ge\, \frac12\sum_{j\ge 0} \left[\sigma\|  \nabla (\varphi \chi_j) \|^2 +\| \varphi 
  \chi_j\nabla w  \|^2\right] -c_2\sigma\|\varphi \|^2\\
  \,&\ge\, \frac{1}{2c_1}  \sum_{j\ge 0}\sigma^{\frac{m_j-1}{m_j}} \|\varphi \chi_j \|^2-c_2
  \sigma\|\varphi \|^2\\
  \,&=\, \frac{1}{2c_1}  \sigma^{\frac{m-1}{m}} \|\varphi \|^2-c_2\sigma\|\varphi \|^2
  \,\ge\, \frac{1}{4c_1}  \sigma^{\frac{m-1}{m}} \|\varphi \|^2\,,
\end{align*}
provided $\sigma\in (0,\sigma_0]$ for some $\sigma_0$ small enough. A simple
rescaling of $\sigma$ then implies \eqref{eq:spectralgap3}, hence concluding the
proof.
\end{proof}

\subsection{Velocity profiles with simple critical points}
If $v$ is a Morse function, namely if all critical points of $v$ are
nondegenerate, the strategy is to estimate the right-hand side of
\eqref{eq:energyest1} using the smoothness of $v$, and then to apply inequality
\eqref{eq:spectralgap3} with $m=2$ (or $m = 1$ if $v$ has no critical
point). For the sake of clarity, we concentrate here on the harder case $m = 2$.

\begin{proof}[Proof of Theorem \ref{thm:main3.1}, case $m=2$]
Our starting point is inequality \eqref{eq:energyest1}. Since $v\in C^2(
\overline{\Omega})$, there exists a positive constant $c_v = c(\|v\|_{W^{2,\infty}})$ 
such that $\|D^2 v\|_{L^\infty} \le c_v/5$, hence 
\[
  \ddt \Phi+\frac{\nu}{2}\|\nabla g\|^2+\frac{\beta k^2}{2} \| g\nabla v \|^2
  \,\le\, c_v \gamma\nu k^2\|  g\|^2\,.
\]
We need to show that the right-hand side can be absorbed in the left-hand side
thanks to inequality \eqref{eq:spectralgap3} and to a suitable choice of the
parameters $\alpha,\beta,\gamma$, in compliance with \eqref{eq:constraint1} and
\eqref{eq:constraint2}. Assuming for the moment that such a choice can be made,
so that
\begin{equation}\label{eq:spectralgap12}
  c_v \gamma\nu k^2\| g\|^2\,\le\, \frac{\nu}{4}\|\nabla g\|^2+\frac{\beta k^2}{4}
  \|g\nabla v \|^2\,,
\end{equation}
we then find
\[
  \ddt \Phi+\frac{\nu}{4}\|\nabla g\|^2+\frac{\beta k^2}{4}\|g \nabla v \|^2\,\le\, 0\,.
\]
Using \eqref{eq:spectralgap12} once more, we obtain
\[
  \ddt \Phi+\frac{c_v \gamma\nu k^2}{2}\| g\|^2+\frac{\nu}{8}\|\nabla g\|^2+\frac{
  \beta k^2}{8}\|g\nabla v \|^2\,\le\, 0\,,
\]
or equivalently
\begin{equation}\label{eq:energyest3}
  \ddt \Phi+\frac{c_v \gamma\nu k^2}{8} \left[4\|g\|^2+\frac{1}{5c_v\alpha \gamma k^2} 
  5\alpha\|\nabla g\|^2+\frac{\beta}{5c_v \gamma^2\nu k^2} 5\gamma k^2\|g\nabla v \|^2
  \right]\,\le\, 0\,.
\end{equation}
In order to fulfill \eqref{eq:constraint1} and \eqref{eq:constraint2} with an
equality, we make the choices
\begin{equation}\label{eq:choice}
  \alpha^2 \,=\, \beta \nu\,,\qquad \gamma \,=\,\frac{16\beta^{3/2}}{\nu^{1/2}}\,,
\end{equation}
and rewrite \eqref{eq:energyest3} as
\begin{equation}\label{eq:energyest4}
  \ddt \Phi+2 c_v \beta^{3/2} \nu^{1/2} k^2 \left[4\|g\|^2+\frac{1}{80 c_v\beta^2 k^2} 
  5\alpha\|\nabla g\|^2+\frac{1}{1280 c_v \beta^2 k^2} 5\gamma k^2\|g\nabla v \|^2
  \right]\,\le\, 0\,.
\end{equation}
We now consider two complementary regimes, verify \eqref{eq:spectralgap12} and
close a proper Gronwall estimate for $\Phi$.

\medskip

\noindent $\diamond$ \emph{Enhanced dissipation}. For some $\beta_0\in (0,1)$ to be fixed 
and independent of $\nu,k$, we take
\begin{equation}\label{eq:beta}
  \beta \,=\, \frac{\beta_0}{|k|}\,, \quad \hbox{and we assume}\quad
  \frac{\nu}{ |k|}\,\le\, \beta_0\,.
\end{equation}
To verify \eqref{eq:spectralgap12}, we use inequality \eqref{eq:spectralgap3} with 
$w = v$, $m = 2$, and
\[
  \sigma \,=\, \frac{\nu}{\beta k^2}\,\le\, 1\,.
\]
We thus obtain
\begin{equation}\label{eq:inequality1}
  \beta^{1/2}\nu^{1/2}|k|\|g\|^2\,\le\, C_{sp}\left[\nu\|\nabla g \|^2 +\beta k^2 
  \| g\nabla v  \|^2\right]\,.
\end{equation}
It follows that inequality \eqref{eq:spectralgap12} is verified provided
\[
  c_v \gamma\nu k^2 \,\le\, \frac{\beta^{1/2}\nu^{1/2}|k|}{4C_{sp}}\,.
\]
In view of \eqref{eq:choice} and \eqref{eq:beta}, this is equivalent to
$\beta_0 \le (64\,c_v C_{sp})^{-1}$, which is simply requiring $\beta_0$ to be 
small enough. Going back to \eqref{eq:energyest4} and possibly reducing $\beta_0$ 
further so that $1280\,c_v \beta_0^2 \le 1$, we find from the coercivity of $\Phi$ 
in \eqref{eq:coercivity} that
\begin{align}\label{eq:finalphi1}
  \ddt \Phi + 16\,c_v \beta_0^{3/2}  \nu^{1/2}|k|^{1/2}\Phi\,\le\, 0\,,
\end{align}
which immediately implies \eqref{lamnuk3.1}. 

\medskip

\noindent $\diamond$ \emph{Taylor dispersion}.
If $\nu |k|^{-1}\ge \beta_0$, we take
\begin{align}\label{eq:beta:bis}
  \beta \,=\, \frac{\beta_1}{\nu}\,,
\end{align}
for some $\beta_1\in (0,\beta_0^2]$ to be fixed and independent of $\nu,k$.
Now, choosing $\sigma=1$ in \eqref{eq:spectralgap3} and using the assumption that 
$\beta_1\le \beta_0^2$, we find
\[
  \beta k^2\| g\|^2\,\le\, C_{sp}\left[\nu\|\nabla g\|^2+ \beta k^2\|g\nabla v \|^2
  \right]\,.
\]
It follows that inequality \eqref{eq:spectralgap12} is verified provided
\[
  c_v \gamma\nu k^2 \,\le\, \frac{\beta k^2}{4C_{sp}}\,.
\]
From \eqref{eq:choice} and \eqref{eq:beta:bis}, this is equivalent to
$\beta_1^{1/2} \le (64\,c_v C_{sp})^{-1}$. Going back to \eqref{eq:energyest4}, we 
possibly reduce $\beta_1$ further so that $1280\,c_v \beta_1^2 \le \beta_0^2$.  
From the coercivity of $\Phi$ in \eqref{eq:coercivity}, we then  find 
\[
  \ddt \Phi + 16\,c_v \beta_1^{3/2}\frac{k^2}{\nu}\Phi\,\le\, 0\,.
\]
and the proof is now complete. 
\end{proof}

\begin{remark}
In the simpler case $m=1$, the only difference is the scaling of
$\alpha,\beta,\gamma$ with respect to $\nu,k$, in the enhanced dissipation
regime.  Specifically, the choice of $\beta$ in \eqref{eq:beta} has to be
changed to
\begin{align}\label{eq:beta:tris}
  \beta=\beta_0\frac{\nu^{1/3}}{|k|^{4/3}}\,.
\end{align}
In fact, an even simpler proof can be carried out, without the use of the term
multiplied by $\gamma$ in \eqref{eq:hypofun}. See \cite{CZ20} for a proof in the
one dimensional case.
\end{remark}

\begin{remark}\label{rem:TayHypo}
In the Taylor dispersion regime, we really only used that $v$ is twice
continuously differentiable and not identically constant. Indeed, all
we need is that inequality \eqref{eq:spectralgap3} holds when $\sigma=1$, 
and this does not require any particular structure. Therefore, the above 
argument also gives an alternative proof of Theorem~\ref{thm:main3} under 
slightly more restrictive regularity assumptions on $v$.
\end{remark}

It remains to give a proof of the semigroup estimate \eqref{lamnuk.cor}. The 
argument follows essentially that of \cites{CZEW,CZP}, and we give the details 
here for completeness.

\begin{proof}[Proof of  Corollary \ref{cor:main3.1}.]
We first consider the enhanced dissipation regime. Let $T_{\nu,k} = 1/\lambda_{\nu,k}$ be 
the relevant time scale. From the energy balance \eqref{eq:L2balance} and the 
mean-value theorem, we may find a time $t_0\in (0,T_{\nu,k})$ such that
\[
  2 \| \nabla g(t_0)\|^2 \,\le\, \frac{\lambda_{\nu,k}}{\nu}\| g_0\|^2 \,=\, 
 \left(\frac{|k|}{\nu}\right)^{\frac{2}{m+2}} \|g_0\|^{2}\,.
\]
In turn, from the definition of $\alpha, \beta,\gamma$ in \eqref{eq:choice} and
\eqref{eq:beta} (or in \eqref{eq:beta:tris} for $m=1$), the above inequality can
be rewritten as
\begin{align}\label{eq:t0esti1}
  \alpha\| \nabla g(t_0)\|^2 \,\le\, \frac{\beta_0^{1/2}}{2} \| g_0\|^2\,.
\end{align}
Since $\nabla v$ is bounded on $\Omega$ and $t\mapsto \|g(t)\|^2$ is decreasing
by \eqref{eq:L2balance}, we infer from \eqref{eq:coercivity} that
\begin{align}\label{eq:t0estimate}
  \Phi(t_0) \,\le\, \frac18 \left[4\|g(t_0)\|^2+5 \alpha \|\nabla g (t_0) \|^2
  + 5\gamma k^2\|g(t_0)\nabla v \|^2\right]\,\le\, K_0(1+\gamma k^2) \| g_0\|^{2}\,,
\end{align}
for some constant $K_0 > 0$ which is independent on $\nu,k$. Hence, for any 
$t\ge T_{\nu,k}$, the differential inequality \eqref{eq:finalphi1} implies that
\begin{align}\label{Phiend}
  \frac{1}{2} \| g(t)\|^2 \,\le\, \Phi(t)\,\le\, \e^{-16 c_v \beta_0^{3/2}  \lambda_{\nu,k}  (t-t_0)}
  \Phi(t_0)\,\le\,  K_0 (1+\gamma k^2)\,\e^{16 c_v \beta_0^{3/2}}    \e^{-16 c_v \beta_0^{3/2}  
  \lambda_{\nu,k}   t} \| g_0\|^{2}\,.
\end{align}
In terms of powers of $\nu$ and $k$, we have that
\[
   \beta \,\sim\, \frac{\nu^\frac{2-m}{m+2}}{|k|^\frac{4}{m+2}}, \qquad  \gamma \,\sim\, 
   \frac{\nu^\frac{2(1-m)}{m+2}}{|k|^\frac{6}{m+2}} \qquad \Longrightarrow \qquad \gamma k^2
  \,\sim\,  \left(\frac{|k|}{\nu}\right)^{\frac{2(m-1)}{m+2}}\,,
\]
so that \eqref{Phiend} gives the semigroup estimate \eqref{lamnuk.cor} for $t\ge T_{\nu,k}$. 
Since inequality \eqref{lamnuk.cor} is trivially satisfied when $t<T_{\nu,k}$, the 
proof is complete in the enhanced dissipation regime. 

In the Taylor dispersion regime, the argument is analogous, and in fact more 
elementary due to the simple scaling of $\alpha, \beta,\gamma$. Indeed, since 
$\alpha\sim 1$ and $\gamma\sim \nu^{-2}$, we only have to replace \eqref{eq:t0esti1} 
and \eqref{eq:t0estimate} by
\begin{align}\label{eq:t0estimate2}
  \alpha\| \nabla g(t_0)\|^2\,\le\, \frac{\beta_1^{1/2}}{2} \left(\frac{k}{\nu}
  \right)^{\!\!2}\|g_0\|^{2}, \qquad \Phi(t_0)\,\le\, K_0\biggl(1+ \left(
  \frac{k}{\nu}\right)^{\!\!2}\,\biggr)\|g_0\|^{2}\,,
\end{align}
respectively. The rest of the argument is exactly the same as before, and leads to 
\eqref{lamnuk.cor}.  
\end{proof}

\section{Conclusions}\label{sec4}

The results of this paper emphasize the link between enhanced dissipation and
Taylor dispersion in parallel shear flows, thereby demonstrating that these
phenomena, which have a common origin, can be analyzed using the same
mathematical tools. In the Taylor dispersion regime, an optimal decay estimate
is obtained under very general assumptions on the shear velocity $v$, see
Theorem~\ref{thm:main3}, but our approach does not give the classical formula
for the effective diffusion constant in the asymptotic regime where
$|k|\ll \nu$. That formula can be established using homogenization theory
\cites{MK,PS}, see also \cite{BCW} for a rigorous proof based on center manifold
theory. In the enhanced dissipation regime, which requires more precise
assumptions, we obtain (to our knowledge) the first general result concerning 
the higher-dimensional case, see Theorem~\ref{thm:main2}. For simplicity, 
we suppose that the shear velocity is a Morse function, but it is clear 
that our methods can be extended to more general situations, some examples
of which are given in Section~\ref{ssec23}. When the cross-section of the 
domain is one-dimensional, we only need to suppose that the critical points
of $v$ are nondegenerate in the sense of \eqref{eq:vder}, and we recover the 
main conclusions of the previous works \cites{BCZ,GGN}. 

A notable feature of our analysis is to provide for our main results two different
proofs, which are based either on $L^2$ resolvent estimates (Section~\ref{sec2})
or on $H^1$ energy estimates (Section~\ref{sec3}).  Both methods have their own
advantages and drawbacks, and it is worth drawing a little summary at this
point.

\smallskip i) The first approach, based on resolvent estimates for the generator
of the linear evolution equation \eqref{eq:geq}, is very general. It can be used
even if the shear velocity is not smooth, see \cite{Wei}, and it is relatively
insensitive to the choice of the boundary conditions. Thanks to the semigroup
bounds recently obtained in \cites{HS2,Wei}, it gives optimal decay estimates
for the solutions of \eqref{eq:geq} in $L^2(\Omega)$, without the logarithmic
corrections originating from the hypocoercivity method \cite{BCZ}. It relies
entirely on standard techniques for the analysis of linear partial differential
operators, and can therefore be applied to higher-order dissipative operators,
involving for instance the bilaplacian. However, when the cross-section of our
domain has dimension $d \ge 2$, Assumption~\ref{Hypv} on the level sets of
the shear velocity $v$ is not easy to verify, and this is why we restrict
ourselves to the relatively simple case of Morse functions.

\smallskip ii) The second approach, inspired from Villani's work on
hypocoercivity \cite{Vi}, has the advantage of dealing directly with the
evolution equation, which makes it potentially applicable to nonlinear problems
as well (see \cite{BH}), although this possibility has not been widely explored so far. It is
based on rather elementary $H^1$ energy estimates, which however impose some
restrictions concerning the boundary conditions. When the shear velocity $v$ is
a Morse function, it essentially relies on the standard semi-classical estimate
\eqref{eq:spectralgap3}, which is certainly easier to prove than \eqref{EEthin}
in the higher-dimensional case. But if $v$ has degenerate critical points, 
the coefficients $\alpha, \beta, \gamma$ in the functional \eqref{eq:hypofun} 
have to be replaced by $y$-dependent functions, which makes the calculations
more complicated. As a final remark, the estimates given by the hypocoercivity
method are naturally expressed in terms of the $H^1$-type functional $\Phi$,
and logarithmic corrections may appear when translating them into ordinary 
$L^2$ estimates for the evolution equation \eqref{eq:geq}.  It is worth
noticing that the introduction of time-dependent coefficients in $\Phi$ can
remove logarithmic losses \cite{CZHGV22,DelZotto21,LZ21,WZ19}. Moreover,
the robustness of the method allows to treat different dissipative operators,
for instance fractional diffusion \cite{LZ21} or more complicated dissipative
operators \cite{BCZD22}.

\appendix

\section{On $H^1$-thin sets}\label{sec:A}

Assumption~\ref{Hypv} in Section~\ref{sec2} is closely related to a notion of
``thinness'' for subsets of $\RR^d$ which seems quite natural, although we were not
able to locate it in the literature. In this section, we introduce this 
notion and discuss a few elementary properties. 

\begin{definition}\label{def:H1thin}
A set $E \subset \RR^d$ is {\em $H^1$-thin} if there exist positive constants 
$C$ and $\delta_0$ such that, for all $\delta \in (0,\delta_0)$ and all 
$g \in H^1(\RR^d)$, the following inequality holds:         
\begin{equation}\label{eq:H1thin}
  \int_{E_\delta} g(x)^2 \dd x \,\le\, \frac12 \int_{\RR^d} g(x)^2 \dd x
  + C \delta^2 \int_{\RR^d} |\nabla g(x)|^2 \dd x\,,
\end{equation}
where $E_\delta = \bigl\{x \in \RR^d \,;\, \dist(x,E) < \delta\bigr\}$. 
\end{definition}

\begin{remark}\label{rem:poincare}
In particular, if we take $g \in H^1_0(E_\delta)$ in \eqref{eq:H1thin}, 
we see that Poincar\'e's inequality holds in $E_\delta$ with constant 
$\sqrt{2C}\delta$, for all sufficiently small $\delta > 0$. It is not 
clear if this property is sufficient to characterize $H^1$-thin sets. 
\end{remark}

We first observe that the factor $1/2$ in \eqref{eq:H1thin} can be replaced
by an arbitrary real number $\kappa \in (0,1)$ without altering the definition. 

\begin{lemma}\label{lem:kappa}
Fix any $\kappa \in (0,1)$. A set $E \subset \RR^d$ is $H^1$-thin if and only 
if there exist positive constants $C$ and $\delta_0$ such that, for all $\delta 
\in (0,\delta_0)$ and all $g \in H^1(\RR^d)$,         
\begin{equation}\label{eq:H1thin2}
  \int_{E_\delta} g(x)^2 \dd x \,\le\, \kappa\int_{\RR^d} g(x)^2 \dd x
  + C \delta^2 \int_{\RR^d} |\nabla g(x)|^2 \dd x\,.
\end{equation}
\end{lemma}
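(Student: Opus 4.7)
The equivalence reduces to a single ``improvement'' statement for the factor $\kappa$. Indeed, if \eqref{eq:H1thin2} holds for a given $\kappa$ with constant $C$, then trivially it also holds, with the same $C$, for any $\tilde{\kappa} \ge \kappa$. Consequently, the implication \eqref{eq:H1thin2}$\Rightarrow$\eqref{eq:H1thin} is immediate when $\kappa \le 1/2$, and \eqref{eq:H1thin}$\Rightarrow$\eqref{eq:H1thin2} is immediate when $\kappa \ge 1/2$. What must be shown in both remaining cases is that one can \emph{decrease} the coefficient $\kappa$ in \eqref{eq:H1thin2} at the expense of enlarging $C$. Formulated sharply:
\begin{quote}
\emph{If \eqref{eq:H1thin2} holds with parameters $(\kappa_0,C_0,\delta_0)$, then for every $\kappa_1 \in (0,\kappa_0)$ there exists $C_1 = C_1(\kappa_0,\kappa_1,C_0)$ such that \eqref{eq:H1thin2} holds with parameters $(\kappa_1, C_1, \delta_0)$.}
\end{quote}

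The plan is to prove this improvement by a dyadic iteration of \eqref{eq:H1thin2}. Given $g \in H^1(\RR^d)$ and $\delta \in (0,\delta_0)$, introduce the nested thickenings $E_{\delta_j} = \{x : \dist(x,E) < \delta_j\}$ at the dyadic scales $\delta_j = 2^{-j}\delta$, together with a smooth partition of unity $\{\chi_j\}_{j\ge 0}$ adapted to them, so that $\chi_j$ is supported in $E_{\delta_{j-1}} \setminus E_{\delta_{j+1}}$ with $\|\nabla\chi_j\|_\infty = O(2^j/\delta)$ and $\sum_j \chi_j^2 = 1$ on $E_\delta$. For each $j$, apply \eqref{eq:H1thin2} at the scale $\delta_{j-1}$ to the truncated function $\chi_j g$; the gradient error $\|g\,\nabla\chi_j\|_{L^2}^2$ that arises, being supported in the annulus $E_{\delta_{j-1}} \setminus E_{\delta_{j+1}}$, is in turn controlled by \eqref{eq:H1thin2} applied at the next smaller scale. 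After $N$ iterations one obtains a compounded estimate of the shape
\[
\int_{E_\delta} |g(y)|^2 \dd y \,\le\, \kappa_0^N \int_{\RR^d} |g(y)|^2 \dd y \,+\, C_N \,\delta^2 \int_{\RR^d} |\nabla g(y)|^2 \dd y,
\]
for a constant $C_N = C_N(\kappa_0,C_0,N)$. Since $\kappa_0^N \to 0$ geometrically, it suffices to fix $N$ so large that $\kappa_0^N \le \kappa_1$.

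The main obstacle is the book-keeping required to ensure that the factor $\kappa_0$ truly compounds \emph{multiplicatively} across iterations, rather than additively, which would yield a useless bound $N\kappa_0$. This amounts to verifying that at each stage the cross terms generated by the cut-off functions are cleanly routed into either the final gradient term or the next level of the iteration, never contaminating the leading coefficient. The dyadic structure and the geometric decay of the scales $\delta_j$ are essential: they guarantee that the accumulated gradient errors, of order $\|g\,\nabla\chi_j\|_{L^2}^2 = O(4^j\delta^{-2}) \int_{E_{\delta_{j-1}}} |g|^2 \dd y$, form a convergent geometric series when weighed against the thinning contribution of \eqref{eq:H1thin2} at the scale $\delta_j$, so that the final constants $C_N$ remain finite.
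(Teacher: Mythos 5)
Your reduction of the lemma to a ``decrease $\kappa$ at the expense of enlarging $C$'' statement is exactly the right target, and it is also where the paper's proof begins. However, the dyadic iteration you sketch to achieve this has a genuine flaw, rooted in the direction of the thickening and the size of the cut-off gradients.

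The multiplicative gain of a factor $\kappa$ in \eqref{eq:H1thin2} only arises when one compares $\int_{E_{\delta'}}(\chi g)^2$ against $\int_{\RR^d}(\chi g)^2$ for a function $\chi g$ whose support is genuinely \emph{larger} than $E_{\delta'}$. In your scheme, $\chi_j g$ is supported inside $E_{\delta_{j-1}}$, so applying \eqref{eq:H1thin2} at scale $\delta_{j-1}$ yields $\int(\chi_j g)^2 \le \kappa_0\int(\chi_j g)^2 + C_0\delta_{j-1}^2\|\nabla(\chi_j g)\|^2$, i.e.\ $(1-\kappa_0)\|\chi_jg\|^2 \le C_0\delta_{j-1}^2\|\nabla(\chi_j g)\|^2$: a Poincar\'e inequality, with no $\kappa_0$ carried forward. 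Moreover, your dyadic cut-offs have $\|\nabla\chi_j\|_\infty \sim 2^j/\delta$, so the gradient errors obey $\delta_{j-1}^2\|g\,\nabla\chi_j\|^2 \sim \delta_{j-1}^2\cdot 4^j\delta^{-2}\int_{\mathrm{annulus}_j}|g|^2 = O(1)\int_{\mathrm{annulus}_j}|g|^2$; summing these over $j$ produces an $O(1)$ multiple of $\int_{E_{2\delta}}|g|^2$, not a convergent geometric series, so the leading coefficient \emph{is} contaminated and the claimed bound $\kappa_0^N\|g\|^2 + C_N\delta^2\|\nabla g\|^2$ does not emerge.

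The paper's mechanism is the opposite in both respects. It uses a \emph{single} cut-off $\chi$ that equals $1$ on $E_\delta$ and is supported in the \emph{larger} set $E_{(N+1)\delta}$, with a \emph{wide} transition region so that $\|\nabla\chi\|_\infty\le 1/(N\delta)$ — hence the gradient error $\delta^2\|g\,\nabla\chi\|^2 \le N^{-2}\|g\|^2$ is genuinely small. Because $\supp(\chi g)\subset E_{(N+1)\delta}$, one can first apply \eqref{eq:H1thin2} to $\chi g$ at scale $\delta$, bound $\|\chi g\|^2\le \int_{E_{(N+1)\delta}}|g|^2$, and then apply \eqref{eq:H1thin2} once more to $g$ itself at the larger scale $(N+1)\delta$; this yields a coefficient $\kappa^2 + 2C/N^2$, which is $<\kappa$ for $N$ large. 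Iterating this two-scale ``step out, then step back'' argument a finite number of times drives the coefficient below any prescribed $\kappa_1>0$. That outward-stepping structure is precisely what your inward dyadic decomposition cannot replicate.
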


\begin{proof}
Increasing the value of $\kappa$ obviously makes inequality \eqref{eq:H1thin2} 
weaker. To prove Lemma~\ref{lem:kappa}, we have to show that it is possible 
to {\em decrease} the value of $\kappa$ in \eqref{eq:H1thin2}, at the 
expense of modifying the constants $C$ and $\delta_0$. To see that, assume that 
\eqref{eq:H1thin2} holds for some $\kappa \in (0,1)$, and take $g \in H^1(\RR^d)$. 
For any $N \in \NN^*$, we have
\begin{equation}\label{eq:thin1}
  \int_{E_{(N{+}1)\delta}} g(x)^2 \dd x \,\le\, \kappa \int_{\RR^d} g(x)^2 \dd x
  + C (N{+}1)^2\delta^2 \int_{\RR^d} |\nabla g(x)|^2 \dd x\,,
\end{equation}
provided $0 < \delta < \delta_0/(N+1)$. Define $f \in H^1(\RR^d)$ by $f = \chi g$, 
where
\[
  \chi(x) \,=\, \phi\Bigl(\frac{\dist(x,E) - \delta}{N\delta}\Bigr)\,, \qquad
  \phi(t) \,=\, \begin{cases} 1 & \hbox{if } ~t \le 0 \\[-1mm] 1 - t & 
  \hbox{if } ~0 \le t \le 1 \\[-1mm] 0 & \hbox{if } ~t \ge 1\end{cases}\,.
\]
Note that $f$ vanishes outside $E_{(N{+}1)\delta}$, and coincides with
$g$ on $E_\delta$. Applying \eqref{eq:H1thin2} to $f$, we thus find
\begin{equation}\label{eq:thin2}
\begin{split}
  \int_{E_\delta} g(x)^2 \dd x \,&=\, \int_{E_\delta} f(x)^2 \dd x \,\le\, \kappa
  \int_{\RR^d} f(x)^2 \dd x + C \delta^2 \int_{\RR^d} |\nabla f(x)|^2 \dd x \\
  \,&\le\, \kappa \int_{E_{(N{+}1)\delta}} g(x)^2 \dd x + 2C \delta^2 \int_{E_{(N{+}1)\delta}}
  \bigl(|\nabla g|^2 + |\nabla \chi|^2 g^2\bigr)\dd x\,.
\end{split}
\end{equation}
Since $|\nabla \chi| \le 1/(N\delta)$, we deduce from \eqref{eq:thin1}, 
\eqref{eq:thin2} that
\[
  \int_{E_\delta} g(x)^2 \dd x \,\le\, \Bigl(\kappa^2 + \frac{2C}{N^2}\Bigr)
  \int_{\RR^d} g(x)^2 \dd x + C(N)\delta^2 \int_{\RR^d} |\nabla g(x)|^2 \dd x\,,
\]
for some constant $C(N)$ independent of $\delta$. If we take $N$ large enough,
the coefficient in front of the first integral in the right-hand side can be
made smaller than $\kappa' := \kappa(\kappa+1)/2 < \kappa$. Repeating the argument
a finite number of times, we can thus make the coefficient $\kappa$ in \eqref{eq:H1thin2}
as small as we wish.
\end{proof}

It is clear from the definition that, if $E \subset \RR^d$ is $H^1$-thin, then
any subset $F \subset E$ is $H^1$-thin a fortiori. Also, using
Lemma~\ref{lem:kappa}, it is easy to verify that a finite union of $H^1$-thin
sets is $H^1$-thin. Indeed, if $E,F$ are arbitrarily subsets of $\RR^d$ we have,
for all $g \in H^1(\RR^d)$ and all $\delta > 0$, 
\[
   \int_{(E\cup F)_\delta} g^2 \dd x \,=\, \int_{E_\delta\cup F_\delta} g^2 \dd x
   \,\le\, \int_{E_\delta} g^2 \dd x + \int_{F_\delta} g^2 \dd x\,.
\]
If $E,F$ are $H^1$-thin, both integrals in the right-hand side can 
be estimated as in \eqref{eq:H1thin2} with $\kappa = 1/4$, which 
yields inequality \eqref{eq:H1thin} for $E \cup F$. A less immediate
property is stated in the following lemma. 

\begin{lemma}\label{lem:zero}
If $E \subset \RR^d$ is measurable and $H^1$-thin, then $E$ has zero Lebesgue 
measure.
\end{lemma}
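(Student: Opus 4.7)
The plan is to argue by contradiction: if $|E| > 0$, I will use the defining inequality \eqref{eq:H1thin} with a suitable family of test functions to show that every density point of $E$ in fact has Lebesgue density at most $\tfrac12$, contradicting the Lebesgue differentiation theorem. So, assuming $|E| > 0$, I would first invoke the Lebesgue differentiation theorem applied to $\chi_E$ to obtain a point $x_0 \in E$ of density one:
$$\lim_{r \to 0^+}\frac{|E \cap B(x_0,r)|}{|B(x_0,r)|} \,=\, 1\,.$$

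Next, for each $r > 0$ and each $\delta \in (0,\delta_0)$, I would construct $g_{r,\delta} \in C_c^\infty(\RR^d)$ with values in $[0,1]$, equal to $1$ on $B(x_0,r)$, vanishing outside $B(x_0,r+\delta)$, and satisfying $|\nabla g_{r,\delta}| \le C/\delta$ everywhere (standard by mollification of an indicator). The three estimates I need are
\begin{align*}
  \int_{\RR^d} g_{r,\delta}^2 \,&\le\, |B(x_0,r+\delta)|\,,\\
  \int_{\RR^d} |\nabla g_{r,\delta}|^2 \,&\le\, \frac{C^2}{\delta^2}\bigl|B(x_0,r+\delta)\setminus B(x_0,r)\bigr| \,\le\, C'\,r^{d-1}\delta^{-1}\,,\\
  \int_{E_\delta} g_{r,\delta}^2 \,&\ge\, |E \cap B(x_0,r)|\,,
\end{align*}
the last because $E \subset E_\delta$ and $g_{r,\delta} \equiv 1$ on $B(x_0,r)$. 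Plugging these into \eqref{eq:H1thin} then yields
$$|E \cap B(x_0,r)| \,\le\, \tfrac12\,|B(x_0,r+\delta)| + C''\,r^{d-1}\delta\,.$$

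The final step is to fix $r > 0$ and let $\delta \to 0^+$: the right-hand side tends to $\tfrac12\,|B(x_0,r)|$, so $|E \cap B(x_0,r)| \le \tfrac12\,|B(x_0,r)|$ for every $r > 0$, which contradicts the density-one property at $x_0$.

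I do not anticipate a serious obstacle. The two key design choices are (i) to localize the support of $\nabla g_{r,\delta}$ in a shell of width $\delta$ (not width $r$), so that $\delta^2 \int |\nabla g_{r,\delta}|^2 \to 0$ as $\delta \to 0^+$ with $r$ held fixed; and (ii) to exploit the fact that the coefficient $\tfrac12$ in \eqref{eq:H1thin} is strictly less than $1$ --- this is precisely what drives the contradiction with density one. By Lemma~\ref{lem:kappa} any $\kappa < 1$ would work equally well, so the critical feature is not the value $\tfrac12$ but the strict inequality $\kappa < 1$ in the definition of $H^1$-thin.
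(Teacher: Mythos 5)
Your proof is correct, and it takes a genuinely different route from the paper's. The paper mollifies the indicator function directly: it sets $g_\epsilon = \mathbf{1}_E * \chi_\epsilon$, applies \eqref{eq:H1thin} to $g_\epsilon$, sends $\delta \to 0$ with $\epsilon$ fixed to get $\int_E g_\epsilon^2 \le \tfrac12 \int g_\epsilon^2$, and then sends $\epsilon \to 0$ so that both sides converge to $|E|$, yielding $|E| \le |E|/2$ (this requires a preliminary reduction to $E$ bounded, so $|E| < \infty$). Your approach instead invokes the Lebesgue differentiation theorem to locate a density-one point $x_0$, builds shell-supported cutoffs around $x_0$, and extracts the contradiction $|E \cap B(x_0,r)| \le \tfrac12 |B(x_0,r)|$ for every $r$. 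Both proofs hinge on exactly the same two mechanisms you identify — that the gradient term carries a prefactor $\delta^2$ and so vanishes in the $\delta \to 0$ limit, and that $\kappa = \tfrac12 < 1$ strictly — but they package them differently. The paper's version is slightly more self-contained (mollification plus $L^2$ continuity, no appeal to the differentiation theorem) and avoids the need for a density-one point, whereas yours is more geometric and makes the ``thinness'' picture vivid: an $H^1$-thin set cannot fill up more than half of any ball around any of its points. One small remark: the paper's proof needs the reduction to $E$ bounded so that $|E| < \infty$ makes the inequality $|E| \le |E|/2$ conclusive, while your argument does not need that reduction since it is purely local.
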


\begin{proof}
We can assume without loss of generality that $E$ is bounded, so that
$|E| < \infty$. Given any $\epsilon > 0$ we define $g_\epsilon = \1_E * 
\chi_\epsilon$, where $\1_E$ is the characteristic function of $E$ and
$\chi_\epsilon$ is a standard approximation of unity: 
\[
  \chi_\epsilon(x) \,=\, \frac{1}{\epsilon^d}\,\chi\Bigl(\frac{x}{
  \epsilon}\Bigr)\,, \qquad \chi \in C^\infty_c(\RR^d)\,, \qquad
  \int_{\RR^d} \chi(x)\dd x \,=\, 1\,.
\]
Since $E$ is $H^1$-thin and $g_\epsilon \in H^1(\RR^d)$, inequality 
\eqref{eq:H1thin} shows that, for any small $\delta > 0$, 
\[
  \int_E g_\epsilon(x)^2\dd x \,\le\, \int_{E_\delta} g_\epsilon(x)^2\dd x
  \,\le\, \frac12 \int_{\RR^d} g_\epsilon(x)^2 \dd x
  + C \delta^2 \int_{\RR^d} |\nabla g_\epsilon(x)|^2 \dd x\,.   
\]
Therefore, taking the limit $\delta \to 0$, we obtain 
\begin{equation}\label{eq:thin3}
  \int_E g_\epsilon(x)^2\dd x \,\le\, \frac12 \int_{\RR^d} g_\epsilon(x)^2 \dd x\,.
\end{equation}
Now, in the limit $\epsilon \to 0$, we have $g_\epsilon \to \1_E$ in
$L^2(\RR^d)$, so that both integrals in \eqref{eq:thin3} converge to the same
value $|E|$. We thus obtain the inequality $|E| \le |E|/2$, which implies that
$|E| = 0$.
\end{proof}

The following lemma is useful to construct concrete examples of 
$H^1$-thin sets. 

\begin{lemma}\label{lem:lip}
The graph of any Lipschitz function $h : \RR^{d-1} \to \RR$ is $H^1$-thin in
$\RR^d$. 
\end{lemma}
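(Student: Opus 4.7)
The plan is to reduce the graph $E=\{(x',h(x')):x'\in\RR^{d-1}\}$ to a vertical slab and then apply a one-dimensional Sobolev-type bound in the normal direction. The Lipschitz hypothesis controls the geometry of $E_\delta$, and the $1/2$ prefactor in \eqref{eq:H1thin} will fall out of Young's inequality.

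First, I would show that if $L$ is the Lipschitz constant of $h$, then
\[
  E_\delta \,\subset\, S_\delta \,:=\, \bigl\{(x',x_d)\in\RR^{d-1}\times\RR \,;\, |x_d-h(x')|<(1+L)\delta\bigr\}.
\]
Indeed, for $(x',x_d)\in E_\delta$ there is $(y',h(y'))\in E$ with $|x'-y'|<\delta$ and $|x_d-h(y')|<\delta$; the Lipschitz bound $|h(x')-h(y')|\le L\delta$ then gives $|x_d-h(x')|<(1+L)\delta$. So it suffices to estimate the integral of $g^2$ over $S_\delta$.

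Next, I would exploit the product structure of $S_\delta$ by freezing $x'$. For a.e.\ $x'\in\RR^{d-1}$, the slice $f(t):=g(x',t)$ belongs to $H^1(\RR)$, and the elementary identity $f(t)^2=2\int_{-\infty}^{t}f f'$ combined with Cauchy-Schwarz yields the classical embedding $\|f\|_{L^\infty(\RR)}^2\le 2\|f\|_{L^2(\RR)}\|f'\|_{L^2(\RR)}$. Writing $\eta=(1+L)\delta$, this gives
\[
  \int_{|t-h(x')|<\eta}f(t)^2\dd t \,\le\, 2\eta\,\|f\|_{L^\infty(\RR)}^2 \,\le\, 4\eta\,\|f\|_{L^2(\RR)}\|f'\|_{L^2(\RR)} \,\le\, \tfrac12\|f\|_{L^2(\RR)}^2+8\eta^2\|f'\|_{L^2(\RR)}^2,
\]
where the last step is Young's inequality $4\eta ab\le \tfrac12 a^2+8\eta^2 b^2$. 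Crucially, no smallness of $\eta$ is required.

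Finally, I would integrate the slice estimate over $x'\in\RR^{d-1}$, use Fubini, and bound $(\partial_d g)^2\le|\nabla g|^2$ to obtain
\[
  \int_{E_\delta}g^2\dd x \,\le\, \int_{\RR^{d-1}}\!\!\int_{|x_d-h(x')|<(1+L)\delta}\!\!g(x',x_d)^2\dd x_d\dd x' \,\le\, \tfrac12\int_{\RR^d}g^2\dd x+8(1+L)^2\delta^2\int_{\RR^d}|\nabla g|^2\dd x,
\]
which is \eqref{eq:H1thin} with $C=8(1+L)^2$ and $\delta_0=\infty$. There is no real obstacle: the only delicate point is identifying the privileged direction $x_d$ that makes the one-dimensional bound available, and the Lipschitz hypothesis is exactly what guarantees that the transverse width of $E_\delta$ is $\OO(\delta)$ in that direction.
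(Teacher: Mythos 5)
Your proof is correct and takes essentially the same approach as the paper's: reduce $E_\delta$ to a vertical slab of width $\mathcal{O}(\delta)$ above the graph, apply the 1D interpolation inequality $\|f\|_{L^\infty}^2\le C\|f\|_{L^2}\|f'\|_{L^2}$ in the slicing direction, integrate, and conclude by Young's inequality. The only differences are cosmetic: your slab constant $1+L$ is slightly weaker than the paper's $\sqrt{1+L^2}$ (which the paper derives from an explicit lower bound on $\dist\bigl((x',h(x')+z),E\bigr)$ rather than your triangle-inequality argument), and you apply Young per slice whereas the paper keeps the multiplicative bound $2N\delta\,\|g\|_{L^2}\|\nabla g\|_{L^2}$ via Cauchy--Schwarz over $x'$ and applies Young once at the end.
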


\begin{proof}
Let $E = \bigl\{(x,h(x)) \in \RR^d\,;\, x \in \RR^{d-1}\bigr\}$ be the graph of 
$h$, and let $M$ be a Lipschitz constant of $h$. We first observe that, for any 
$\delta > 0$, we have the inclusion
\begin{equation}\label{eq:thin4}
  E_\delta \,\subset\, \Gamma_\delta \,:=\, \bigl\{(x,y) \in \RR^d
  \,;\, x \in \RR^{d-1}\,,~|y - h(x)| < N\delta\bigr\}\,,
\end{equation}
where $N = (1+M^2)^{1/2}$. Indeed, for all $x_1,x_2 \in \RR^{d-1}$ and 
all $z \in \RR$, we have
\begin{align*}
  \big|(x_1,h(x_1)+z) - (x_2,h(x_2))\big|^2 \,&=\, |x_1-x_2|^2 + |z + 
  h(x_1) - h(x_2)|^2 \\
  \,&\ge\, |x_1-x_2|^2 + \bigl(|z| - M|x_1-x_2|\bigr)_+^2 \,\ge\, 
  \frac{z^2}{1+M^2}\,,
\end{align*}
where the last inequality is obvious if $|z| \le M|x_1-x_2|$, and can be obtained
by minimizing the function $a \mapsto a^2 + (|z|-Ma)^2$ in the converse case. 
Taking the infimum over $x_2 \in \RR^{d-1}$, we obtain the estimate
\[
  \dist\Bigl((x_1,h(x_1)+z)\,,\,E\Bigr) \,\ge\, \frac{|z|}{\sqrt{1+M^2}}\,, 
  \qquad \forall\,x_1 \in \RR^{d-1}\,, \quad \forall\,z \in \RR\,,
\]
which in turn implies \eqref{eq:thin4}. Now, if $g \in C^1_c(\RR^d)$, we have for 
any $x \in \RR^{d-1}$:
\[
  \int_{h(x)-N\delta}^{h(x)+N\delta} g(x,y)^2\dd y \,\le\, 2N\delta \,\sup_{y \in \RR}
  g(x,y)^2 \,\le\, 2N\delta\,\biggl(\int_\RR g(x,y)^2 \dd y\biggr)^{1/2} 
  \biggl(\int_\RR\partial_y g(x,y)^2 \dd y\biggr)^{1/2}\,,
\]
where we used the bound $\|f\|_{L^\infty}^2 \le \|f\|_{L^2}\|f'\|_{L^2}$ which 
holds for all $f \in H^1(\RR)$. Integrating both sides over $x \in \RR^{d-1}$ 
and using Schwarz's inequality together with \eqref{eq:thin4}, we arrive at
\[
  \int_{E_\delta} g(x)^2 \dd x\dd y \,\le\, \int_{\Gamma_\delta} g(x)^2 \dd x\dd y 
  \,\le\, 2N\delta\,\|g\|_{L^2} \|\nabla g\|_{L^2}\,.
\]
By density, this bound remains valid for all $g \in H^1(\RR^d)$, and 
\eqref{eq:H1thin} then follows from Young's inequality. 
\end{proof}

It is clear from Definition~\ref{def:H1thin} that the family of $H^1$-thin
sets is invariant under the action of the Euclidean group in $\RR^d$. It 
is also easy to verify that $H^1$-thin sets are stable under dilations,
although the upper bound $\delta_0$ on the parameter $\delta$ has to be
replaced by $\lambda \delta_0$ if $E$ is replaced by $\lambda E$ for some
$\lambda > 0$. Combining these observations with Lemma~\ref{lem:lip}, we conclude
that any submanifold $S$ of $\RR^d$ of nonzero codimension is $H^1$-thin. 
More generally, any $m$-rectifiable set $E \subset \RR^d$ with $m \le d-1$ 
is $H^1$-thin. 

\section{Geometric lemmas}\label{sec:B}

In this section we collect some basic estimates for levels sets of 
Morse functions near critical points, which are used in Section~\ref{ssec22}. 
We assume henceforth that the space dimension $d$ is at least $2$. 
Our starting point is:

\begin{lemma}\label{lem:annulus}
For all $g \in H^1(\RR^d)$ and all $R_2 \ge R_1 \ge 0$, we have
\begin{equation}\label{eq:annulus}
  \int_{R_1 \le |x| \le R_2}g(x)^2\dd x \,\le\, 2 (R_2 - R_1)\,\|g\|_{L^2}
  \,\|\nabla g\|_{L^2}\,.
\end{equation}
\end{lemma}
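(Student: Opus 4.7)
The plan is to pass to polar coordinates $x = r\omega$ with $r \ge 0$ and $\omega \in S^{d-1}$, and to bound $g(r\omega)^2 r^{d-1}$ pointwise in $r$ by a constant (independent of $r$) that is controlled by $\|g\|_{L^2}\|\nabla g\|_{L^2}$. Integrating the bound over $r \in [R_1,R_2]$ will then produce the factor $R_2 - R_1$ for free.

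By density it suffices to prove \eqref{eq:annulus} for $g \in C^1_c(\RR^d)$, so that $g(s\omega)^2 s^{d-1} \to 0$ as $s \to +\infty$ for every $\omega$. The fundamental theorem of calculus then yields
\[
  g(r\omega)^2 r^{d-1} \,=\, -\int_r^\infty \partial_s\bigl(g(s\omega)^2 s^{d-1}\bigr)\dd s
  \,=\, -2\int_r^\infty g(s\omega)\,g_s(s\omega)\,s^{d-1}\dd s
  \,-\, (d-1)\int_r^\infty g(s\omega)^2 s^{d-2}\dd s,
\]
where $g_s(s\omega) := \omega \cdot \nabla g(s\omega)$. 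Because $d \ge 2$, the last term is nonnegative, and discarding it gives the uniform bound
\[
  g(r\omega)^2 r^{d-1} \,\le\, 2\int_0^\infty |g(s\omega)|\,|g_s(s\omega)|\,s^{d-1}\dd s\,,\qquad r \ge 0\,,~\omega \in S^{d-1}\,.
\]

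To finish, I would integrate this inequality with respect to $r \in [R_1,R_2]$ (which introduces the factor $R_2-R_1$) and then with respect to $\omega \in S^{d-1}$, and apply the Cauchy--Schwarz inequality together with Fubini's theorem to the resulting product in $(s,\omega)$. Since $|g_s(s\omega)| \le |\nabla g(s\omega)|$, and since $\dd x = s^{d-1}\dd s \dd \omega$, the Cauchy--Schwarz factors reproduce exactly $\|g\|_{L^2(\RR^d)}$ and $\|\nabla g\|_{L^2(\RR^d)}$, yielding \eqref{eq:annulus} for smooth compactly supported $g$ and hence, by density, for all $g \in H^1(\RR^d)$. There is no real obstacle here; the only minor point to watch is discarding the positive boundary term at $s \to \infty$, which is why the density reduction to $C^1_c$ is convenient.
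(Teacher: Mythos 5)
Your proof is correct, and it reaches the same final inequality $\int_{R_1\le|x|\le R_2}g^2\dd x\le 2(R_2-R_1)\|g\|_{L^2}\|\nabla g\|_{L^2}$ by a genuinely streamlined route. The paper first proves the bound for \emph{radially symmetric} $g\in C^1_c(\RR^d)$ via the same integration by parts along the radial variable (discarding the nonnegative $(d-1)$ term), and then handles general $g$ by expanding in spherical harmonics $g(r\omega)=\sum_n g_n(r)Y_n(\omega)$, applying the radial bound to each coefficient $g_n$, and reassembling with Parseval and Cauchy--Schwarz over the mode index $n$, using $\|\nabla g\|^2\ge\sum_n\|g_n'\|^2$. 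You instead apply the radial integration by parts directly along each ray $\{r\omega:r>0\}$, obtain the pointwise-in-$(r,\omega)$ bound
\[
g(r\omega)^2 r^{d-1}\,\le\,2\int_0^\infty |g(s\omega)|\,|\omega\cdot\nabla g(s\omega)|\,s^{d-1}\dd s\,,
\]
integrate in $r$ over $[R_1,R_2]$ to produce the factor $R_2-R_1$, and finish with a single Cauchy--Schwarz over the product measure $s^{d-1}\dd s\,\dd\omega$ combined with $|\omega\cdot\nabla g|\le|\nabla g|$. This avoids the spherical harmonics decomposition entirely, which makes the argument shorter and more self-contained; the paper's mode-by-mode version would be more convenient only if one wanted to track the angular dependence separately, which is not needed here. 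Both proofs exploit the same essential mechanism (only the radial derivative enters), the constant $2$ is the same, and the density reduction to $C^1_c(\RR^d)$ that you use to justify the vanishing boundary term at infinity is the same step the paper takes.
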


\begin{proof}
We first prove \eqref{eq:annulus} in the particular case where $g \in C^1_c(\RR^d)$ 
and $g$ is {\em radially symmetric}. Under those assumptions, we can integrate
by parts and obtain, for any $r > 0$, 
\[
  -\int_r^\infty 2 g(s) g'(s)s^{d-1}\dd s \,=\, g(r)^2 r^{d-1} + (d{-}1) 
  \int_r^\infty g(s)^2 s^{d-2}\dd s\,. 
\]
Using Schwarz's inequality, we deduce
\[
  g(r)^2 r^{d-1}  + (d{-}1) \int_r^\infty g(s)^2 s^{d-2}\dd s
  \,\le\, 2 \biggl(\int_r^\infty g(s)^2 s^{d-1}\dd s\biggr)^{1/2}
  \biggl(\int_r^\infty g'(s)^2 s^{d-1}\dd s\biggr)^{1/2}\,. 
\]
In particular, we have 
\[
  A_d\,g(r)^2\,r^{d-1} \,\le\, 2\,\|g\|_{L^2}\, \|\nabla g\|_{L^2}\,, 
  \qquad \forall \,r > 0\,,
\]
where $A_d = 2\pi^{d/2}\Gamma(d/2)^{-1}$ is the area of the unit 
sphere $\SS^{d-1} \subset \RR^d$. Integrating both sides over the interval $[R_1,R_2]$, 
we obtain the desired inequality \eqref{eq:annulus}. 

For a general function $g \in C^1_c(\RR^d)$, we introduce polar coordinates
$x = r\omega$ and use the decomposition
\[
  g(r\omega)\,=\, \sum_{n\in\NN} g_n(r)\,Y_n(\omega)\,, \qquad 
  r \in \RR_+\,, \quad \omega \in \SS^{d-1}\,,
\]
where the spherical harmonics $Y_n(\omega)$ are eigenfunctions of the 
Laplace-Beltrami operator on $\SS^{d-1}$, and are normalized so that the family 
$(Y_n)_{n\in\NN}$ is an orthonormal basis of $L^2(\SS^{d-1})$. Using 
Parseval's identity and the previous step, we deduce that
\begin{align*}
  \int_{R_1 \le |x| \le R_2}g(x)^2\dd x \,&=\, \sum_{n\in\NN} \int_{R_1 \le |x| \le R_2}
  g_n(|x|)^2\dd x \,\le\, 2(R_2-R_1)\sum_{n\in\NN} \|g_n\|_{L^2} \|g_n'\|_{L^2} \\*
  \,&\le\, 2(R_2-R_1)\Bigl(\sum_{n\in\NN} \|g_n\|_{L^2}^2\Bigr)^{1/2} 
  \Bigl(\sum_{n\in\NN} \|g'_n\|_{L^2}^2\Bigr)^{1/2} \,\le\, 
   2 (R_2 - R_1)\,\|g\|_{L^2}\,\|\nabla g\|_{L^2}\,,
\end{align*}
because $\|g\|_{L^2}^2 = \sum_{n\in\NN} \|g_n\|_{L^2}^2$ and $\|\nabla g\|_{L^2}^2 \ge 
\sum_{n\in\NN} \|g_n'\|_{L^2}^2$. This proves \eqref{eq:annulus} for all 
$g \in C^1_c(\RR^d)$, and the general case follows by density.
\end{proof}

Now, let $v : \RR^d \to \RR$ be a smooth function and $m \in \NN^*$ a
nonzero integer. In analogy with \eqref{eq:Edef}, \eqref{eq:EEdef}, we define, 
for all $\lambda \in \RR$ and all $\delta > 0$, 
\begin{equation}\label{eq:Edefs}
  E_{\lambda,\delta}^m \,=\, \bigl\{x \in \RR^d\,;\, |v(x)-\lambda| < \delta^m
  \bigl\}\,, \qquad 
  \EE_{\lambda,\delta}^m \,=\, \bigl\{x \in \RR^d\,;\, \dist(x,E_{\lambda,\delta}^m)
  < \delta\bigl\}\,.  
\end{equation}

\begin{lemma}\label{lem:Morse1}
Assume that $v(x) = |x|^2$ for all $x \in \RR^d$. Then for any $\lambda \in \RR$,
any $\delta > 0$, and any $g \in H^1(\RR^d)$, we have
\begin{equation}\label{eq:vmin}
\begin{split}
  \int_{E_{\lambda,\delta}^2} g(x)^2 \dd x \,\le\, 2\sqrt{2}\,\delta\,\|g\|_{L^2}
  \,\|\nabla g\|_{L^2}\,, \quad  \int_{\EE_{\lambda,\delta}^2} g(x)^2 \dd x \,\le\, 
  2(1{+}\sqrt{3})\,\delta\,\|g\|_{L^2}\,\|\nabla g\|_{L^2}\,.
\end{split}
\end{equation}
\end{lemma}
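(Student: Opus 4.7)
The plan is to exploit the spherical symmetry of $v(x) = |x|^2$ to reduce both integrals to spherical annulus integrals, and then invoke Lemma~\ref{lem:annulus}. First, observe that $E_{\lambda,\delta}^2 = \{x \in \RR^d : \lambda - \delta^2 < |x|^2 < \lambda + \delta^2\}$ is a concentric spherical annulus $\{R_1 \le |x| < R_2\}$ (possibly empty or a ball) with $R_1 = \sqrt{\max(\lambda - \delta^2, 0)}$ and $R_2 = \sqrt{\max(\lambda + \delta^2, 0)}$. Its $\delta$-neighborhood $\EE_{\lambda,\delta}^2$ is likewise the concentric annulus with inner radius $R_1' = \max(R_1 - \delta, 0)$ and outer radius $R_2' = R_2 + \delta$ (when nonempty). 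So the task reduces to controlling $R_2 - R_1$ and $R_2' - R_1'$ by multiples of $\delta$.

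For the first inequality, I would split into two cases. If $\lambda \le \delta^2$, then $R_1 = 0$ and $R_2 \le \sqrt{2}\,\delta$, giving $R_2 - R_1 \le \sqrt{2}\,\delta$ directly. If $\lambda > \delta^2$, I would use the factorization
\begin{equation*}
R_2 - R_1 \,=\, \frac{R_2^2 - R_1^2}{R_2 + R_1} \,=\, \frac{2\delta^2}{\sqrt{\lambda+\delta^2} + \sqrt{\lambda - \delta^2}} \,\le\, \frac{2\delta^2}{\sqrt{2}\,\delta} \,=\, \sqrt{2}\,\delta\,,
\end{equation*}
since $R_2 \ge \sqrt{2}\,\delta$ when $\lambda \ge \delta^2$. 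Applying Lemma~\ref{lem:annulus} with these $R_1, R_2$ then yields the first bound in \eqref{eq:vmin}.

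For the second inequality, I would optimize the width $R_2' - R_1'$ of the enlarged annulus. Writing $\mu = \lambda/\delta^2$, the width equals $\delta(\sqrt{\mu+1} - \sqrt{(\mu-1)_+}) + 2\delta$ when $R_1 \ge \delta$ and $\delta(\sqrt{\mu+1} + 1)$ when $R_1 < \delta$. An elementary calculus check (the first expression is decreasing in $\mu$ for $\mu \ge 2$, the second is increasing for $\mu \le 2$) shows that the maximum is attained at $\mu = 2$ and equals $(1+\sqrt{3})\,\delta$. Lemma~\ref{lem:annulus} then gives the second bound.

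The only minor obstacle is the mildly tedious optimization yielding the sharp constant $1+\sqrt{3}$; the geometry itself is transparent thanks to the radial structure of $v$. No difficulties arise from the $H^1$-estimate side since Lemma~\ref{lem:annulus} has already been established for arbitrary concentric annuli.
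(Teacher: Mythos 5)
Your proposal is correct and follows essentially the same route as the paper: reduce both sets to concentric annuli, bound their radial width by $\sqrt{2}\,\delta$ and $(1+\sqrt{3})\,\delta$ respectively, and invoke Lemma~\ref{lem:annulus}. Your reparametrization in $\mu = \lambda/\delta^2$ and explicit monotonicity check is a cleaner way to organize the case analysis than the paper's ``consider three cases,'' but the underlying computation is identical.
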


\begin{proof}
Since $v(x) = |x|^2$, the definition \eqref{eq:Edefs} implies that $E_{\lambda,\delta}^2
\subset \{x \in \RR^d\,;\, R_1 \le |x| < R_2\}$ where $R_1 = (\lambda - 
\delta^2)_+^{1/2}$ and $R_2 = (\lambda + \delta^2)_+^{1/2}$. Considering three
cases according to whether $\lambda \le -\delta^2$, $\lambda \in (-\delta^2,\delta^2)$,  
or $\lambda \ge \delta^2$, it is straightforward to verify that $R_2 - R_1 \le 
\sqrt{2}\,\delta$ in all situations, hence \eqref{eq:annulus} gives the first 
inequality in \eqref{eq:vmin}. The same argument applies to $\EE_{\lambda,\delta}^2$ 
if we define
\[
  R_1 \,=\, \bigl((\lambda - \delta^2)_+^{1/2} - \delta\bigr)_+\,, \qquad
  R_2 \,=\, (\lambda + \delta^2)_+^{1/2} + \delta\,. 
\]
Again considering all possible cases, we find that $R_2 - R_1 \le (1{+}\sqrt{3})
\delta$, and the second inequality in \eqref{eq:vmin} follows in the same way. 
\end{proof}

\begin{lemma}\label{lem:Morse2}
Assume that $d = d_1 + d_2$ with $d_1,d_2 \ge 1$, and that $v(x) = |y|^2 - |z|^2$ for
all $x = (y,z) \in \RR^{d_1} \times \RR^{d_2}$. Then for any $\lambda \in \RR$,
any $\delta > 0$, and any $g \in H^1(\RR^d)$, we have
\begin{equation}\label{eq:vsaddle}
\begin{split}
  \int_{E_{\lambda,\delta}^2} g(x)^2 \dd x \,\le\, 2\sqrt{2}\,\delta\,\|g\|_{L^2}
  \,\|\nabla g\|_{L^2}\,, \quad  \int_{\EE_{\lambda,\delta}^2} g(x)^2 \dd x \,\le\, 
  4(2+\sqrt{2})\,\delta\,\|g\|_{L^2}\,\|\nabla g\|_{L^2}\,.
\end{split}
\end{equation}
\end{lemma}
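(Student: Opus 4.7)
The plan is to follow the slicing strategy of Lemma~\ref{lem:Morse1}: for fixed $z_0\in\RR^{d_2}$, view the $y$-slice of the set in question as a spherical annulus in $\RR^{d_1}$, apply Lemma~\ref{lem:annulus} to $g(\cdot,z_0)$, and integrate in $z_0$ with Cauchy--Schwarz, using $\|\nabla_y g\|_{L^2}\le\|\nabla g\|_{L^2}$.

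For the first inequality this is immediate. The $z_0$-slice of $E^2_{\lambda,\delta}$ is $\bigl\{y\in\RR^{d_1}:\bigl||y|^2-(|z_0|^2+\lambda)\bigr|<\delta^2\bigr\}$, an annulus of exactly the type treated in Lemma~\ref{lem:Morse1} with parameter $\mu=|z_0|^2+\lambda$ in place of $\lambda$, so its radii $R_1(z_0)\le R_2(z_0)$ satisfy $R_2-R_1\le\sqrt{2}\,\delta$. Lemma~\ref{lem:annulus} slice-wise, followed by Fubini and Cauchy--Schwarz, yields the constant $2\sqrt{2}$.

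For the second inequality the naive slicing fails: when $|z_0|$ is large and $\lambda\ll 0$, the $y$-slice of $\EE^2_{\lambda,\delta}$ can contain a ball of radius $\sim\sqrt{|z_0|\,\delta}$, much larger than $\delta$. I would circumvent this by splitting
\[
\EE^2_{\lambda,\delta}=\mathcal A_+\cup\mathcal A_-\,,\qquad
\mathcal A_+:=\EE^2_{\lambda,\delta}\cap\{|y|\ge|z|\}\,,\qquad
\mathcal A_-:=\EE^2_{\lambda,\delta}\cap\{|y|<|z|\}\,,
\]
and estimating each piece by slicing in the variable whose component of the gradient $\nabla v$ is dominant. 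By the discrete symmetry $(y,z,\lambda)\mapsto(z,y,-\lambda)$, which exchanges $\mathcal A_+$ and $\mathcal A_-$ and replaces $v$ by $-v$, it suffices to bound the integral over $\mathcal A_+$ uniformly in $\lambda\in\RR$. Slicing at fixed $z_0$, the $y$-slice of $\mathcal A_+$ is confined to an annulus $\{R_1(z_0)\le|y|<R_2(z_0)\}$ whose outer radius is $R_2(z_0)=\bigl((|z_0|+\delta)^2+\lambda+\delta^2\bigr)_+^{1/2}+\delta$ and whose inner radius $R_1(z_0)$ is the larger of $|z_0|$ (from the defining constraint $|y|\ge|z|$ of $\mathcal A_+$) and $\bigl(\bigl((|z_0|-\delta)_+^{2}+\lambda-\delta^2\bigr)_+^{1/2}-\delta\bigr)_+$ (from the thickened level set). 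A case analysis according to the sign of $\lambda$ and to whether $|z_0|$ is of order $\delta$ or much larger shows that one of the two lower bounds always prevails to give $R_2(z_0)-R_1(z_0)\le(2+\sqrt{2})\,\delta$ uniformly. Lemma~\ref{lem:annulus} then produces $\int_{\mathcal A_+}g^2\le 2(2+\sqrt{2})\,\delta\,\|g\|_{L^2}\|\nabla g\|_{L^2}$, and the twin estimate on $\mathcal A_-$ yields the claimed constant $4(2+\sqrt{2})$.

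The main obstacle is the uniform thickness estimate on $\mathcal A_+$. The delicate region is the ``throat'' of the hyperboloid, where $|z_0|$ is close to $\sqrt{|\lambda|}$ and $|y_0|$ close to $|z_0|$: there the level set is tangent to the cone $\{|y|=|z|\}$, the naive level-set lower bound on $R_1$ degenerates, and the restriction $|y|\ge|z|$ built into $\mathcal A_+$ must be used in its place. Conversely, when $|\lambda|$ is large and $|z_0|$ is small, the constraint $|y|\ge|z|$ contributes nothing and the level-set lower bound is the one that matters. Balancing these two regimes is what pins down the sharp constant $(2+\sqrt{2})$.
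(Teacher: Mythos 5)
Your plan for the first inequality is exactly the paper's argument, and your symmetry $(y,z,\lambda)\mapsto(z,y,-\lambda)$ and the idea of an $\mathcal A_\pm$ split are both reasonable.  However, the crucial claim that $R_2(z_0)-R_1(z_0)\le(2+\sqrt{2})\,\delta$ uniformly is false with your choice of $R_1,R_2$, because both bounds independently worst-case the perturbation in the $y$ and $z$ blocks: $R_2$ uses $|\tilde z|\le|z_0|+\delta$ and then separately $|y|\le|\tilde y|+\delta$, spending the distance budget $\delta$ twice even though the actual constraint is $|y-\tilde y|^2+|z_0-\tilde z|^2<\delta^2$.  Concretely, take $\lambda=4\delta s+\delta^2$ and $|z_0|=s$ with $s\gg\delta$.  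Then $(s-\delta)^2+\lambda-\delta^2=(s+\delta)^2$, so your level-set lower bound equals $s$ exactly and $R_1=s$, while $R_2=\sqrt{s^2+6\delta s+3\delta^2}+\delta\to s+4\delta$ as $s\to\infty$.  Thus $R_2-R_1\to 4\delta$; already for $s=10\delta$ (so $\lambda=41\delta^2$) one gets $R_2-R_1\approx 3.77\,\delta>(2+\sqrt{2})\,\delta\approx 3.41\,\delta$.  The slice is nonempty and lies entirely in $\{|y|>|z|\}$, so neither of your two lower bounds on $R_1$ ``prevails'' here; the throat/non-throat balancing you describe therefore does not pin down $(2+\sqrt{2})$.  (The actual slice thickness at this $z_0$ is only $\approx 2.91\,\delta$, so the lemma is fine — the gap is in your bound, not the statement.)

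The fix is to bound the fluctuation jointly rather than coordinate by coordinate.  Setting $\mu=|\tilde y|^2-|\tilde z|^2\in(\lambda-\delta^2,\lambda+\delta^2)$, one writes, when $\mu\ge 0$,
\begin{equation*}
\bigl||y|-\sqrt{\mu+|z_0|^2}\bigr|\ \le\ \bigl||y|-|\tilde y|\bigr|+\bigl|\sqrt{\mu+|\tilde z|^2}-\sqrt{\mu+|z_0|^2}\bigr|\ \le\ |y-\tilde y|+|z_0-\tilde z|\ <\ \sqrt{2}\,\delta\,,
\end{equation*}
so the $\delta$ budget is spent once and the slice fluctuation is $\sqrt{2}\,\delta$ rather than $2\delta$.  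The paper also decomposes $\EE^2_{\lambda,\delta}$ by the sign of $\mu$ at the nearby level-set point (sets $F_{\lambda,\delta}$ and $G_{\lambda,\delta}$) rather than by the sign of $|y|-|z|$ at the point itself; this keeps each piece aligned with the natural slicing direction and gives $(2+\sqrt{2})\delta$ directly, with no case analysis of the type you propose.  Your $\mathcal A_\pm$ decomposition can also be made to work once the joint bound is in place, but as written your independent radii only give $R_2-R_1<4\delta$, hence a final constant $16$ rather than $4(2+\sqrt{2})$ — still sufficient for the application, but not the stated inequality, and not a ``sharp'' constant.
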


\begin{proof}
We have by definition
\[
  E_{\lambda,\delta}^2 \,=\, \bigl\{(y,z) \in \RR^d\,;\, |z|^2 + \lambda - \delta^2
  < |y|^2 < |z|^2 + \lambda + \delta^2\bigl\}\,. 
\]
It follows that $E_{\lambda,\delta}^2 \subset \{(y,z) ; R_1(z) \le |y| < R_2(z)\}$ 
where
\[
  R_2(z) \,=\, \bigl(|z|^2 + \lambda + \delta^2\bigr)_+^{1/2}\,, \qquad
  R_1(z) \,=\, \bigl(|z|^2 + \lambda - \delta^2\bigr)_+^{1/2}\,.
\]
As before we have $R_2(z) - R_1(z) \le \sqrt{2}\,\delta$. Thus applying 
Lemma~\ref{lem:annulus} and Fubini's theorem, we obtain
\begin{align*}
  \int_{E_{\lambda,\delta}^2} g(y,z)^2\dd y\dd z \,&\le\, 
  \int_{\RR^{d_2}} \biggl(\int_{R_1(z) \le |y| \le R_2(z)} g(y,z)^2\dd y\biggr) \dd z \\
  \,&\le\, 2\sqrt{2}\,\delta 
  \int_{\RR^{d_2}}\biggl(\int_{\RR^{d_1}} g(y,z)^2\dd y\biggr)^{1/2}
  \biggl(\int_{\RR^{d_1}} |\nabla_y g(y,z)|^2\dd y\biggr)^{1/2}\dd z \\
  \,&\le\,  2\sqrt{2}\,\delta\,\|g\|_{L^2}\,\|\nabla g\|_{L^2}\,,
\end{align*}
which is the first inequality in \eqref{eq:vsaddle}. 

The proof of the second inequality is slightly more complicated. If $(y,z) \in 
\EE_{\lambda,\delta}^2$, then by definition there exists $(\tilde y,\tilde z) \in 
E_{\lambda,\delta}^2$ such that $|y-\tilde y|^2 + |z-\tilde z|^2 < \delta^2$. Let
$\mu = |\tilde y|^2 - |\tilde z|^2 \in (\lambda-\delta^2,\lambda+\delta^2)$. 
If $\mu \ge 0$ we have $|\tilde y| = \sqrt{\mu + |\tilde z|^2}$, hence 
\begin{align*}
  \Bigl| |y| - \sqrt{\mu + |z|^2}\Bigr| \,&\le\,\Bigl| |y| - |\tilde y|\Bigr| + 
  \Bigl| |\tilde y| - \sqrt{\mu + |\tilde z|^2}\Bigr| + \Bigl|
  \sqrt{\mu + |\tilde z|^2} - \sqrt{\mu + |z|^2}\Bigr| \\
  \,&\le\, |y - \tilde y| + |z - \tilde z| \,<\, \sqrt{2}\,\delta\,.
\end{align*}
A similar argument shows that $\bigl| |z| - \sqrt{|\mu| + |y|^2}\bigr| <
\sqrt{2}\,\delta$ if $\mu \le 0$. Thus $\EE_{\lambda,\delta}^2 \subset 
F_{\lambda,\delta} \cup G_{\lambda,\delta}$ where
\begin{align*}
  F_{\lambda,\delta} \,&=\, \Bigl\{(y,z)\,;\, \bigl||y| - \sqrt{\mu + |z|^2}\bigr|
  < \sqrt{2}\,\delta \,\hbox{ for some } \mu \ge 0 \hbox{ with } |\mu - \lambda| 
  < \delta^2\Bigr\}\,, \\
  G_{\lambda,\delta} \,&=\, \Bigl\{(y,z)\,;\, \bigl||z| - \sqrt{|\mu| + |y|^2}\bigr|
  < \sqrt{2}\,\delta \,\hbox{ for some } \mu \le 0 \hbox{ with } |\mu - \lambda| 
  < \delta^2\Bigr\}\,.
\end{align*}
We now distinguish three cases. 

\smallskip \noindent{\bf Case 1\thinspace:} $\lambda \ge \delta^2$. Then 
$G_{\lambda,\delta} = \emptyset$ and $F_{\lambda,\delta} \subset \{(y,z); R_1(z) \le |y| 
< R_2(z)\}$ where
\[
  R_1(z) \,=\, \bigl((\lambda - \delta^2 + |z|^2)^{1/2} - \sqrt{2}\,\delta\bigr)_+\,,
  \qquad R_2(z) \,=\, (\lambda + \delta^2 + |z|^2)^{1/2} + \sqrt{2}\,\delta\,.
\]
It is easy to verify that $R_2(z) - R_1(z) \le (2+\sqrt{2})\delta$, hence proceeding
as above we find
\[
  \int_{\EE_{\lambda,\delta}^2} g(x)^2 \dd x \,\le\, \int_{F_{\lambda,\delta}} g(x)^2 \dd x 
  \,\le\, 2(2 +\sqrt{2})\,\delta\,\|g\|_{L^2}\,\|\nabla g\|_{L^2}\,.
\]

\smallskip \noindent{\bf Case 2\thinspace:} $\lambda \le -\delta^2$. Then 
$F_{\lambda,\delta} = \emptyset$ and a similar argument shows that
\[
  \int_{\EE_{\lambda,\delta}^2} g(x)^2 \dd x \,\le\, \int_{G_{\lambda,\delta}} g(x)^2 \dd x 
  \,\le\, 2(2 +\sqrt{2})\,\delta\, \|g\|_{L^2}\,\|\nabla g\|_{L^2}\,.
\]

\smallskip \noindent{\bf Case 3\thinspace:} $-\delta^2 < \lambda < \delta^2$.
Here both sets $F_{\lambda,\delta}, G_{\lambda,\delta}$ are nonempty, and must be 
considered. We first observe that $F_{\lambda,\delta} \subset  \{(y,z); R_1(z) \le |y| 
< R_2(z)\}$ where 
\[
  R_1(z) \,=\, \bigl((\lambda - \delta^2 + |z|^2)_+^{1/2} - \sqrt{2}\,\delta\bigr)_+\,,
  \qquad R_2(z) \,=\, (\lambda + \delta^2 + |z|^2)^{1/2} + \sqrt{2}\,\delta\,.
\]
One verifies that $R_2(z) - R_1(z) \le (2+\sqrt{2})\,\delta$, and it follows that
\[
  \int_{F_{\lambda,\delta}} g(x)^2 \dd x \,\le\, 2(2+\sqrt{2})\,\delta\,
  \|g\|_{L^2}\,\|\nabla g\|_{L^2}\,.
\]
A similar argument gives the same estimate for the integral over $G_{\lambda,\delta}$, 
and since $\EE_{\lambda,\delta}^2 \subset F_{\lambda,\delta} \cup G_{\lambda,\delta}$ we 
arrive at the second estimate in \eqref{eq:vsaddle} in all cases. 
\end{proof}

\bibliographystyle{abbrv}
\bibliography{TayDispBiblio}

\end{document}